\title[Collective dynamics for the stochastic kinetic Cucker-Smale equation]{Collective stochastic dynamics of the Cucker-Smale ensemble under uncertain communication}
\author[Ha]{Seung-Yeal Ha}
\address[Seung-Yeal Ha]{\newline Department of Mathematical Sciences and Research Institute of Mathematics \newline Seoul National University, Seoul 08826 \newline
and Korea Institute for Advanced Study, Hoegiro 87, Seoul 02455, Republic of Korea}
\email{syha@snu.ac.kr}
\author[Jung]{Jinwook Jung}
\address[Jinwook Jung]{\newline Department of Mathematical Sciences, \newline Seoul National University, Seoul, 08826, Republic of Korea}
\email{warp100@snu.ac.kr}
\author[R\"ockner]{Michael R\"ockner}
\address[Michael R\"ockner]{\newline Fakult\"at f\"ur Mathematik, \newline Universit\"at Bielefeld, Bielefeld 33615, Germany \newline
and Academy of Mathematics and System Sciences, Chinese Academy of Sciences, Beijing, China}
\email{roeckner@math.uni-bielefeld.de}
\newtheorem{theorem}{Theorem}[section]
\newtheorem{lemma}{Lemma}[section]
\newtheorem{corollary}{Corollary}[section]
\newtheorem{proposition}{Proposition}[section]
\newtheorem{remark}{Remark}[section]
\newtheorem{definition}{Definition}[section]
\newcommand{\bbr}{\mathbb R}
\newcommand{\bbe}{\mathbb E}
\newcommand{\bbn} {\mathbb N}
\newcommand{\e}{\varepsilon}
\def\charf {\mbox{{\text 1}\kern-.30em {\text l}}}
\begin{document}

\date{\today}

\subjclass{35Q82, 35Q92, 35R60} \keywords{Cucker-Smale model, emergence, flocking, random communication, stochastic kinetic Cucker-Smale equation}

\thanks{\textbf{Acknowledgment.} The work of S.-Y. Ha was supported by National Research Foundation of Korea(NRF-2017R1A2B2001864), and the work of J. Jung is supported by
the National Research Foundation of Korea (NRF) grant funded by the Korea government (MSIP) : NRF-2016K2A9A2A13003815. The work of M. R\"ockner is supported by the German Research Foundation (DFG) under the project number IRTG2235.}

\begin{abstract}
We study collective dynamics of the Cucker-Smale (C-S) ensemble under random communication. As an effective modeling  of the C-S ensemble with infinite size, we introduce a stochastic kinetic C-S equation with a multiplicative white noise. For the proposed stochastic kinetic model with a multiplicative noise, we present a global existence of strong solutions and their asymptotic flocking dynamics, when initial datum is sufficiently regular, and communication weight function has a positive lower bound.
\end{abstract}

\maketitle \centerline{\date}


\allowdisplaybreaks

\section{Introduction}
\setcounter{equation}{0}
Collective behaviors of self-propelled particles are ubiquitous in many biological systems in our nature, to name a few, flocking of birds, herding of sheep and swarming of fish, etc. Throughout the paper, we will use a terminology ``{\it flocking}" to denote aforementioned coherent collective motions. More precisely, {\it flocking} phenomenon denotes a situation in which self-propelled particles adjust their motions into a self-organized ordered motion using only the limited environmental information based on simple rules \cite{C-S, D-M, M-T1, M-T2, T-T, V-Z}. After Reynolds and Vicsek et al's pioneering works in \cite{Re, V-C-B-C-S}, several mechanical models were introduced in literature \cite{ C-F-T-V, C-H-L, C-D1, C-D2,  L-P-L-S, M-T1, P-L-S-G-P, P-E-G, Tad, V-Z} to model such coherent collective motions. Among others, our main interest lies on the mean-field kinetic model, namely ``kinetic C-S model \cite{H-K-Z, H-Liu, H-T}". Let $f := f(t,x,v)$ be the one-particle distribution function for the C-S ensemble at position $x$ with microscopic velocity $v$ at time $t$. Then, the dynamics of $f$ is governed by the kinetic C-S equation:
\begin{align}
\begin{aligned}\label{A-1}
&\partial_t f + v \cdot \nabla_x f + \nabla_v \cdot ({\tilde F}_a [f] f) = 0, \quad (t, x,v)\in \bbr_+ \times \bbr^{2d},\\
&F_a [f](t,x,v) := \int_{\bbr^{2d}} \phi(x_* - x) (v_* - v) f(t,x_*,v_*) dv_* dx_*.
\end{aligned}
\end{align}
Here $F_a[f]$ is a non-local operator measuring the attractive interactions between particles, and $\phi$ is a communication weight function which is nonnegative and radially symmetric:
\[\phi(x) = \bar{\phi}(|x|) \ge 0, \quad \forall x \in \bbr^d, \]
where $\bar{\phi} : [0,\infty) \to \bbr_+$ is Lipschitz continuous, bounded and monotonically decreasing: 
\begin{align*}
\begin{aligned}
& 0 \le \bar{\phi}(r) \le \bar{\phi}(0)=: \phi_M, \quad (\bar{\phi}(r)-\bar{\phi}(s))(r-s)\le 0,~~\forall~~r, s \in [0,\infty), \\
& \mbox{and}\quad [\phi]_{Lip}:= \sup_{r \neq s}\frac{|\bar{\phi}(r) - \bar{\phi}(s)|}{|r-s|}<\infty. 
\end{aligned}
\end{align*}
In the last decade, the mean-field equation \eqref{A-1} and its variants have been extensively studied from various perspectives, e.g., well-posedness and emergent dynamics \cite{C-F-R-T, H-T}, Fokker-Planck perturbation \cite{H-J-N-X-Z}, local sensitivity analysis \cite{H-J-J}, etc. For more detailed discussion, we refer to a recent survey article \cite{C-H-L}. In this paper, we are interested in the quantitative effects on the flocking dynamics of \eqref{A-1} due to the uncertain communication weights. Recently, the local sensitivity analysis for \eqref{A-1} has been discussed in an abstract and general framework in \cite{H-J-J} and some quantitative pathwise estimates for the variations in $f$ and its derivatives in random space were studied. However, authors  in \cite{H-J-J} could not provide interesting probabilistic estimates in relation with the emergent dynamics (see \cite{A-P-Z, C-P-Z, C-M, H-J, L-W} for a related local sensitivity analysis in uncertainty quantification (UQ)).  Thus, our primary goal of this paper is to address some probability estimate for \eqref{A-1} with uncertain communication. 

To fix the idea, we incorporate a stochastic noise into the communication weight, i.e. $\phi \to \phi + \sigma \circ \dot{W}_t$, where ${\dot W}_t$ is a one-dimensional white noise on the probability space $(\Omega, \mathcal{F}, \mathbb{P})$, $\sigma$ denotes the strength of the noise and $\circ$ denotes the stochastic integral in Stratonovich's sense. Then formally, under the unit mass assumption $\int_{\bbr^{2d}} f(t,x,v) dx dv = 1,$ the non-local operator $F_a[f]$ is replaced by a combination of the deterministic part $F_a[f]$ and stochastic part involving with ${\dot W}_t$:
\begin{equation} \label{A-1-1}
F_a[f] \quad \Longrightarrow \quad F_a[f] + \sigma (v_c-v) \circ \dot{W}_t. 
\end{equation}
Now, we combine \eqref{A-1} and \eqref{A-1-1} to derive the stochastic kinetic C-S equation:
\begin{equation} \label{A-2}
\partial_t f_t + v \cdot \nabla_x f_t + \nabla_v \cdot (F_a[f_t]f_t)  = \sigma \nabla_v \cdot ((v-v_c)f_t) \circ \dot{W}_t.
\end{equation}
For a rigorous derivation, we refer to Section 2.1 and here, we use the standard notation for random probability density function $f_t(x,v) := f(t, x, v)$. \newline

At the particle level, the effects of white noise perturbations were discussed in \cite{A-H, E-H-S, H-L-L}. Moreover, a rigorous derivation of equation \eqref{A-2} as a mean-field limit of the C-S systems with multiplicative noises was recently discussed in \cite{C-S2} based on the propagation of chaos in \cite{C-F}, and a mean-field limit of the C-S systems with another type of stochastic perturbations was also addressed in \cite{Ro}.  However, as far as the authors know, the kinetic C-S equation \eqref{A-1-1} perturbed by a multiplicative white noise has only been addressed in measure spaces such as $\mathcal{P}_2$, not in other function spaces (e.g. Sobolev spaces). For other types of stochastic kinetic equations, we refer to \cite{F-G-P,P-S}. In this paper, we address the following two questions: \newline
 \begin{itemize}
\item
(Well-posedness):~Is the stochastic kinetic C-S equation \eqref{A-2} well posed in a suitable function space such as Sobolev spaces? 

\vspace{0.2cm}

\item
(Emergence of flocking): ~If so, does the solution to \eqref{A-2} exhibit asymptotic flocking dynamics?
\end{itemize}

\vspace{0.2cm}

Our main results in this paper provide affirmative answers to the above posed questions. First, we introduce a concept of a strong solution to \eqref{A-2} and then provide a global well-posedness for strong solutions by employing a suitable regularization method and stopping time argument. Second, we provide a stochastic flocking estimate by showing that the expectation of the second velocity moment decays to zero exponentially fast, when the communication weight function $\phi$ has a positive infimum $\phi_m := \inf_{x \in \bbr^d} \phi(x)$ and noise strength $\sigma$ is sufficiently small compared to $\phi_m$. The main difficulty in our analysis arises, when we prove the existence of a solution to the regularized equation. Here, we obtain $W^{m,\infty}$-estimates for the sequence of functions that approximates the regularized equation. Our $W^{m,\infty}$-estimates contain terms with infinite expectation. Hence, even though we can find a limit function of the sequence from the pathwise estimates, it is not certain that the limit function becomes a solution to the regularized equation (see Remark 4.4 for detailed discussion). To cope with this problem, we used stopping time argument to get a solution to the regularized equation. \newline

The rest of this paper is organized as follows. In Section 2, we provide a rigorous derivation of equation \eqref{A-2} from the C-S system with a multiplicative noise, and then briefly discuss our main results on the global well-posedness for strong solutions and asymptotic flocking estimates of classical solutions. In Section 3, we provide several a priori estimates for classical solutions to \eqref{A-2}. In Section 4, we show our global well-posedness and emergent dynamics for strong solutions to \eqref{A-2}. Finally, Section 5 is devoted to a brief summary of our main results and discussion on future works.\\

\noindent {\bf Gallery of notation} Throughout this paper, we denote $(\Omega, \mathcal{F}, \mathbb{P})$ by a generic probability space. For $m,k \in \bbn$ and $p\in[1,\infty]$, we write $W^{m,p}(\bbr^k)$ as $m$-th order $L^p$ Sobolev space on $\bbr^k$. For $(x,v) \in \bbr^{2d}$, $\delta_{(x,v)}$ denotes a point mass concentrated at $(x,v)$.\\

For each $p\in[1,\infty)$, we denote $\mathcal{P}_p(\bbr^{2d})$ by
\[\mathcal{P}_p(\bbr^{2d}) := \{ \mu \ : \ \mbox{probability measure on $\bbr^{2d}$ such that } \int_{\bbr^{2d}} |(x,v)|^p d\mu(x,v) <\infty\}, \]
and we write $p$-Wasserstein distance on $\mathcal{P}_p(\bbr^{2d})$ as
\[W_p(\mu, \nu) := \left(\inf_{\gamma\in\prod(\mu,\nu)} \int_{\bbr^{4d}} |(x,v)-(y,w)|^p d\gamma \right)^{1/p},  \]
where $\prod(\mu,\nu)$ denotes the collection of all measures on $\bbr^{4d}$ whose marginals are $\mu$ and $\nu$.

\vspace{0.5cm}

\section{Preliminaries}\label{sec:2}
\setcounter{equation}{0}
In this section, we provide a rigorous derivation of the equation \eqref{A-2} and present our main results on the global existence of strong solutions to \eqref{A-2} and emergent flocking dynamics.

\subsection{Derivation of the SPDE}
In this subsection, following \cite{C-S2}, we present a derivation of \eqref{A-2} from the C-S system perturbed by a multiplicative noise. To be specific, we begin our discussion with the C-S model \cite{C-S}. 

Let $(x_t^i, v_t^i) \in \bbr^d \times \bbr^d$ be the position and velocity of the $i$-th particle at time $t\ge0$, respectively. Then, the ensemble of C-S particles is governed by the following system: 
\begin{align}
\begin{aligned}\label{C-S}
&dx_t^i = v_t^i dt, \quad t >0, \quad 1 \le i \le N,\\
&dv_t^i = F_a[\mu_t^N](x_t^i, v_t^i)dt, \quad \mu_t^N := \frac{1}{N}\sum_{i=1}^N \delta_{(x_t^i, v_t^i)},
\end{aligned}
\end{align}
where the flocking force $F_a$ is given in $\eqref{A-1}_2$. However, in a real world situation, the communications among particles are subject to the neighboring environment, e.g. drag forces from the fluid, abrupt disconnection, etc, which can be regarded as an intrinsic randomness in the model. To reflect these effects in the communication, stochastic noises can be incorporated into the communication weight $\phi$ appearing in system \eqref{C-S}. To address a stochastic perturbation in system \eqref{C-S}, we replace $\phi$ by $\phi + \sigma \circ \dot{W}_t$  and yield the following system of stochastic differential equations:
\begin{align}
\begin{aligned}\label{SC-S}
&dx_t^i = v_t^i dt, \quad t > 0,~~1 \leq i \leq N, \\
&dv_t^i = F_a[\mu_t^N](x_t^i, v_t^i)dt + \sigma ({\bar v}_t -v_t^i)\circ dW_t^i, \quad \bar{v}_t := \frac{1}{N}\sum_{i=1}^N v_t^i.
\end{aligned}
\end{align}

Let us compare \eqref{SC-S} with the model presented in \cite{A-H}, where the authors replaced  $\phi$ in \eqref{C-S} by $\phi + \sigma \dot{W}_t$ to obtain the C-S system with a multiplicative noise in It\^o's sense. However, we adopt the integral in Stratonovich's sense rather than It\^o's sense, since it enables us to use the method of stochastic characteristics once we derive a stochastic partial differential equation from system \eqref{SC-S}. Moreover, it is natural in the following sense: for each $1\le i \le N$, let $W_t^{i,\e}$ be a smooth approximation to the Wiener process $W_t^i$ (e.g. approximation by using a mollifier). Now, we consider the following system of deterministic equations: 
\begin{align}
\begin{aligned}\label{SC-S_reg}
&dx^{i,\e}_t = v^{i,\e}_t dt, \quad t > 0,~~1 \leq i \leq N, \\
&dv^{i,\e}_t = F_a[\mu^{N,\e}_t](x^{i,\e}_t, v^{i,\e}_t)dt+ \sigma \left({\bar v^\e}_t -v^{i,\e}_t\right)dW^{i,\e}_t,
\end{aligned}
\end{align}
Then, the Wong-Zakai theorem \cite{S-V, W-Z, W-Z2} implies that the solution to system \eqref{SC-S_reg} converges in probability to the solution to system \eqref{SC-S}. Here, we note that system \eqref{SC-S} is equivalent to the following It\^o equation \cite{Ev}:

\begin{align}
\begin{aligned}\label{SC-S2}
&dx_t^i = v_t^i dt, \\
&dv_t^i = \left[F_a[\mu_t^N](x_t^i, v_t^i)-\frac{1}{2}\sigma^2 ({\bar v}_t - v_t^i)\right]dt + \sigma ({\bar v}_t -v_t^i) dW_t^i.
\end{aligned}
\end{align}
When $W^i$'s are i.i.d Wiener processes, a similar analysis as in \cite{H-J-N-X-Z} yields the mean field limit of system \eqref{SC-S2} as $N\to\infty$, which is the following Fokker-Planck type equation:
\begin{equation}\label{CSFP}
\partial_t f + v \cdot \nabla_x f + \nabla_v \cdot \left[\left(F_a[f] - \frac{1}{2}\sigma^2 (v_c - v)\right)f\right] = \sigma\Delta_v(|v-v_c|^2 f), \quad x,v\in\bbr^d, \quad t>0,
\end{equation}
where $v_c := \int_{\bbr^d \times \bbr^d} vf dxdv$. However, if each $W_t^i$ is identical to a single Wiener process $W$, i.e. $W^i \equiv W_t$, we can use a propagation of chaos result \cite{C-S2} to obtain that the empirical measure $\mu_t^N$ associated with system \eqref{SC-S} converges to a measure-valued solution to \eqref{A-2}. Let us summarize the results on the mean-field limit and asymptotic flocking estimates in \cite{C-S2} as follows.
\begin{theorem}\label{T2.1}
Suppose that $T>0$ and consider a communication weight $\phi$ with $\bar\phi \in \mathcal{C}_b^1(\bbr_+)$, and let $\mu_0, \tilde{\mu}_0 \in \mathcal{P}_2(\bbr^d \times \bbr^d)$ be compactly supported in velocity. Then, the following assertions hold.
\begin{enumerate}
\item
 If $\mu$ and $\tilde{\mu}$ are two measure-valued solutions to \eqref{A-2} with compactly supported initial data $\mu_0$ and $\tilde{\mu}_0$ in velocity,  then
\[W_2(\mu_t, \tilde{\mu}_t) \le CW_2(\mu_0, \tilde{\mu}_0)e^{C(1+W_2(\mu_0, \tilde{\mu}_0)}, \quad \mbox{for a.s. $t \in [0, T]$}, \]
where the constant $C$ depends only on $\phi$, $T$, $\sigma$, $\sup_{t \in [0,T]}|B_t|$, and the support in velocity of $\mu_0$ and $\tilde{\mu}_0$. 

\vspace{0.2cm}

\item
If $\mu_0^N := \frac{1}{N}\sum_{i=1}^N \delta_{(x_0^i, v_0^i)}$ is an initial atomic measure such that 
\[ W_2(\mu_0, \mu_0^N) \to 0 \quad \mbox{as $N \to \infty$}, \]
then the empirical measure $\mu_t^N := \frac{1}{N}\sum_{i=1}^N \delta_{(x_t^i, v_t^i)}$ associated with system \eqref{SC-S} is a measure-valued solution to \eqref{A-2} with initial data $\mu_0^N$. Moreover, it converges almost surely to the measure-valued solution $\mu_t$ corresponding to the initial measure $\mu_0$:
\[\sup_{0\le t \le T}W_2(\mu_t, \mu_t^N) \le  CW_2(\mu_0, \mu_0^N)e^{C(1+W_2(\mu_0, \mu_0^N))} \to 0, \quad \mbox{as } \ N \to \infty, \]
\end{enumerate}
\end{theorem}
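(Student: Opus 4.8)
The plan is to solve \eqref{A-2} pathwise by the method of stochastic characteristics, reducing it to a deterministic transport problem along a random flow, and then to import the classical Dobrushin/Cauchy--Lipschitz theory for the kinetic Cucker--Smale dynamics. The first preparatory step is to observe that the velocity mean is conserved: testing \eqref{A-2} against $v$ and using the radial symmetry of $\phi$ together with the antisymmetry of $(v_*-v)$ gives $\int F_a[f_t]f_t\,dxdv=0$, while $\int v\,\nabla_v\cdot((v-v_c)f_t)\,dxdv=0$ by the unit-mass normalization; hence $v_c:=\int v\,d\mu_t$ is independent of $t$, and the same cancellation ($\sum_i F_a=0$ and $\sum_i(\bar v_t-v_t^i)=0$) shows $\bar v_t\equiv\bar v_0$ for \eqref{SC-S}. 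Freezing $v_c$, the characteristic system of \eqref{A-2} is $dX_t=V_t\,dt$, $dV_t=F_a[\mu_t](X_t,V_t)\,dt+\sigma(v_c-V_t)\circ dW_t$, and the Stratonovich substitution $U_t:=e^{\sigma W_t}(V_t-v_c)$ kills the noise, giving $dU_t=e^{\sigma W_t}F_a[\mu_t](X_t,V_t)\,dt$. So for each fixed Brownian path $(X_t,V_t)$ solves a random ODE with coefficients that are continuous in $t$ and globally Lipschitz in the state on the velocity support, $\mu_t$ is the push-forward of $\mu_0$ by the resulting flow, and the Lipschitz constant of $F_a[\mu_t]$ is controlled by $\phi_M$, $[\phi]_{Lip}$, $\sup_{[0,T]}|W_t|$ and the (time-dependent but, on $[0,T]$, uniformly bounded) diameter of the velocity support.

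For the stability estimate (1), I would take an optimal coupling $\gamma_0\in\prod(\mu_0,\tilde\mu_0)$ for $W_2$, transport it by the two flows driven by the \emph{same} path, so that $W_2(\mu_t,\tilde\mu_t)^2\le\int|Z_t-\tilde Z_t|^2\,d\gamma_0$ with $Z_t=(X_t,V_t)$, and then differentiate in $t$. Splitting $F_a[\mu_t](Z_t)-F_a[\tilde\mu_t](\tilde Z_t)$ into a term that is Lipschitz in $Z_t-\tilde Z_t$ and a term that, via the transported coupling and Cauchy--Schwarz, is bounded by $W_1(\mu_t,\tilde\mu_t)\le W_2(\mu_t,\tilde\mu_t)$, one closes a Gr\"onwall inequality whose rate is built from the constants listed above; the factor $e^{C(1+W_2(\mu_0,\tilde\mu_0))}$ in the statement reflects that the velocity diameter of the coupled flow and the gap $|v_c-\tilde v_c|\le W_1(\mu_0,\tilde\mu_0)\le W_2(\mu_0,\tilde\mu_0)$ feed into that effective rate. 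The care needed here is purely bookkeeping: keeping every constant explicit and, for the next step, uniform in $N$.

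For (2), I would first verify that the empirical measure $\mu_t^N$ from \eqref{SC-S} is a measure-valued solution of \eqref{A-2}: applying the Stratonovich chain rule to $\langle\mu_t^N,\psi\rangle=\frac1N\sum_i\psi(x_t^i,v_t^i)$ for $\psi\in C_c^\infty(\bbr^{2d})$ and substituting \eqref{SC-S}, with $\bar v_t\equiv v_c$, reproduces exactly the weak formulation of \eqref{A-2} with datum $\mu_0^N$. The convergence then follows by applying (1) with $\tilde\mu=\mu^N$: since $W_2(\mu_0,\mu_0^N)\to0$ forces the second moments and velocity supports of $\mu_0^N$ to be controlled uniformly in $N$, the constant $C$ is $N$-independent and $\sup_{0\le t\le T}W_2(\mu_t,\mu_t^N)\le C W_2(\mu_0,\mu_0^N)e^{C(1+W_2(\mu_0,\mu_0^N))}\to0$.

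I expect the main obstacle to be the rigorous construction of the stochastic flow: one must justify that the Stratonovich system above generates, for $\mathbb{P}$-a.e.\ $\omega$, a well-defined Lipschitz flow on $\bbr^{2d}$ transporting $\mu_0$ to $\mu_t$ (this is where the Wong--Zakai approximation behind \eqref{SC-S_reg} and the propagation-of-chaos estimates of \cite{C-F} are genuinely used), together with the a priori bound on the velocity support that is uniform on $[0,T]$ and stable as $N\to\infty$. Once the flow and these support bounds are available, both assertions reduce to the single Gr\"onwall estimate of the second paragraph.
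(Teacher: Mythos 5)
The paper does not actually prove Theorem \ref{T2.1}: it is imported verbatim from \cite{C-S2} (``Let us summarize the results \dots in \cite{C-S2}''), so there is no internal argument to compare yours against. Judged on its own, your outline is essentially the proof strategy of that reference and is consistent with the techniques the paper itself deploys later for classical solutions: conservation of the mean velocity (cf.\ Lemma \ref{L2.3}), the stochastic characteristics \eqref{char}, the Stratonovich substitution that removes the noise (the ordinary chain rule applies, so $U_t=e^{\sigma W_t}(V_t-v_c)$ indeed satisfies $dU_t=e^{\sigma W_t}F_a[\mu_t]\,dt$, which also correctly explains why $C$ depends on $\sup_{[0,T]}|B_t|$), a Dobrushin/Gr\"onwall stability estimate in $W_2$ obtained by transporting an optimal initial coupling along the two flows driven by the same path, and the observation that the empirical measure of the common-noise system \eqref{SC-S} is an exact measure-valued solution of \eqref{A-2}. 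This is the right architecture.

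Two points need repair before the argument is complete. First, you assert that $W_2(\mu_0,\mu_0^N)\to 0$ ``forces the \dots velocity supports of $\mu_0^N$ to be controlled uniformly in $N$.'' It does not: $W_2$-convergence controls second moments but is compatible with atoms escaping to infinity in velocity (mass $N^{-1}$ placed at velocity radius $N^{1/4}$ contributes only $N^{-1/2}$ to $W_2^2$). Since every Lipschitz and Gr\"onwall constant in your scheme is built from the velocity support diameter, you must either add the hypothesis that the $\mu_0^N$ have uniformly bounded velocity supports, or note that the standard empirical approximation places the atoms inside $\mbox{supp}\,\mu_0$, which is what the theorem's dependence of $C$ on ``the support in velocity of $\mu_0$ and $\tilde\mu_0$'' tacitly presupposes. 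Second, the identification of an arbitrary measure-valued solution with the push-forward of its initial datum under the characteristic flow is itself a uniqueness statement for the frozen-coefficient linear transport equation along the random flow; it is standard for Lipschitz coefficients but should be stated and justified rather than assumed, since assertion (1) is quantified over \emph{all} measure-valued solutions.
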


Note that the stability estimate in Wasserstein metric implies the uniqueness of measure-valued solutions in $\mathcal{P}_2(\bbr^d \times \bbr^d)$. 

\begin{theorem}\label{T2.2}
Suppose that the communication weight function $\phi$ satisfies 
\[ 0<\phi_m \le \phi(x) \le \phi_M \quad \mbox{for $x\in\bbr^d$}, \]
 and let $\mu_t$ be a measure-valued solution to \eqref{A-2}. Then we have
\[\bbe[E_0]e^{-2(\psi_M - \sigma^2)t} \le \bbe[E_t] \le \bbe[E_0]e^{-2(\psi_m - \sigma^2)t}, \]
where $E_t$ is defined as
\[E_t := \int_{\bbr^d \times \bbr^d} |\bar{v}_0 - v|^2 \mu_t(dx, dv), \quad \bar{v}_0 := \int_{\bbr^d \times \bbr^d} v\mu_0(dx,dv). \]
\end{theorem}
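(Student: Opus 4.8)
The plan is to test equation \eqref{A-2} against the quadratic weight $\varphi(v):=|v-\bar v_0|^2$, turn this into a scalar Stratonovich equation for $E_t$ whose drift is pinched between $-2\phi_M E_t$ and $-2\phi_m E_t$ by the symmetry of the Cucker--Smale coupling, and then take expectations after passing to It\^o form. This is immediate for classical, rapidly decaying solutions; for the measure-valued statement one argues either by truncation of the test function or via the particle approximation of Theorem~\ref{T2.1}.

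\textbf{Step 1: the mean velocity is a (stochastic) invariant.} First I would test \eqref{A-2} against the linear maps $v\mapsto v_j$, $1\le j\le d$. The transport term drops since $\nabla_x v_j=0$; the interaction term equals $\iint_{\bbr^{4d}}\phi(x_*-x)(v_*-v)_j\,d\mu_t(x_*,v_*)\,d\mu_t(x,v)$ and vanishes by antisymmetry under the exchange $(x,v)\leftrightarrow(x_*,v_*)$ together with the radial symmetry of $\phi$; and the noise term is proportional to $\int_{\bbr^{2d}}(v-v_c)_j\,d\mu_t=0$. Hence $v_c(t):=\int_{\bbr^{2d}}v\,\mu_t(dx,dv)$ satisfies $dv_c=0$ (as a Stratonovich, equivalently It\^o, differential), so $v_c(t)\equiv \bar v_0$ almost surely; in particular $E_t=\int_{\bbr^{2d}}|v-v_c(t)|^2\,d\mu_t$.

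\textbf{Step 2: closed differential inequality for $\bbe[E_t]$.} Testing \eqref{A-2} against $\varphi(v)=|v-\bar v_0|^2$, the $x$-transport term again drops, and after the same symmetrization as in Step 1 the interaction contribution becomes
\[
A_t:=2\int_{\bbr^{2d}}(v-\bar v_0)\cdot F_a[\mu_t]\,d\mu_t
=-\iint_{\bbr^{4d}}\phi(x_*-x)\,|v_*-v|^2\,d\mu_t(x_*,v_*)\,d\mu_t(x,v).
\]
Expanding the square and using Step 1 gives $\iint|v_*-v|^2\,d\mu_t\,d\mu_t=2E_t$, so the bounds $\phi_m\le\phi\le\phi_M$ yield, pathwise in $\omega$, $-2\phi_M E_t\le A_t\le -2\phi_m E_t$. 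The Stratonovich noise term contributes $-2\sigma E_t\circ dW_t$ (one integration by parts in $v$, and $(v-v_c)\cdot\nabla_v\varphi=2|v-\bar v_0|^2$ by Step 1), so $E_t$ solves the scalar Stratonovich SDE $dE_t=A_t\,dt-2\sigma E_t\circ dW_t$. Converting to It\^o produces the correction $+2\sigma^2E_t\,dt$; taking expectations kills the martingale part, and combining with the two-sided bound on $A_t$ gives
\[
-2(\phi_M-\sigma^2)\,\bbe[E_t] \le \frac{d}{dt}\bbe[E_t] \le -2(\phi_m-\sigma^2)\,\bbe[E_t].
\]
Gr\"onwall's inequality applied to each side then gives exactly the asserted double exponential bound (so $\psi_m=\phi_m$, $\psi_M=\phi_M$).

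\textbf{Main obstacle.} The subtle point — and the one I would spend the most care on — is legitimacy of the above for a merely measure-valued solution $\mu_t\in\mathcal P_2(\bbr^{2d})$, since the weak formulation of \eqref{A-2} a priori accepts only bounded test functions whereas $v_j$, $|v-\bar v_0|^2$ and their $v$-gradients are unbounded. I would resolve this in one of two ways: (i) truncate by $\varphi_R(v)=|v-\bar v_0|^2\chi(v/R)$, bound the resulting error terms by a uniform-in-$t$ propagation-of-moments estimate for $\mu_t$ (finiteness of $\int|v|^{2+\delta}\,d\mu_t$, proved by the same energy method on higher moments or read off from the construction of $\mu_t$), and let $R\to\infty$; or (ii) first run the entire argument on the finite particle system \eqref{SC-S}, where it is a genuine finite-dimensional It\^o computation for $E_t^N:=\tfrac1N\sum_{i}|v_t^i-\bar v_0^N|^2$, and then pass to the limit using the almost-sure $W_2$-convergence $\mu_t^N\to\mu_t$ of Theorem~\ref{T2.1}, upgraded to convergence of second velocity moments by a uniform bound on $\bbe\big[\langle\mu_t^N,|v|^{2+\delta}\rangle\big]$. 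The remaining routine points — the It\^o--Stratonovich conversion at the level of the tested equation, and the integrability ensuring $\bbe\big[\int_0^t\sigma E_s\,dW_s\big]=0$ — follow once these moment bounds are in hand.
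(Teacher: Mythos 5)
Your proposal is correct: the paper itself does not prove Theorem \ref{T2.2} but quotes it from \cite{C-S2}, and your computation is exactly the standard second-moment energy argument that the paper carries out for classical solutions in Lemma \ref{L2.3} (conservation of $v_c$, symmetrization of the interaction term to get $-\iint\phi\,|v_*-v|^2\,d\mu_t\,d\mu_t$ pinched between $-2\phi_M E_t$ and $-2\phi_m E_t$, the It\^o correction $+2\sigma^2E_t\,dt$, expectation, Gr\"onwall). Your identification of the real issue for merely measure-valued solutions --- unbounded test functions, handled by truncation with moment propagation or by the particle approximation of Theorem \ref{T2.1} --- is the right one, and either route you sketch closes the argument; note also that $\psi_m,\psi_M$ in the statement are indeed typos for $\phi_m,\phi_M$, as your derivation confirms.
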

\begin{remark}
The results in \cite{C-S2} imply that the equation \eqref{A-2} can be derived as a mean-field limit of the particle system \eqref{SC-S}. Now, our objective is to establish solutions with higher regularity than measure-valued solutions.
\end{remark}

\vspace{0.4cm}

\subsection{Presentation of main results}
In this subsection, we provide our main results on the global well-posedness of \eqref{A-2} and emergent flocking dynamics. First, we consider the Cauchy problem for \eqref{A-2}:
\begin{align}
\begin{aligned} \label{B-0}
& \partial_t f_t + v \cdot \nabla_x f_t + \nabla_v \cdot (F_a[f_t]f_t)  = \sigma \nabla_v \cdot ((v-v_c)f_t) \circ \dot{W}_t, \quad  (t, x,v)\in \bbr_+ \times \bbr^{2d}, \\
& f_0(x,v) = f^{in}(x,v),
\end{aligned}
\end{align}
where the initial datum $f^{in}$ is assumed to be deterministic.  Next, we provide a definition for a strong solution to the Cauchy problem \eqref{B-0} as follows. 
\begin{definition}\label{D2.1}
For a given $T \in (0,\infty]$, $f_t = f_t(x,v)$ is a strong solution to \eqref{B-0} on $[0,T]$ if it satisfies the following relations:
\begin{enumerate}
\item
(Regularity):~For $k \ge 1$, $f_t \in \mathcal{C}([0,T]; W^{k,\infty}(\bbr^{2d}))$ a.s. $\omega \in \Omega$.

\vspace{0.2cm}

\item
(Integral relation):~$f_t$ satisfies the equation \eqref{A-2} in distribution sense: for $\psi \in {\mathcal C}^{\infty}_c([0,T] \times\bbr^{2d})$, 
\begin{align}
\begin{aligned} \label{B-0-1}
\int_{\bbr^{2d}} f_t \psi \ dvdx &= \int_{\bbr^{2d}} f^{in} \psi \ dvdx + \int_0^t \int_{\bbr^{2d}} f_s \left( v \cdot \nabla_x \psi + F_a[f_s] \cdot \nabla_v \psi \right)\ dvdx ds \\
&\hspace{0.2cm} - \sigma \int_0^t \Big( \int_{\bbr^{2d}} [(v-v_c) f_s] \cdot \nabla_v \psi \  dvdx \Big) \circ dW_s, \quad \mbox{a.s.~ $\omega \in \Omega$}.
\end{aligned}
\end{align}
\end{enumerate}
\end{definition}

\begin{remark}
1. We say $f_t$ is a classical solution to \eqref{B-0} if it is a $\mathcal{F}_t$-semimartingale satisfying relation \eqref{B-0} pointwise and the regularity condition $f_t \in L^\infty(\Omega; \mathcal{C}([0,T];\mathcal{C}^{3,\delta}(\bbr^{2d})))$ for some $\delta \in (0,1)$. We require this regularity condition to use It\^o's formula and the relation between It\^o and Stratonovich integration without any restriction.

\noindent 2. As can be seen later, the representation of a classical solution to \eqref{B-0} via the stochastic characteristics shows that $f_t$ can not satisfy the $L^\infty$-boundedness over $\Omega$ due to the exponential Wiener process. To handle this, we would use a suitable stopping time.
\end{remark}

Next, we are ready to provide a framework $(\mathcal{F})$ and main results below: 
\begin{itemize}
\item
($\mathcal{F}1$):~The initial datum $f^{in}$ is nonnegative, compactly supported in $x$ and $v$ and independent of $\omega$.

\vspace{0.2cm}

\item
($\mathcal{F}2$):~For $k \geq 1$, $f^{in}$ and $\phi$ are assumed to be in $W^{k,\infty}(\bbr^{2d})$ and $\mathcal{C}^{\infty}(\bbr^{2d})$, respectively.

\vspace{0.2cm}

\item
($\mathcal{F}3$):~The first two moments of $f^{in}$ are normalized as follows:
\[
\int_{\bbr^{2d}} f^{in}dv dx = 1, \quad \int_{\bbr^{2d}} vf^{in}dvdx = 0.
\]
\end{itemize}

Under the framework $({\mathcal F})$, our main results can be summarized as follows.
\begin{theorem}\label{T2.3}
Let $T \in (0, \infty)$ and assume that $f^{in}$ and $\phi$ satisfies the framework $(\mathcal{F})$. Then, there exists a strong solution $f_t$ to \eqref{B-0} on $[0,T]$ such that
\[ \bbe\|f_t\|_{L^\infty} \le \|f^{in}\|_{L^\infty}\exp\left\{\left(d\phi_M + \frac{(\sigma d)^2}{2}\right) t \right\}, \quad \mathbb{E}[M_2](t)  \leq M_2(0)\exp(2\sigma^2 t), \quad t \in [0, T). \]
Moreover, if a strong solution $f_t$ exists on $(0,\infty)$ and $\phi_m := \inf_{x \in \bbr^N} \phi(x) >\sigma^2 $, then one obtains an asymptotic flocking estimate:
\[ \mathbb{E}[M_2](t) \le M_2(0) \exp(-2(\phi_m - \sigma^2)t), \quad t >0. \]
\end{theorem}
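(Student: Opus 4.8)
\textbf{Proof strategy for Theorem \ref{T2.3}.}

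The plan is to establish the theorem in three stages: (i) solve a regularized equation in which the velocity drift is cut off, obtaining global-in-time strong solutions via stochastic characteristics; (ii) derive the stated a priori estimates for $\bbe\|f_t\|_{L^\infty}$ and $\bbe[M_2](t)$ from the a priori estimates of Section 3 applied to classical solutions; and (iii) pass to the limit in the regularization parameter by a stopping-time argument to remove the cutoff and recover a strong solution of \eqref{B-0}, then read off the asymptotic flocking bound from the last differential inequality. Throughout, the core object is the stochastic characteristic system associated with \eqref{A-2}, namely $dX_t = V_t\,dt$, $dV_t = F_a[f_t](X_t,V_t)\,dt + \sigma(V_t - v_c)\circ dW_t$, whose flow map, by Kunita's theory of stochastic flows, is a diffeomorphism a.s.; composing $f^{in}$ with the inverse flow and multiplying by the Jacobian gives a representation formula $f_t(x,v) = f^{in}\bigl(\Phi_t^{-1}(x,v)\bigr)\,\exp\!\bigl(\sigma d\, W_t + (\text{bounded drift})\bigr)$, from which the $L^\infty$ bound follows after taking expectations (the $\sigma d\,W_t$ exponent contributes $e^{(\sigma d)^2 t/2}$ in expectation, and the $\nabla_v\cdot F_a[f_t]$ term is bounded by $d\phi_M$ since $\int f_t = 1$ is conserved).

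For the second velocity moment, I would first argue that $v_c(t) = \int v f_t\,dv\,dx$ is actually $W_t$-dependent but that, under $(\mathcal{F}3)$, the relevant quantity is $M_2(t) := \int |v|^2 f_t\,dv\,dx$ (or $\int |v - \bar v_0|^2 f_t$), and compute its stochastic differential by testing \eqref{B-0-1} with $\psi = |v|^2$ (formally, justified at the classical level): the deterministic interaction term is dissipative, $\int\int \phi(x_*-x)(v_*-v)\cdot v\, f_t f_t \le 0$ by the standard symmetrization that yields $-\tfrac12\int\int\phi(x_*-x)|v_*-v|^2 f_tf_t$, while the Stratonovich noise term, after conversion to It\^o form, produces the drift $+2\sigma^2 M_2(t)$ and a martingale part with zero expectation. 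This gives $\tfrac{d}{dt}\bbe[M_2] \le 2\sigma^2\,\bbe[M_2]$ in general and $\tfrac{d}{dt}\bbe[M_2] \le -2(\phi_m - \sigma^2)\,\bbe[M_2]$ when $\phi \ge \phi_m > \sigma^2$ (using $\phi(x_*-x)|v_*-v|^2 \ge \phi_m|v_*-v|^2$ and the variance identity $\int\int |v_*-v|^2 f_tf_t = 2M_2$ when the first velocity moment vanishes — one must check the mean velocity stays zero in expectation, which follows from the noise being mean-preserving in Stratonovich form up to the $v_c$ subtraction). Grönwall then yields both moment estimates.

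The main obstacle, flagged by the authors themselves in the introduction and in Remark 4.4, is step (iii): the $W^{m,\infty}$ estimates for the Picard-type iterates solving the regularized equation contain exponential-of-Wiener factors with \emph{infinite} expectation, so pathwise compactness does not immediately upgrade to a genuine solution. The remedy is to introduce, for each $R>0$, the stopping time $\tau_R := \inf\{t : \|f_t\|_{W^{m,\infty}} \ge R \text{ or } |W_t| \ge R\}$, prove existence of the regularized solution on the stochastic interval $[0,\tau_R]$ where all bounds are deterministic and finite, then show $\tau_R \uparrow \infty$ a.s. as $R\to\infty$ using the a priori $\bbe$-bounds from step (ii) (which control the growth rate and prevent finite-time blow-up), and finally remove the velocity cutoff in the drift by noting that on $[0,\tau_R]$ the support of $f_t$ in $v$ grows at most like $e^{C|W_t|}$, so the cutoff is inactive for $R$ large. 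Uniqueness of the strong solution, and hence consistency of the limit, follows from a Gr\"onwall estimate on the difference of two solutions in $L^\infty$ analogous to the stability estimate in Theorem \ref{T2.1}. Once a global strong solution is in hand, the asymptotic flocking estimate is simply the Grönwall consequence of the last differential inequality established in step (ii).
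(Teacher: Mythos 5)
Your overall architecture (stochastic characteristics, the Jacobian representation formula for the $L^\infty$ bound, symmetrization plus Stratonovich-to-It\^o conversion for $M_2$, stopping-time localization to handle bounds with infinite expectation, Gr\"onwall for flocking) is the same as the paper's, and your step (ii) computations are correct: the drift $+2\sigma^2 M_2$ from the noise, the dissipative term $-\int\!\!\int \phi|v_*-v|^2 f f$, and the identity $\int\!\!\int|v_*-v|^2 f f = 2M_2$ under $(\mathcal{F}3)$ all match Lemma \ref{L2.3} and the final expectation estimates. However, there are two concrete gaps. First, your stage (i) never explains how the \emph{nonlinear} regularized equation is actually solved: "obtaining strong solutions via stochastic characteristics" is not a construction, because the characteristic system $dV_t = F_a[f_t](X_t,V_t)\,dt+\cdots$ has a drift depending on the unknown $f_t$ itself. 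The paper resolves this with a Picard-type iteration \eqref{C-1}: each iterate solves a \emph{linear} transport SPDE with the previous iterate frozen in the force, the linear problem is solved by characteristics, and contraction is proved pathwise (Proposition \ref{P3.4}, Corollary \ref{C3.2}). Your proposal also regularizes by cutting off the velocity drift rather than mollifying the initial datum; that is a legitimate alternative in principle, but the paper's choice exploits that compactly supported data propagate compact supports along the flow (Corollary \ref{C3.1}), which makes the linear growth of $F_a$ in $v$ harmless without any cutoff, and the mollification is what delivers the $\mathcal{C}^{3,\delta}$ regularity needed to apply It\^o's formula and the It\^o--Stratonovich conversion at the classical level.

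Second, your justification that $\tau_R\uparrow\infty$ "using the a priori $\bbe$-bounds from step (ii)" does not work: those bounds control only $\bbe\|f_t\|_{L^\infty}$ and $\bbe[M_2]$, not the $W^{m,\infty}$ norm entering your stopping time, and the whole difficulty (Remark \ref{R3.3}) is precisely that the $W^{m,\infty}$ controls have \emph{infinite} expectation. The correct mechanism, and the one the paper uses, is purely pathwise: the dominating processes $\mathcal{A}^{k_*}_t$, $\tilde{\mathcal{D}}_t$, $\mathscr{D}_t$ have continuous sample paths, hence are bounded on the compact interval $[0,T]$ for a.e.\ $\omega$, so $\tau_M\to T$ a.s.\ (note the existence claim is only on a finite horizon $[0,T]$, so you need $\tau_M\uparrow T$, not $\infty$). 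The role of the stopping time is to truncate by $M$ so that the passage to the limit in the stochastic integral can be done in $L^2(\Omega)$ via the It\^o isometry; after the limit one removes the stopping time pathwise and recovers the expectation estimates by Fatou's lemma together with the exponential martingale $\exp(d\sigma W_t - (d\sigma)^2 t/2)$ and H\"older's inequality. With these two repairs your outline coincides with the paper's proof.
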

\begin{proof}
For a proof, we first regularize the initial datum using  the standard mollification and then solve the linearized system for \eqref{B-0} to get a sequence of approximate solutions. Then, we use the stopping time argument to get a strong solution for \eqref{B-0} with the given initial datum. The detailed proof  will be presented in Section \ref{sec:4}.
\end{proof}

\begin{remark}
Note that for $k > 3$, a strong solution $f_t$ to \eqref{B-0} can be shown to satisfy the equation \eqref{B-0} pointwise within our framework.
\end{remark}

\section{A priori estimates for classical solutions}\label{sec:3}
\setcounter{equation}{0}
In this section, we study a priori estimates for classical solutions to \eqref{A-2}. First, we study several equivalent relations to the weak formulation \eqref{B-0-1}, when a strong solution satisfies suitable conditions.

\begin{lemma}\label{L3.1}
Suppose that for every $\psi \in \mathcal{C}_c^\infty(\bbr^{2d})$ and a random process $f_t \in L^\infty(\Omega \times [0,T] \times \bbr^{2d})$, $\int_{\bbr^{2d}} f_t \psi dvdx $ has a continuous $\mathcal{F}_t$-adapted modification, where $\{\mathcal{F}_t\}$ is a family of $\sigma$-field generated by the Wiener process. Then, $f_t$ is a $\mathcal{F}_t$-semimartingale satisfying relation \eqref{B-0-1} if and only if for every $\psi \in \mathcal{C}_c^\infty(\bbr^{2d})$, 
\begin{align}
\begin{aligned} \label{B-0-2}
\int_{\bbr^{2d}} f_t \psi \ dvdx &= \int_{\bbr^{2d}} f^{in} \psi \ dvdx + \int_0^t \int_{\bbr^{2d}} f_s \left( v \cdot \nabla_x \psi + F_a[f_s] \cdot \nabla_v \psi \right)\ dvdx ds \\
&\hspace{0.2cm} - \sigma \int_0^t \Big( \int_{\bbr^{2d}} [(v-v_c) f_s] \cdot \nabla_v \psi \  dvdx \Big) dW_s\\
& \hspace{0.2cm} +\frac{\sigma^2}{2} \int_0^t \int_{\bbr^{2d}} (v-v_c) f_s \cdot \Big[ \nabla_v \big( (v-v_c) \cdot \nabla_v \psi \big) \Big] dvdxds  \quad \mbox{a.s.~ $\omega \in \Omega$}.
\end{aligned}
\end{align}
\end{lemma}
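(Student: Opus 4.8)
The plan is to show that the Stratonovich integral relation \eqref{B-0-1} and the It\^o integral relation \eqref{B-0-2} are equivalent by applying the standard conversion formula between Stratonovich and It\^o stochastic integrals. First I would fix a test function $\psi \in \mathcal{C}_c^\infty(\bbr^{2d})$ and abbreviate
\[ G_s := -\sigma \int_{\bbr^{2d}} [(v - v_c) f_s] \cdot \nabla_v \psi \ dv dx, \]
so that the noise term in \eqref{B-0-1} reads $\int_0^t G_s \circ dW_s$. The conversion formula asserts that $\int_0^t G_s \circ dW_s = \int_0^t G_s \, dW_s + \tfrac{1}{2} \langle G, W \rangle_t$, where $\langle G, W\rangle_t$ is the cross-variation. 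Hence the whole content of the lemma is the identification
\[ \tfrac{1}{2}\langle G, W\rangle_t = \frac{\sigma^2}{2} \int_0^t \int_{\bbr^{2d}} (v - v_c) f_s \cdot \big[ \nabla_v\big( (v-v_c)\cdot\nabla_v\psi\big)\big] dv dx ds. \]

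To compute $\langle G, W\rangle_t$ I need the semimartingale decomposition of $s \mapsto G_s$, which in turn requires the decomposition of $s \mapsto \int_{\bbr^{2d}} f_s \chi \, dv dx$ for the specific test function $\chi = (v - v_c)\cdot\nabla_v\psi$ (a component-wise application of \eqref{B-0-1}, or \eqref{B-0-2}, to $\chi$). A subtlety here is that $v_c = v_c(s)$ is itself a random process depending on $f_s$, so $\chi$ is time-dependent and one must be slightly careful: I would either first establish that $v_c(t)$ is itself a semimartingale (using \eqref{B-0-1} with $\psi$ a truncation of the coordinate functions, together with the compact support / $L^\infty$ bounds to justify the truncation) and apply the It\^o product rule, or observe that only the martingale part of $G_s$ contributes to the cross-variation with $W$, and the martingale part of $\int f_s \chi$ is $-\sigma\int_0^s \big(\int (v-v_c)f_r\cdot\nabla_v\chi \, dvdx\big)dW_r$ plus a correction coming from the $v_c$-dependence. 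Reading off the $dW_s$-coefficient of $G_s$ and multiplying by the $dW_s$-coefficient of $W_s$ (which is $1$), then integrating, yields exactly the claimed drift correction with the factor $\sigma^2/2$, after noting $\nabla_v\big((v-v_c)\cdot\nabla_v\psi\big) = \nabla_v\chi$.

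The main obstacle I anticipate is the bookkeeping around the time-dependence of $v_c$ in the test function $\chi$ and the justification that all the processes involved are genuine semimartingales to which the conversion formula applies — this is why the hypothesis explicitly assumes $f_t \in L^\infty(\Omega\times[0,T]\times\bbr^{2d})$ and that $\int f_t\psi\,dvdx$ has a continuous $\mathcal{F}_t$-adapted modification. The $L^\infty$ bound together with compact-support propagation controls all the velocity moments appearing, so that the integrals defining $G_s$ and its quadratic variation are finite and the stochastic Fubini theorem can be invoked to move the $dv dx$ integration inside the stochastic integral. Once the semimartingale structure is in hand, the equivalence is a direct, essentially algebraic, consequence of the Stratonovich-to-It\^o conversion, so I would present that conversion as the single clean step and relegate the Fubini/semimartingale justifications to a remark or a short paragraph citing the standing regularity assumptions.
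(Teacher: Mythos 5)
Your proposal follows essentially the same route as the paper's proof: convert the Stratonovich integral to It\^o form via $\int_0^t h_s \circ dW_s = \int_0^t h_s\, dW_s + \tfrac12\langle h, W\rangle_t$, and identify the cross-variation by reading off the martingale part of $h_s$ from the weak formulation applied to the test function $(v-v_c)\cdot\nabla_v\psi$. Your additional care about the time-dependence of $v_c$ is a reasonable refinement (the paper glosses over this, implicitly relying on the conservation of the first moment), but it does not change the argument.
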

\begin{proof}
The proof is almost the same as in Lemma 13 from \cite{F-G-P}, but we provide a sketch for a proof for readers' convenience.  Note that the following relation between It\^o and Stratonovich integrals holds:
\[\int_0^t h_s \circ dW_s = \int_0^t h_s dW_s + \frac{1}{2}\langle h, W \rangle_t, \]
where $\langle \cdot, \cdot \rangle$ denotes the joint quadratic variation (see \cite{H.K}).  In our case, $h_s$ corresponds to $\int_{\bbr^{2d}}[(v-v_c) f_s] \cdot \nabla_v \psi dvdx $. Then, to deal with $\langle h, W \rangle_t,$, one needs to specify the stochastic part of $h_s$. Here, if we replace $\psi$ in \eqref{C-2} by $(v-v_c) \cdot \nabla_v \psi$, we can find out that the stochastic part of  $ h_s$ becomes $-\sigma\int_0^t \Big[  \int_{\bbr^{2d}} ((v-v_c)  f_s) \cdot \nabla_v (v \cdot \nabla_v \psi)dvdx \Big]dW_s$. This means
\[\Big\langle\int_{\bbr^{2d}}[(v-v_c) f_{\Large \cdot}] \cdot \nabla_v \psi dvdx , W \Big\rangle_t = -\sigma\int_0^t \int_{\bbr^{2d}} [(v-v_c)f_s] \cdot \nabla_v [(v-v_c) \cdot \nabla_v \psi]dvdx ds, \]

and we may conclude the proof here.
\end{proof}

Once we reformulate relation \eqref{B-0-1} to It\^o form \eqref{B-0-2},  we can show that the solution process $f_t$ satisfies the following pointwise relation under the regularity condition for $f$.

\begin{lemma}\label{L2.2}
Suppose that $f_t \in L^\infty(\Omega; \mathcal{C}([0,T]; \mathcal{C}^2(\bbr^{2d})))$ has a continuous $\mathcal{F}_t$-adapted modification and has a compact support in $x$ and $v$. Then, $f_t$ satisfies relation \eqref{B-0-2} if and only if $f_t$ satisfies the following relation:
\begin{align}
\begin{aligned}\label{B-0-3}
 f_t &(x,v) \\
&= f^{in}(x,v)  - \int_0^t \Big(v \cdot \nabla_x f_s  + \nabla_v \cdot (F_a[f_s]f_s) \Big) ds  + \sigma \int_0^t \Big[\nabla_v \cdot \big( (v-v_c) f_s \big) \Big] dW_s\\
& \quad + \frac{\sigma^2}{2} \int_0^t  \nabla_v \cdot \Big[(v-v_c) \nabla_v \cdot \big((v-v_c) f_s\big) \Big] ds, \quad \mathbb{P}  \otimes dx \otimes dv \mbox{-a.s.}
\end{aligned}
\end{align}
\end{lemma}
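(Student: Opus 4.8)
The statement asserts that for sufficiently regular $f_t$, the distributional It\^o relation \eqref{B-0-2} is equivalent to the pointwise It\^o relation \eqref{B-0-3}. The plan is to prove both implications by testing \eqref{B-0-3} against an arbitrary $\psi \in \mathcal{C}_c^\infty(\bbr^{2d})$, integrating over $\bbr^{2d}$, and using a stochastic Fubini theorem to interchange the spatial integral $\int_{\bbr^{2d}} dv\,dx$ with the time integrals $\int_0^t ds$ and the It\^o integral $\int_0^t dW_s$; the regularity hypothesis $f_t \in L^\infty(\Omega;\mathcal{C}([0,T];\mathcal{C}^2(\bbr^{2d})))$ together with the compact support in $(x,v)$ guarantees that all the integrands are bounded and that the second-moment-type bounds required for the stochastic Fubini theorem (e.g.\ the version in \cite{H.K} or a direct verification via boundedness and compact support) are satisfied. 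For the nontrivial direction, one also needs a standard density/approximation argument: if \eqref{B-0-2} holds for all test functions, one argues that the resulting identity forces the pointwise relation $\mathbb{P}\otimes dx\otimes dv$-a.s., since $\mathcal{C}_c^\infty$ is dense and both sides of \eqref{B-0-3} lie in $L^\infty(\Omega\times[0,T]\times\bbr^{2d})$.

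First I would treat the direction ``\eqref{B-0-3} $\Rightarrow$ \eqref{B-0-2}''. Starting from \eqref{B-0-3}, multiply by $\psi(x,v)$ and integrate in $(x,v)$. For the deterministic drift terms, I move the $(x,v)$-integral inside $\int_0^t\cdots ds$ by ordinary Fubini (legitimate by boundedness and compact support), then integrate by parts in $x$ and $v$: the term $-\int_{\bbr^{2d}} (v\cdot\nabla_x f_s)\psi\,dvdx$ becomes $\int_{\bbr^{2d}} f_s\, v\cdot\nabla_x\psi\,dvdx$, and $-\int_{\bbr^{2d}}\nabla_v\cdot(F_a[f_s]f_s)\psi\,dvdx$ becomes $\int_{\bbr^{2d}} f_s\,F_a[f_s]\cdot\nabla_v\psi\,dvdx$. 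For the $\sigma^2/2$ It\^o-correction term, two integrations by parts in $v$ turn $\int_{\bbr^{2d}}\nabla_v\cdot[(v-v_c)\nabla_v\cdot((v-v_c)f_s)]\psi\,dvdx$ into $\int_{\bbr^{2d}}(v-v_c)f_s\cdot[\nabla_v((v-v_c)\cdot\nabla_v\psi)]\,dvdx$, which is exactly the last line of \eqref{B-0-2}. For the stochastic term $\sigma\int_0^t[\nabla_v\cdot((v-v_c)f_s)]\,dW_s$, I apply the stochastic Fubini theorem to interchange $\int_{\bbr^{2d}}(\cdot)\psi\,dvdx$ with $\int_0^t(\cdot)\,dW_s$, and then integrate by parts in $v$ inside the integrand to obtain $-\sigma\int_0^t(\int_{\bbr^{2d}}[(v-v_c)f_s]\cdot\nabla_v\psi\,dvdx)\,dW_s$. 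Collecting all terms gives \eqref{B-0-2}.

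For the converse, I would note that since $f_t$, $v\cdot\nabla_x f_s + \nabla_v\cdot(F_a[f_s]f_s)$, $\nabla_v\cdot((v-v_c)f_s)$, and $\nabla_v\cdot[(v-v_c)\nabla_v\cdot((v-v_c)f_s)]$ are all in $L^\infty(\Omega\times[0,T]\times\bbr^{2d})$ with compact support, the right-hand side of \eqref{B-0-3} defines a well-defined element of that space; call it $g_t(x,v)$. Running the computation of the previous paragraph in reverse shows that $\int_{\bbr^{2d}} g_t\psi\,dvdx$ equals the right-hand side of \eqref{B-0-2} for every $\psi$, and \eqref{B-0-2} itself equals $\int_{\bbr^{2d}} f_t\psi\,dvdx$; hence $\int_{\bbr^{2d}}(f_t-g_t)\psi\,dvdx = 0$ a.s.\ for every $\psi\in\mathcal{C}_c^\infty(\bbr^{2d})$. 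By choosing a countable dense family of test functions and using the a.s.\ continuity in $t$, one concludes $f_t = g_t$ for $\mathbb{P}\otimes dx\otimes dv$-a.e.\ $(\omega,t,x,v)$ (a null set exception may appear in $t$, which is removed by continuity), which is precisely \eqref{B-0-3}.

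The main obstacle is the rigorous justification of the stochastic Fubini theorem for interchanging $\int_{\bbr^{2d}}(\cdot)\psi\,dvdx$ and $\int_0^t(\cdot)\,dW_s$: one must check the integrability condition $\mathbb{E}\int_0^t\int_{\bbr^{2d}}|\nabla_v\cdot((v-v_c)f_s)(x,v)|^2\,|\psi(x,v)|\,dvdx\,ds < \infty$ (or the appropriate norm version), which follows from the hypotheses $f_t \in L^\infty(\Omega;\mathcal{C}([0,T];\mathcal{C}^2))$, compact support, and $v_c$ being bounded on $[0,T]$ — but this needs to be stated carefully. Everything else is routine integration by parts and a density argument; since the claim parallels Lemma 13 and the reformulation step in \cite{F-G-P}, I would cite that and present the computation in condensed form rather than in full detail.
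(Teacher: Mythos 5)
Your proposal is correct, and your direction ``\eqref{B-0-3} $\Rightarrow$ \eqref{B-0-2}'' (deterministic Fubini, integration by parts, stochastic Fubini for the martingale term) is exactly the paper's argument for that implication. For the nontrivial converse you take a genuinely different route. The paper fixes a point $(x^*,v^*)$, mollifies the Dirac mass there by test functions $\rho_i$ drawn (up to a small error) from a countable dense family $\{\psi_i\}\subset\mathcal{D}(\bbr^{2d})$, and passes to the limit in the tested identity: the deterministic terms converge pointwise by continuity of $\mathscr{L}[f_t]$, while the stochastic term converges in $L^2(\Omega)$ via the It\^o isometry, yielding the pointwise relation at $(x^*,v^*)$ outside a null set depending on $(x^*,v^*)$; Fubini in $(x,v)$ then gives the $\mathbb{P}\otimes dx\otimes dv$ statement. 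You instead define the right-hand side of \eqref{B-0-3} as a process $g_t$, show $\int (f_t-g_t)\psi\,dv\,dx=0$ a.s.\ for each $\psi$, and invoke the fundamental lemma of the calculus of variations over a countable dense family. Both arguments are sound and both ultimately rest on the same two ingredients (a jointly measurable, a.s.\ locally integrable version of the parameter-dependent stochastic integral $(x,v)\mapsto\int_0^t\mathscr{M}[f_s](x,v)\,dW_s$, and a countable dense set of test functions to collapse the $\psi$-dependent null sets); your duality route is shorter and more standard, whereas the paper's mollification route makes the role of the It\^o isometry in controlling the stochastic term explicit and produces the pointwise identity at every fixed $(x^*,v^*)$ directly. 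The one step you should spell out, beyond the stochastic-Fubini integrability check you already flag, is the existence of the jointly measurable version of $g_t$ before applying the density argument; with that stated, the proof is complete.
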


\begin{proof}
First, we assume that $f$ satisfies \eqref{B-0-2}. Since $f_t$ is smooth and compactly supported, we use Fubini's theorem to show that \eqref{B-0-2} is equivalent to

\begin{align}
\begin{aligned}\label{B-0-4}
\int_{\bbr^{2d}} f_t \psi \ dvdx &= \int_{\bbr^{2d}} f^{in} \psi \ dvdx - \int_0^t \int_{\bbr^{2d}}\Big[v \cdot \nabla_x f_s + \nabla_v \cdot (F_a[f_s]f_s) \Big] \psi \ dvdx ds \\
&\hspace{0.2cm}  + \sigma \int_0^t \Big( \int_{\bbr^{2d}} \nabla_v \cdot [(v-v_c) f_s]  \psi \  dvdx \Big) dW_s\\
& \hspace{0.2cm} +\frac{\sigma^2}{2} \int_0^t \int_{\bbr^{2d}} \nabla_v \cdot \Big[(v-v_c) \nabla_v \cdot \big((v-v_c) f_s\big) \Big]  dvdxds  \quad \mbox{a.s.~ $\omega \in \Omega$}.
\end{aligned}
\end{align}
Note that for each $\psi \in \mathcal{D}(\bbr^{2d})$, it satisfies the relation \eqref{B-0-4}  outside $\mathbb{P}$-zero set depending on the choice of $\psi$. We recall from standard functional analysis that $\mathcal{D}( \bbr^{2d})$ is separable, i.e. there exists $\{\psi_i \}_{i=1}^\infty \subseteq \mathcal{D}(\bbr^{2d})$ which is dense in $\mathcal{D}(\bbr^{2d})$. Here, we choose $\Omega_i \subset \Omega$ such that $\mathbb{P}(\Omega_i)=1$ and \eqref{B-0-1} holds for $f_t$ and $\psi_i$ over $\Omega_i$. Let $\tilde\Omega := \cap_{i=1}^\infty \Omega_i$. Then $\mathbb{P}(\tilde\Omega)=1$ and \eqref{B-0-1} holds for any $\psi_i$ and $f_t$ over $\tilde\Omega$.\newline

\noindent Now, we show $f_t$ satisfies the relation \eqref{B-0-3}. For this, we define functionals $\mathscr{L}_t[f]$ and $\mathscr{M}[f_t]$ as follows:
\begin{align*}
\begin{aligned}
&\mathscr{L} [f_t](x,v) := f_t - f^{in} + \int_0^t   \Big(v \cdot \nabla_x f_s  + \nabla_v \cdot (F_a[f_s]f_s) \Big) ds \\
& \hspace{2.5cm} - \frac{\sigma^2}{2} \int_0^t  \nabla_v \cdot \Big[(v-v_c) \nabla_v \cdot \big((v-v_c) f_s\big) \Big] ds,\\
&\mathscr{M} [f_t]  := \nabla_v \cdot [(v-v_c) f_t].
\end{aligned}
\end{align*}
For a given $(x^*,v^*) \in \bbr^{2d}$, we can choose a sequence $\{\rho_i\} \subset \mathcal{D}(\bbr^{2d})$, using the standard mollifier technique or other tools,  such that for any $i \in \bbn$,

\[ \Big| (\rho_i * \mathscr{L}[f_t])(x^*, v^*)  - \mathscr{L}[f_t](x^*, v^*) \Big| + \int_0^t \Big| (\rho_i * \mathscr{M}[f_s])(x^*, v^*)  - \mathscr{M}[f_s](x^*, v^*) \Big|^2 ds \le \frac{1}{2^{i+1}}, \]
 where the regularity and compact support of $f$ can be used to guarantee the above inequality. We also use the denseness of $\{\psi_i\}$ to obtain $\{\tilde\psi_i\} \subseteq \{\psi_i\}$ which satisfies, for any $i \in \bbn$,
\[  \Big| (\rho_i - \tilde{\psi}_i)*\mathscr{L}_t[f](x^*, v^*) \Big| +  \int_0^t \Big| (\rho_i-\tilde\psi_i)* \mathscr{M}[f_s](x^*, v^*) \Big|^2 ds \le \frac{1}{2^{i+1}}. \]
Thus, we have
\begin{equation}\label{B-0-5}
\Big(\tilde\psi_i * \mathscr{L}_t [f]\Big) (x^*, v^*) \longrightarrow \mathscr{L}_t [f](x^*, v^*).
\end{equation}
Moreover, we use It\^o isometry to get
\begin{equation}\label{B-0-6}
\mathbb{E} \left[  \Big(\int_0^t (\tilde\psi_i * \mathscr{M}[f_s] - \mathscr{M}[f_s]) dW_s \Big)^2 \right] = \mathbb{E} \left[ \int_0^t (\tilde\psi_i * \mathscr{M}[f_s] - \mathscr{M}[f_s])^2 ds \right] \longrightarrow 0.
\end{equation}
Hence, we can obtain the convergence of \eqref{B-0-4} with $\psi = \tilde\psi_i(x^*-x, v^*-v)$  towards \eqref{B-0-3} at $(x^*, v^*)$ as $i \to \infty$, by combining \eqref{B-0-5} and \eqref{B-0-6}. We perform this procedure to obtain that for every $(x^*,v^*) \in \bbr^{2d}$, $f$ satisfies relation \eqref{B-0-3} $\mathbb{P}$-a.s. and this gives
\[ \mathbb{E} \left[\left|\mathcal{L}[f_t] - \int_0^t \mathscr{M}[f_s] dW_s \right|(x,v)\right] = 0, \]
for every $(x,v) \in \bbr^{2d}$. Thus, we use Fubini theorem to get 
\[ \mathbb{E}\left[\int_{\bbr^{2d}}\left|\mathcal{L}[f_t] - \int_0^t \mathscr{M}[f_s] dW_s\right| dvdx \right] = 0. \]
This implies our first assertion. \newline

\noindent Next, we assume that $f$ satisfies \eqref{B-0-3} $\mathbb{P} \otimes dx\otimes dv  $-a.s. Then by (deterministic) Fubini's theorem, the following relation is easily obtained: for every $\psi \in \mathcal{C}_c^\infty(\bbr^{2d})$, 

\begin{align*}
\int_{\bbr^{2d}} f_t \psi \ dvdx &= \int_{\bbr^{2d}} f^{in} \psi \ dvdx + \int_0^t \int_{\bbr^{2d}} f_s \left( v \cdot \nabla_x \psi + F_a[f_s] \cdot \nabla_v \psi \right)\ dvdx ds \\
&\hspace{0.2cm} +\sigma  \int_{\bbr^{2d}} \Big(\int_0^t  \nabla_v \cdot [(v-v_c) f_s]  \psi \ dW_s \Big) dvdx\\
& \hspace{0.2cm} +\frac{\sigma^2}{2} \int_0^t \int_{\bbr^{2d}} (v-v_c) f_s \cdot \Big[ \nabla_v \big( (v-v_c) \cdot \nabla_v \psi \big) \Big] dvdxds  \quad \mbox{a.s.~ $\omega \in \Omega$}.
\end{align*}
Since $f_t$ is in $L^\infty(\Omega; \mathcal{C}([0,T];\mathcal{C}^2(\bbr^{2d})))$ and compactly supported, we have
\[\int_{\bbr^{2d}}  \left(\int_0^t \Big|\nabla_v \cdot[ (v-v_c)f_s] \psi \Big|^2 ds \right)^{1/2} dv dx <\infty, \quad \mbox{a.s. } \ \omega \in \Omega. \]
Then, we can use the stochastic Fubini theorem (see \cite{V} and references therein) and deterministic Fubini's theorem to get 
\begin{align*} 
\begin{aligned}
& \int_{\bbr^{2d}} \Big(\int_0^t  \nabla_v \cdot [(v-v_c) f_s]  \psi \ dW_s \Big) dvdx \\
& \hspace{0.5cm} =  \int_0^t  \Big( \int_{\bbr^{2d}}\nabla_v \cdot [(v-v_c) f_s]  \psi \ dvdx \Big) dW_s = -  \int_0^t   \Big(  \int_{\bbr^{2d}}[(v-v_c) f_s] \cdot \nabla_v  \psi \ dvdx \Big) dW_s.
\end{aligned}
\end{align*}
This implies our desired result.
\end{proof}
\begin{remark} 
1. If a strong solution $f_t$ to \eqref{A-2} satisfies conditions in Lemma \ref{L2.2}, then $f_t$ satisfies the relation \eqref{B-0-3}. \newline
2. If $f_t$ is a classical solution to \eqref{A-2}, we may use Lemma \ref{L2.2} in \cite{Chow} to obtain that the It\^o relation \eqref{B-0-3} is equivalent to \eqref{B-0}.\\
\end{remark}

\subsection{Quantitative estimates for classical solutions} We provide several properties of classical solutions $f$ to \eqref{A-2}. First, we study the propagation of velocity moments along the stochastic flow of \eqref{B-0}$_1$. For a random density function $f_t$, we set velocity moments:
\begin{equation} \label{B-1}
M_0(t) := \int_{\bbr^{2d}} f_t dvdx, \quad M_1(t) := \int_{\bbr^{2d}} v f_t dvdx, \quad 
M_2(t) :=  \int_{\bbr^{2d}} |v|^2 f_t dvdx, \quad t \geq 0.
\end{equation}
Consider the following stochastic characteristics $\varphi_t(x,v) := (X_t(x,v), V_t(x,v))$:
\begin{equation}\label{char}
\begin{cases}
\displaystyle dX_t = V_t dt, \quad t > 0,\\
\displaystyle dV_t = \left(F_a[f_t](X_t, V_t) \right)dt + \sigma (v_c-V_t) \circ dW_t,
\end{cases}
\end{equation}
subject to the initial data:
\[(X_0(x,v), V_0(x,v)) = (x,v). \]
Note that if $f_t$ is compactly supported in $x$ and $v$, and satisfies the regularity condition for classical solutions, the system \eqref{char} has a unique solution and the family $\{\varphi_{s,t}(x,v) := \varphi_t (\varphi^{-1}_s(x,v))\}$, $0\le s \le t \le T$, forms a stochastic flow of smooth diffeomorphisms (we refer to Lemma 4.1 in Chapter 2 of \cite{Chow} for details).   Furthermore, we define the functionals that measure spatial and velocity supports of $f_t$, respectively:
\begin{align*}
&\mathcal{X}(t) := \sup\{ |x| \ : \ f_t(x,v) \ne 0 \quad \mbox{for some}~~v \in \bbr^d\},\\
&\mathcal{V}(t) := \sup\{ |v| \ : \ f_t(x,v) \ne 0 \quad \mbox{for some}~~x \in \bbr^d\}.
\end{align*}

\begin{lemma}\label{L2.3} 
Let $f_t$ be a classical solution to \eqref{B-0} which is compactly supported in $x$ and $v$ and satisfies 
\[ M_0(0) = 1, \quad M_1(0) = 0. \]
Then, we have
\[ M_0(t) = 1, \quad M_1(t) = 0, \quad M_2(t) \le M_{2}(0) \exp\left(-2\int_0^t {\bar\phi}(2\mathcal{X}(s))ds -2\sigma W_t\right), \quad t \geq 0.\]
\end{lemma}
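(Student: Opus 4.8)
The plan is to work directly with the stochastic characteristics \eqref{char} and transport the velocity moments along the flow, using the fact that (for a classical solution) $f_t(\varphi_t(x,v))\,|\det D\varphi_t(x,v)|$ is governed by the divergence structure of the equation. First I would establish the conservation laws $M_0(t)=1$ and $M_1(t)=0$. For $M_0$, I would apply the weak/It\^o formulation \eqref{B-0-3} (equivalently \eqref{B-0-2}) with test function $\psi\equiv 1$ on the support, observing that every term on the right-hand side is a perfect $v$-divergence or $x$-divergence and hence integrates to zero over $\bbr^{2d}$ because $f_t$ is compactly supported; thus $dM_0=0$. For $M_1$, I would take $\psi(x,v)=v$ (again legitimate since $f_t$ has compact support, so one may cut off outside the support). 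The drift term $\int f_s F_a[f_s]\cdot\nabla_v\psi$ becomes $\int f_s F_a[f_s]\,dvdx$, which vanishes by the standard Cucker--Smale symmetrization $\int\!\!\int \phi(x_*-x)(v_*-v)f_s(x_*,v_*)f_s(x,v)=0$ by antisymmetry under $(x,v)\leftrightarrow(x_*,v_*)$. The transport term $\int f_s\, v\cdot\nabla_x\psi$ vanishes since $\nabla_x\psi=0$. For the stochastic term, $\int [(v-v_c)f_s]\cdot\nabla_v\psi\,dvdx=\int (v-v_c)f_s\,dvdx=M_1(s)-v_c M_0(s)=M_1(s)-M_1(s)=0$ using $v_c=M_1/M_0$ and $M_0\equiv1$; the It\^o correction term likewise reduces to $\int (v-v_c)f_s\cdot\nabla_v\big((v-v_c)\cdot\nabla_v\psi\big)=\int (v-v_c)f_s\,dvdx=0$. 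Hence $dM_1=0$, so $M_1(t)=0$ and consequently $v_c(t)=0$ for all $t$.

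Next, for the decay of $M_2$, since $v_c\equiv 0$ the characteristic system simplifies to $dV_t=F_a[f_t](X_t,V_t)\,dt-\sigma V_t\circ dW_t$. I would compute the evolution of $|V_t|^2$ along the flow. In Stratonovich calculus the ordinary chain rule applies, so
\begin{align*}
d|V_t|^2 &= 2V_t\cdot dV_t = 2V_t\cdot F_a[f_t](X_t,V_t)\,dt - 2\sigma |V_t|^2\circ dW_t.
\end{align*}
Rather than track a single trajectory, I would instead differentiate $M_2(t)=\int |v|^2 f_t\,dvdx$ using \eqref{B-0-3} with $\psi=|v|^2$ (cut off outside the support). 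The transport term gives $\int f_s\,v\cdot\nabla_x(|v|^2)=0$. The interaction term yields $\int f_s F_a[f_s]\cdot 2v\,dvdx = 2\int\!\!\int \phi(x_*-x)(v_*-v)\cdot v\, f_s f_s\,(dv_*dx_*)(dvdx)$, which by the usual symmetrization equals $-\int\!\!\int \phi(x_*-x)|v_*-v|^2 f_s f_s \le -\bar\phi(2\mathcal{X}(s))\int\!\!\int |v_*-v|^2 f_s f_s = -2\bar\phi(2\mathcal{X}(s))\,M_2(s)$, where I use $\phi(x_*-x)\ge\bar\phi(2\mathcal{X}(s))$ on the support (so $|x_*-x|\le 2\mathcal{X}(s)$ and $\bar\phi$ is decreasing) together with $\int\!\!\int|v_*-v|^2 f_s f_s = 2M_2 M_0 - 2|M_1|^2 = 2M_2$. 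The stochastic term gives $-\sigma\int [(v-v_c)f_s]\cdot\nabla_v(|v|^2)\,dvdx\, \circ dW_s = -2\sigma\int v f_s\cdot v\,dvdx\,\circ dW_s = -2\sigma M_2(s)\circ dW_s$ (using $v_c=0$). Collecting,
\begin{align*}
dM_2(t) &\le -2\bar\phi(2\mathcal{X}(t))\,M_2(t)\,dt - 2\sigma M_2(t)\circ dW_t.
\end{align*}

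The remaining step is to integrate this linear Stratonovich inequality. Since the noise coefficient $-2\sigma M_2$ is linear in $M_2$, the Stratonovich equation $dZ = -2\sigma Z\circ dW_t$ has the explicit solution $Z_t = Z_0 e^{-2\sigma W_t}$ with no It\^o--Stratonovich correction, so the integrating factor $e^{2\sigma W_t}$ works exactly as in the deterministic case. Setting $Y_t := M_2(t)\exp\big(2\int_0^t\bar\phi(2\mathcal{X}(s))ds + 2\sigma W_t\big)$ and applying the Stratonovich product rule, $dY_t \le 0$, whence $Y_t\le Y_0=M_2(0)$, which is exactly the claimed bound $M_2(t)\le M_2(0)\exp\big(-2\int_0^t\bar\phi(2\mathcal{X}(s))\,ds - 2\sigma W_t\big)$. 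The main obstacle is a careful justification of the differentiation-of-moments step: the test functions $|v|^2$ and $v$ are not compactly supported, so one must truncate them with a cutoff equal to one on a neighborhood of the (compact, but $\omega$-dependent) support of $f_t$, and verify that the support does not escape to infinity in finite time — which follows because $\mathcal{V}(t)$ and hence $\mathcal{X}(t)$ stay finite a.s. by the well-posedness of \eqref{char} as a stochastic flow of diffeomorphisms (Lemma 4.1, Ch.~2 of \cite{Chow}). A secondary technical point is ensuring the symmetrization identities hold $\mathbb{P}$-a.s.\ simultaneously for all $s$, which is immediate from the continuity in $s$ of all quantities involved.
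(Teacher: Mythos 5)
Your proposal is correct and follows essentially the same route as the paper: conservation of $M_0$ and $M_1$ via the divergence structure and (stochastic) Fubini, the standard Cucker--Smale symmetrization combined with $\phi(x_*-x)\ge\bar\phi(2\mathcal{X}(s))$ and $\int\!\!\int|v-v_*|^2f_sf_s=2M_2(s)$ for the dissipation, and then integration of the resulting linear stochastic differential inequality. The only (cosmetic) difference is that you close the argument in Stratonovich form with an integrating factor, whereas the paper converts to It\^o form (picking up the $2\sigma^2M_2\,dt$ correction) and invokes the explicit solution formula and comparison principle of Lemmas \ref{LA-1}--\ref{LA-2}; both yield the same exponent $-2\int_0^t\bar\phi(2\mathcal{X}(s))\,ds-2\sigma W_t$.
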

\begin{proof}  
\noindent $\bullet$~(Conservation of mass):  It follows from Remark 3.1 that
\begin{align}
\begin{aligned} \label{B-1-0}
 f_t(x,v) &= f^{in}(x,v) \hspace{-0.05cm} - \hspace{-0.1cm}\int_0^t \hspace{-0.1cm} \Big(v \cdot \nabla_x f_s  + \nabla_v \cdot (F_a[f_s]f_s) \Big) ds  + \sigma \hspace{-0.1cm}\int_0^t \hspace{-0.1cm} \Big(\nabla_v \cdot( (v-v_c) f_s) \Big) dW_s\\
& \quad +  \frac{\sigma^2}{2} \int_0^t  \nabla_v \cdot \Big[(v-v_c) \nabla_v \cdot \big((v-v_c) f_s\big) \Big] ds .
 \end{aligned}
\end{align}
We integrate \eqref{B-1-0} over $(x,v) \in \bbr^{2d}$ to get
\begin{align*}
\begin{aligned}
&\int_{\bbr^{2d}} f_t(x,v) dv dx \\
& \hspace{0.5cm} =  \int_{\bbr^{2d}}  f^{in}(x,v) dv dx -\int_{\bbr^{2d}}  \int_0^t \Big(v \cdot \nabla_x f_s  + \nabla_v \cdot (F_a[f_s]f_s) \Big) ds  dv dx \\
& \hspace{0.7cm} + \sigma  \int_{\bbr^{2d}}\bigg[ \int_0^t \Big(\nabla_v \cdot( (v-v_c) f_t) \Big) dW_s\bigg] dv dx \\
&\hspace{0.7cm} +  \frac{\sigma^2}{2}\int_{\bbr^{2d}} \int_0^t  \nabla_v \cdot \Big[(v-v_c) \nabla_v \cdot \big((v-v_c) f_s\big) \Big] ds dvdx\\
& \hspace{0.5cm} =:  \int_{\bbr^{2d}}  f^{in}(x,v) dv dx + {\mathcal I}_{11} + {\mathcal I}_{12}+\mathcal{I}_{13} .
\end{aligned}
\end{align*}
Next, we show that the terms ${\mathcal I}_{1i}$ are zero using deterministic and stochastic Fubini's theorems.  \newline

\noindent $\diamond$ (Estimate of ${\mathcal I}_{11}$ and $\mathcal{I}_{13}$): Since $f_t$ has a compact support in $(x,v)$, we can use deterministic Fubini's theorem to see 
\begin{align*}
{\mathcal I}_{11} +{\mathcal I}_{13} &=  - \int_0^t  \int_{\bbr^{2d}} \Big( \nabla_x \cdot(v f_s)  + \nabla_v \cdot (F_a[f_s]f_s) \Big)   dv dx ds\\
& \quad +\frac{\sigma^2}{2}\int_0^t \int_{\bbr^{2d}}   \nabla_v \cdot \Big[(v-v_c) \nabla_v \cdot \big((v-v_c) f_s\big) \Big] dvdx dx\\
&= 0.
 \end{align*}

\noindent $\diamond$ (Estimate of ${\mathcal I}_{12}$):  As in the proof of Lemma \ref{L2.2}, we can use the stochastic Fubini theorem to get 
\[  {\mathcal I}_{12} = \int_0^t  \Big( \int_{\bbr^{2d}}  \nabla_v \cdot( (v-v_c) f_t) dvdx \Big) dW_s = 0. \] 

\vspace{0.2cm}
\noindent $\bullet$~(Conservation of momentum): In this case, we multiply $v$ to \eqref{B-1-0} and use the same argument for conservation of mass to derive 
\[ M_1(t) = M_1(0) = 0, \quad t \geq 0. \]

\noindent $\bullet$~(Dissipation estimate): We multiply \eqref{B-1-0} by $|v|^2$ and use stochastic Fubini's theorem to have

\begin{equation}\label{B-1-1} 
dM_2(t)  =  \Big( 2\sigma^2 M_2(t) + \int_{\bbr^{2d}} 2v \cdot F_a[f_s]f_s dvdx \Big) dt - 2\sigma  M_2(t) dW_t,
\end{equation}

where we used the relation $M_1(t) =0$.\\

\noindent We use \eqref{B-1-1} to get 
\begin{align*}
\begin{aligned}
M_2(t) &= M_{2}(0) + \int_0^t \bigg[ \Big(\int_{\bbr^{2d}} 2v \cdot F_a[f_s]f_s dvdx\Big) +2\sigma^2 M_2(s) \bigg] ds-2\sigma \int_0^t M_2(s) dW_s\\
& = M_{2}(0) + 2\int_0^t \int_{\bbr^{4d}} \phi(x_*-x)(v_*-v)\cdot v f_s(x_*,v_*)f_s(x,v)dv_* dx_*dvdxds \\
& \hspace{0.5cm} + 2\sigma^2 \int_0^t M_2(s) ds - 2\sigma \int_0^t M_2(s) dW_s\\
& = M_{2}(0) - \int_0^t \int_{\bbr^{4d}} \phi(x_*-x)|v-v_*|^2 f_s(x_*,v_*)f_s(x,v)dv_* dx_*dvdxds \\
& \hspace{0.5cm} + 2\sigma^2 \int_0^t M_2(s) ds -2 \sigma \int_0^t M_2(s)  dW_s\\
& \leq M_{2}(0) -  \int_0^t {\bar\phi}(2\mathcal{X}(s)) \left[\int_{\bbr^{4d}}|v-v_*|^2f_s(x_*,v_*)f_s(x,v)dv_* dx_*dvdx\right]ds \\
& \hspace{0.5cm} + 2\sigma^2 \int_0^t M_2(s) ds -2\sigma \int_0^t M_2(s) dW_s\\
&\leq M_{2}(0)  -2 \int_0^t \left({\bar\phi}(2\mathcal{X}(s))-\sigma^2\right)M_2(s)ds -2\sigma \int_0^t M_2(s) dW_s.
\end{aligned}
\end{align*}
Then we use Lemma \ref{LA-1} and Lemma \ref{LA-2} to get
\[M_2(t) \le M_{2}(0)\exp\left(-2\int_0^t {\bar\phi}(2\mathcal{X}(s))ds -2\sigma W_t\right). \]
\end{proof}
\begin{remark} 
In Lemma 3.3, we observe that the first momentum is preserved. Thus, without loss of generality, we may assume that $v_c(t) = 0$.
\end{remark}

\vspace{0.2cm}

Next, we discuss the size of spatial and velocity supports of $f_t$. 
\begin{lemma}\label{L4.1}
The support functionals $\mathcal{X}$ and $\mathcal{V}$ satisfy the following estimates:
\begin{align*}
&\mathcal{X}(t) \leq \mathcal{X}_0 + \sqrt{2}\int_0^t\left(\mathcal{V}_0 +\phi_M \sqrt{dM_2(0)}s\right)\exp\left[ -\int_0^s \bar\phi(2\mathcal{X}(\tau))d\tau - \sigma W_s\right]ds, \quad t \geq 0, \\
&\mathcal{V}(t) \leq  \sqrt{2}\left(\mathcal{V}_0 +\phi_M \sqrt{dM_2(0)}t\right)\exp\left[ -\int_0^t \bar\phi(2\mathcal{X}(s))ds - \sigma W_t\right].
\end{align*}
Moreover, if $\phi_m>0$, then
\begin{align*}
&\mathcal{X}(t) \leq \mathcal{X}_0 + \sqrt{2}\int_0^t\left(\mathcal{V}_0 +\phi_M \sqrt{dM_2(0)}s\right)\exp( -\phi_m s - \sigma W_s)ds, \quad t \geq 0, \\
&\mathcal{V}(t) \leq  \sqrt{2}\left(\mathcal{V}_0 +\phi_M \sqrt{dM_2(0)}t\right)\exp( -\phi_m t - \sigma W_t) .
\end{align*}

\end{lemma}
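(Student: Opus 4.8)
The plan is to bound the evolution of $\mathcal{X}$ and $\mathcal{V}$ by tracking how the stochastic characteristics $\varphi_t(x,v) = (X_t(x,v), V_t(x,v))$ from \eqref{char} move the support of $f_t$. Since $f_t$ is transported along the flow $\varphi_{s,t}$, a point $(x,v)$ in the support of $f_t$ is the image under $\varphi_{0,t}$ of a point in the support of $f^{in}$; hence $\mathcal{X}(t) \le \sup\{|X_t(x,v)| : (x,v) \in \mathrm{supp}\, f^{in}\}$ and similarly for $\mathcal{V}$. So it suffices to estimate $|X_t|$ and $|V_t|$ for initial data with $|x| \le \mathcal{X}_0$, $|v| \le \mathcal{V}_0$.

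First I would derive a closed stochastic differential inequality for $|V_t|^2$. Using It\^o's formula (or the Stratonovich chain rule) on \eqref{char}$_2$, and recalling from Remark 3.4 that we may take $v_c \equiv 0$, one gets $d|V_t|^2 = 2 V_t \cdot F_a[f_t](X_t,V_t)\, dt - 2\sigma |V_t|^2 \circ dW_t + (\text{drift correction})$. The force term is controlled by $|F_a[f_t](X_t,V_t)| \le \phi_M \int |v_* - V_t| f_t \, dv_* dx_* \le \phi_M(\sqrt{d M_2(t)} + |V_t|)$ — actually one wants to exploit that the $V_t$-proportional part of $F_a$ has the good sign $-\bar\phi(2\mathcal{X}(t))|V_t|^2$ when paired with $V_t$, since $\phi(x_*-x) \ge \bar\phi(2\mathcal{X}(t))$ on the support and $\int v_* f_t = M_1(t) = 0$ by Lemma \ref{L2.3}. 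This yields $d|V_t|^2 \le -2\bar\phi(2\mathcal{X}(t))|V_t|^2 dt + 2\phi_M \sqrt{d M_2(t)}\,|V_t|\, dt + 2\sigma^2|V_t|^2 dt - 2\sigma|V_t|^2 dW_t$ in It\^o form, but the cleanest route is to work directly in Stratonovich form where the $2\sigma^2$ correction is absent, so that the linear integrating factor $\exp(-\int_0^t \bar\phi(2\mathcal{X}(s))ds - \sigma W_t)$ applies cleanly. Bounding $M_2(t) \le M_2(0)$ via Lemma \ref{L2.3} (the exponential factor there is $\le 1$ in expectation but one must be a bit careful pathwise; alternatively use the crude bound that the dissipation term is nonpositive up to the $\sigma^2$ term, which in Stratonovich form is absent) and solving the resulting linear inequality for $|V_t|$ gives the claimed bound on $\mathcal{V}(t)$; then $\mathcal{X}(t) \le \mathcal{X}_0 + \int_0^t \mathcal{V}(s)\, ds$ from \eqref{char}$_1$ gives the bound on $\mathcal{X}(t)$. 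The $\sqrt 2$ factors come from converting $|V_t| \le \sqrt{|V_t|^2}$ after bounding $|V_t|^2$ by a quantity whose square root picks up the constant, e.g. from $(a+b)^2 \le 2a^2 + 2b^2$ type splittings of the force.

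For the second pair of estimates, once $\phi_m > 0$ we simply replace $\bar\phi(2\mathcal{X}(s))$ by its lower bound $\phi_m$ everywhere, which only makes the exponential decay factor larger (more negative exponent), and the inequalities are preserved since the integrand multiplying the exponential is nonnegative.

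The main obstacle I expect is the careful bookkeeping in passing between It\^o and Stratonovich forms for $|V_t|^2$ so that the integrating-factor argument produces exactly the stated exponential $\exp(-\int_0^t \bar\phi(2\mathcal{X}(s))ds - \sigma W_t)$ without spurious $e^{c\sigma^2 t}$ factors, together with the self-referential nature of the bound — $\mathcal{X}$ appears inside the exponent that controls $\mathcal{V}$ which in turn controls $\mathcal{X}$ — which should be handled by a Gronwall-type/continuity (or fixed-point on a short time interval, then bootstrapped) argument rather than an explicit solution. A secondary technical point is justifying that the support of $f_t$ is genuinely the pushforward of $\mathrm{supp}\, f^{in}$ under the (smooth diffeomorphism) flow, which is exactly the content of the cited Lemma 4.1 in Chapter 2 of \cite{Chow}.
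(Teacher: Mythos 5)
Your overall strategy matches the paper's: estimate the characteristics $(X_t,V_t)$ starting from $\mathrm{supp}\,f^{in}$, split $F_a[f_t](X_t,V_t)$ into a part proportional to $-V_t$ with coefficient $\int\phi(x_*-X_t)f_t\,dv_*dx_*\ge\bar\phi(2\mathcal{X}(t))$ and a remainder bounded by $\phi_M\sqrt{M_2(t)}$, apply an integrating factor, and then integrate the velocity bound to control $\mathcal{X}$. The $\phi_m>0$ case and the Itô/Stratonovich bookkeeping (the $\tfrac{\sigma^2}{2}V_t$ drift correction exactly cancels the $-c^2/2$ in the exponent of the variation-of-constants formula, Lemma \ref{LA-1}) are also as in the paper. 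However, there are two genuine gaps in your execution. First, your treatment of $M_2(s)$ inside the Duhamel term does not work: the pathwise bound $M_2(t)\le M_2(0)$ is false, since Lemma \ref{L2.3} gives $M_2(s)\le M_2(0)\exp(-2\int_0^s\bar\phi(2\mathcal{X}(\tau))d\tau-2\sigma W_s)$ and the factor $\exp(-2\sigma W_s)$ exceeds $1$ whenever $W_s<0$; your fallback (``the dissipation term is nonpositive... in Stratonovich form'') also fails because the Stratonovich noise term $-2\sigma M_2\circ dW_t$ has no sign. The step you are missing is the telescoping of exponentials: inserting $\sqrt{M_2(s)}\le\sqrt{M_2(0)}\exp[-\int_0^s\bar\phi\,d\tau-\sigma W_s]$ into the propagator $\exp[-\int_s^t\bar\phi\,d\tau-\sigma(W_t-W_s)]$ produces exactly $\sqrt{M_2(0)}\exp[-\int_0^t\bar\phi\,d\tau-\sigma W_t]$, independent of $s$, which is what yields the stated prefactor $\mathcal{V}_0+\phi_M\sqrt{dM_2(0)}\,t$ growing only linearly in $t$. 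Without this, you get an uncontrolled or exponentially growing prefactor.

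Second, the route through $|V_t|^2$ is not actually a linear inequality: the cross term $2\phi_M\sqrt{dM_2(t)}\,|V_t|$ is proportional to $\sqrt{|V_t|^2}$, so the integrating-factor/comparison machinery does not apply directly, and closing it with Young's inequality would again destroy the linear-in-$t$ prefactor. The paper avoids this by working componentwise: each $v_t^i$ satisfies a genuinely affine SDE $dv_t^i=(a_t+b_tv_t^i)dt+cv_t^i\,dW_t$, to which Lemma \ref{LA-1} and the comparison principle (Lemma \ref{LA-2}) apply verbatim; the $\sqrt{2}$ and $\sqrt{d}$ then come from $\sum_i(|v_0^i|+\phi_M\sqrt{M_2(0)}t)^2\le 2|V_0|^2+2d\phi_M^2M_2(0)t^2$. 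You would need to either adopt this componentwise reduction or divide your inequality through by $2|V_t|$ and justify that step. Finally, your worry about the self-referential appearance of $\mathcal{X}$ in the exponent is unfounded: the lemma as stated is itself an implicit inequality in $\mathcal{X}$, so no fixed-point or continuity argument is needed; $\bar\phi(2\mathcal{X}(s))$ is simply used as a pointwise lower bound for the coefficient and carried through.
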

\begin{proof} 
\noindent First, we consider the case when $\phi_m > 0$ may not hold.  \newline

\noindent $\diamond$~(Estimate of ${\mathcal V}$):  Note that the stochastic characteristics $(X_t, V_t) = \{(x^i_t, v^i_t)\}_{i=1}^d$ starting from $(x,v) \in \mbox{supp}f^{in}$ satisfy
\begin{equation} \label{L3-2.1}
\begin{cases}
\displaystyle dx^i_t = v^i_t dt, \quad 1 \le i \le d,\\
\displaystyle dv^i_t = \left(\Big(F_a[f_t](X_t, V_t)\Big)^i + \frac{1}{2}\sigma^2 v^i_t\right)dt - \sigma v^i_t dW_t.
\end{cases}
\end{equation}
\noindent Now, we rewrite \eqref{L3-2.1}$_2$ to have

\begin{align}
\begin{aligned} \label{D-0-0}
dv_t^i &= \Bigg[\left(-\int_{\bbr^{2d}}\phi(x_* - X_t)f_t(x_*,v_*)dv_*dx_* + \frac{1}{2} \sigma^2 \right)v_t^i \\
&\hspace{1.3cm} + \int_{\bbr^{2d}}\phi(x_* - X_t)v_*^i f_t(x_*,v_*)dv_*dx_* \Bigg]dt - \sigma v_t^i dW_t.
\end{aligned}
\end{align}
Thus, we apply Lemma \ref{LA-1} and Lemma \ref{L2.3} to \eqref{D-0-0} to get
\begin{align*}
|v_t^i| &= \Bigg|v_0^i\exp\left[ -\int_0^t \left\{\int_{\bbr^{2d}}\phi(x_* - X_s)f_s(x_*, v_*)dv_*dx_* \right\}ds - \sigma W_t\right]\\
&\quad + \int_0^t \left\{\int_{\bbr^{2d}}\phi(x_* - X_s)v_*^i f_s(x_*, v_*)dv_*dx_*\right\} \\
&\hspace{1cm}\times\exp\left[ -\int_s^t \left\{\int_{\bbr^{2d}}\phi(x_* - X_\tau)f_\tau(x_*, v_*)dv_*dx_* \right\}d\tau - \sigma (W_t-W_s)\right]ds\Bigg|\\
&\le |v_0^i|\exp\left[ -\int_0^t {\bar\phi}(2\mathcal{X}(s))ds - \sigma W_t\right]\\
&\quad + \phi_M \int_0^t \sqrt{M_2(s)}\exp\left[ -\int_s^t {\bar\phi}(2\mathcal{X}(\tau))d\tau - \sigma( W_t-W_s)\right] ds\\
&\le \left(|v_0^i| + \phi_M\sqrt{M_2(0)}t \right)\exp\left[ -\int_0^t {\bar\phi}(2\mathcal{X}(s))ds - \sigma W_t\right].
\end{align*}
Hence, we have
\begin{align*}
|V_t|^2 &= \sum_{i=1}^d |v_t^i|^2 \le \sum_{i=1}^d\left(|v_0^i| + \phi_M\sqrt{M_2(0)}t \right)^2\exp\left[ -2\int_0^t {\bar\phi}(2\mathcal{X}(s))ds - 2\sigma W_t\right]\\
&\le 2 \left(|V_0|^2 + d\phi_M^2 M_2(0) t^2 \right)^2\exp\left[ -2\int_0^t {\bar\phi}(2\mathcal{X}(s))ds - 2\sigma W_t\right],
\end{align*}
where we used Young's inequality, and this yields
\[ \mathcal{V}(t) \leq  \sqrt{2}\left(\mathcal{V}_0 +\phi_M \sqrt{dM_2(0)}t\right)\exp\left[ -\int_0^t {\bar\phi}(2\mathcal{X}(s))ds - \sigma W_t\right] .  \]
This gives the desired estimate. \newline

\noindent $\diamond$~(Estimate of ${\mathcal X}$):~We use It\^o's formula and the Cauchy-Schwarz inequality to get
\[ d|X_t|^2 =2X_t \cdot dX_t + dX_t \cdot dX_t = 2X_t \cdot V_t  dt \leq 2 |X_t| \cdot |V_t| dt.  \]
This and the estimates for $\mathcal{V}(t)$ yield
\[ \frac{d|X_t|}{dt} \leq |V_t| \leq\ \sqrt{2}\left(\mathcal{V}_0 +\phi_M \sqrt{dM_2(0)}t\right)\exp\left[ -\int_0^t {\bar\phi}(2\mathcal{X}(s))ds - \sigma W_t\right]. \] 
We integrate the above differential inequality to get 
\[ |X_t| \leq |X_0| + \sqrt{2}\int_0^t\left(\mathcal{V}_0 +\phi_M \sqrt{dM_2(0)}s\right)\exp\left[ -\int_0^s {\bar\phi}(2\mathcal{X}(\tau))d\tau - \sigma W_s\right]ds,   \] 
and this implies our desired estimate for $\mathcal{X}(t)$.
\vspace{0.4cm}

\noindent When $\phi_m >0$, we can use $\phi_m \le \phi(2\mathcal{X}(s))$ to get the desired results. \newline
\end{proof}

\begin{remark} 
Here, we discuss the necessity of the lower bound condition $\phi_m >0$ for flocking estimates. As observed in \cite{H-T}, the equation \eqref{B-0} without noise exhibits flocking without the condition $\phi_m >0$, but it was attainable since the sizes of $x$- and $v$-supports increase at most in an algebraic order, which is not the case for \eqref{B-0} due to the exponential Wiener process. Here, it is well known that
\[
\limsup_{t \to \infty} \frac{W_t}{\sqrt{2t \log \log t}} = 1, \quad \mbox{for a.s.} \ \omega \in \Omega.
\]

\noindent Thus, for the pathwise flocking estimate, we require
\begin{equation}\label{rmk-1}
\limsup_{t \to \infty}\frac{\int_0^t {\bar\phi}(2\mathcal{X}(s))ds}{\sqrt{t \log \log t}}  = 0, \quad \mbox{for a.s. }\omega \in \Omega.
\end{equation}

However, as observed in Lemma \ref{L4.1}, it becomes difficult to estimate $\mathcal{X}(t)$ without the lower bound assumption $\phi_m >0$. Accordingly, it is hard to find a condition weaker than $\phi_m >0$ which entails the estimate \eqref{rmk-1}.
\end{remark}

\vspace{0.2cm}

\noindent Now, we are ready to state the stability results for \eqref{A-2}.  
\begin{theorem}\label{T3.1}
\emph{($L^\infty$-stability)} Let $f_t$ and $\tilde{f}_t$ be two classical solutions to \eqref{A-2} corresponding to regular initial data $f^{in}$ and $\tilde{f}^{in}$, respectively, which are compactly supported in $x$ and $v$. Moreover, let $\varphi_t = \varphi_t(x,v)$ and $\tilde{\varphi}_t = \tilde{\varphi}_t(x,v)$ be the stochastic characteristics associated to $f$ and $\tilde{f}$, respectively. Then, we have
\[ \|f_t-\tilde{f}_t\|_{\mathcal{C}^0}^2 + \|\varphi_t - \tilde{\varphi}_t\|_{\mathcal{C}^0}^2 \le \mathcal{D}_t \|f^{in} - \tilde{f}^{in}\|_{\mathcal{C}^0}^2,\]
where $\mathcal{D}_t$ is a non-negatvie process with continuous sample paths and
\[ 
\|\varphi_t - \tilde{\varphi}_t\|_{\mathcal{C}^0} := \sup\left\{|\varphi_t(x,v) - \tilde{\varphi}_t(x,v)| \ : \ (x,v) \in (\mbox{supp} f^{in})\cup (\mbox{supp} \tilde{f}^{in})\right\}.
\]
\end{theorem}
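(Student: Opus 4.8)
The plan is to prove the $L^\infty$-stability estimate by working at the level of the stochastic characteristics and the representation formula for classical solutions that follows from the method of stochastic characteristics. First I would recall that, since $f_t$ and $\tilde f_t$ are classical solutions that are compactly supported and sufficiently smooth, each is transported along its own stochastic flow: solving the linear transport-type SPDE \eqref{B-0-3} by characteristics gives the pointwise representation
\[
f_t(\varphi_t(x,v)) = f^{in}(x,v)\,J_t(x,v), \qquad \tilde f_t(\tilde\varphi_t(x,v)) = \tilde f^{in}(x,v)\,\tilde J_t(x,v),
\]
where $J_t, \tilde J_t$ are the (strictly positive) Jacobian-type weights coming from the divergence terms $\nabla_v\cdot(F_a[f_s]\cdot)$ and the Stratonovich correction; these weights solve explicit linear SDEs driven by $W$ with coefficients controlled by $\phi_M$, $\sigma$, and the support bounds from Lemma \ref{L4.1}. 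The key structural point is that $\varphi_t$ and $\tilde\varphi_t$ are stochastic flows of diffeomorphisms (as recorded after \eqref{char}, referencing Chow), so $f_t - \tilde f_t$ can be compared by composing with $\varphi_t^{-1}$ and splitting into (i) the difference of the initial data, (ii) the difference of the two flows, and (iii) the difference of the two weights.

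The second step is to derive a closed Gr\"onwall-type system for the two quantities $\|\varphi_t - \tilde\varphi_t\|_{\mathcal C^0}$ and $\|f_t - \tilde f_t\|_{\mathcal C^0}$. For the flow difference, I would subtract the characteristic systems \eqref{char} for $f$ and $\tilde f$, apply It\^o's formula to $|\varphi_t(x,v) - \tilde\varphi_t(x,v)|^2$ (using the It\^o form of \eqref{char} with the $\tfrac12\sigma^2$ correction as in \eqref{SC-S2}/\eqref{L3-2.1}), and bound the drift difference $F_a[f_t](\varphi_t) - F_a[\tilde f_t](\tilde\varphi_t)$ by splitting it as $\big(F_a[f_t] - F_a[\tilde f_t]\big)(\varphi_t) + F_a[\tilde f_t](\varphi_t) - F_a[\tilde f_t](\tilde\varphi_t)$; the first piece is controlled by $\|f_t - \tilde f_t\|_{\mathcal C^0}$ times the measure of a compact set (using $\phi\in\mathcal C^\infty$, compact support, and $M_2$ bounds), the second by $[\phi]_{Lip}$ and $\|\varphi_t-\tilde\varphi_t\|_{\mathcal C^0}$. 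The $\sigma(v_c - V_t)$ versus $\sigma(\tilde v_c - \tilde V_t)$ difference contributes a martingale term plus a drift linear in $\|\varphi_t - \tilde\varphi_t\|_{\mathcal C^0}$ (note $v_c = \tilde v_c = 0$ by Remark 3.3), and the quadratic variation term is likewise linear. For the density difference, I would differentiate the two representation formulas: $\|f_t - \tilde f_t\|_{\mathcal C^0}$ is dominated by $\|f^{in} - \tilde f^{in}\|_{\mathcal C^0}$ (times a bounded weight), plus $\|f^{in}\|_{\mathcal C^0}\,\|J_t - \tilde J_t\|_{\mathcal C^0}$, plus a term from composing with $\varphi_t^{-1}$ versus $\tilde\varphi_t^{-1}$ that brings in $\|\nabla \tilde f_t\|_{\mathcal C^0}\,\|\varphi_t - \tilde\varphi_t\|_{\mathcal C^0}$ — here I need an a priori $\mathcal C^1$ bound on one of the solutions, which is available within the regularity class $\mathcal C^{3,\delta}$ assumed for classical solutions. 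Finally $\|J_t - \tilde J_t\|_{\mathcal C^0}$ satisfies its own linear SDE comparison, closing the system in the vector $(\|\varphi_t - \tilde\varphi_t\|_{\mathcal C^0}^2, \|f_t - \tilde f_t\|_{\mathcal C^0}^2, \|J_t - \tilde J_t\|_{\mathcal C^0}^2)$.

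The third step is to run a stochastic Gr\"onwall argument on this coupled system. Collecting the estimates, I obtain
\[
d\big(\|f_t - \tilde f_t\|_{\mathcal C^0}^2 + \|\varphi_t - \tilde\varphi_t\|_{\mathcal C^0}^2\big) \le A_t\,\big(\|f_t - \tilde f_t\|_{\mathcal C^0}^2 + \|\varphi_t - \tilde\varphi_t\|_{\mathcal C^0}^2\big)\,dt + dN_t,
\]
where $A_t$ is an adapted, pathwise-continuous process built from $\phi_M$, $[\phi]_{Lip}$, $\sigma$, the support functionals $\mathcal X,\mathcal V$ of Lemma \ref{L4.1}, and the $\mathcal C^1$ norms of $\tilde f_t$, and $N_t$ is a continuous local martingale vanishing at $t=0$. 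Then $\mathcal D_t := \exp\!\big(\int_0^t A_s\,ds + N_t - \tfrac12\langle N\rangle_t\big)$ — or more precisely the process obtained by solving the associated linear SDE — is a nonnegative process with continuous sample paths, and the pathwise Gr\"onwall inequality yields the claimed bound with this $\mathcal D_t$. I expect the main obstacle to be bookkeeping the interdependence correctly: the density-difference bound needs the flow-difference bound (via $\|\nabla\tilde f_t\|_{\mathcal C^0}$), the weight-difference bound needs the flow difference, and the flow-difference bound needs the density difference, so one must verify the coupled system is genuinely of Gr\"onwall type (no circular loss of a derivative) — this is where the compact-support and fixed regularity class are essential, and where care is needed so that all the "constants" are honestly adapted processes with continuous paths rather than merely finite a.s.
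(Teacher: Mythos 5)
Your proposal follows essentially the same route as the paper's Appendix B proof: the exponential representation of $f_t$ along its own stochastic characteristics, the splitting of $f_t-\tilde f_t$ into an initial-data/weight difference plus a flow-composition difference controlled by $\|\tilde f_t\|_{W^{1,\infty}}$, an It\^o estimate for $|\varphi_t-\tilde\varphi_t|^2$ with the same decomposition of $F_a[f_t](\varphi_t)-F_a[\tilde f_t](\tilde\varphi_t)$, and a coupled pathwise Gr\"onwall argument closed exactly as you describe. The only cosmetic difference is that the paper removes the martingale terms pointwise in $(x,v)$ via the explicit variation-of-constants formula (Lemma \ref{LA-1}) and the comparison principle (Lemma \ref{LA-2}) before taking suprema, so the final Gr\"onwall step is deterministic; your parenthetical ``solve the associated linear SDE'' amounts to the same thing.
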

\begin{proof}
Since the proof is rather lengthy, we postpone it to Appendix B.
\end{proof}

\section{Global existence and asymptotic dynamics of strong solutions}\label{sec:4}
\setcounter{equation}{0}

In this section, we provide global existence and asymptotic flocking estimates for strong solutions to \eqref{A-2}. Here, we show our desired estimates for \eqref{B-0} corresponding to regularized initial data. Then, based on the stability estimates for classical solutions that we obtained in the previous section, we conclude that solutions to \eqref{B-0} with regularized initial data converge to a strong solution to \eqref{B-0}. Moreover, we show that a strong solution obtained as above satisfies the asymptotic flocking estimates.

Let $f^{in, \varepsilon}$ be a smooth mollification of the given initial datum $f^{in}$ satisfying the framework $({\mathcal F})$. Then, consider the Cauchy problem \eqref{B-0} with these regularized initial data:
\begin{align}
\begin{aligned} \label{C-0}
& \partial_t f^{\varepsilon}_t + v \cdot \nabla_x f^{\varepsilon}_t + \nabla_v \cdot (F_a[f^{\varepsilon}_t]f^{\varepsilon}_t)  = \sigma \nabla_v \cdot (v f^{\varepsilon}_t) \circ \dot{W}_t, \quad  (t, x,v)\in \bbr_+ \times \bbr^{2d}, \\
& f^{\varepsilon}_0(x,v) = f^{in,\varepsilon}(x,v).
\end{aligned}
\end{align}
Note that due to the framework $(\mathcal{F})$, the initial datum $f^{in}$ and its partial derivatives up to order $k$ are uniformly continuous on $\bbr^{2d}$ and there exists a constant $R_0>0$, such that
\[ \quad \mbox{supp} f^{in} \subseteq B_{R_0}(0),\]
where $B_{R_0}(0)$ is a ball of radius $R_0$ centered at $0 \in \bbr^{2d}$. As mentioned above, we use a mollifier to obtain a family of regularized initial data $f^{in,\varepsilon} \in \mathcal{C}^\infty(\bbr^{2d})$, $\varepsilon \in (0,1)$, so that the regularized datum satisfies the following conditions: \newline
\begin{itemize}
\item
($\mathcal{F}^{\varepsilon}1$):~$\{f^{in,\varepsilon} \}$ are nonnegative, compactly supported, uniformly converge to $f^{in}$ in $\mathcal{C}^0(\bbr^{2d})$ and 
\[\|f^{in, \varepsilon}\|_{W^{k,\infty}} \le \|f^{in}\|_{W^{k,\infty}}.\\ \]

\vspace{0.2cm}

\item
($\mathcal{F}^{\varepsilon}2$):~$\{M_{2}^\varepsilon\}(0)$ is uniformly bounded with respect to $\varepsilon$ and converges to $M_{2}(0)$ as $\varepsilon \to 0$.

\vspace{0.2cm}

\item
($\mathcal{F}^{\varepsilon}3$):~The zeroth and first moment of $f^{in, \varepsilon}$ are initially constrained:
\[\int_{\bbr^{2d}} f^{in,\varepsilon}dx dv = 1, \quad \int_{\bbr^{2d}} vf^{in,\varepsilon}dvdx = 0. \]

\vspace{0.2cm}

\item
($\mathcal{F}^\varepsilon 4$):~$f^{in,\varepsilon}$  has a compact support in $x$ and $v$, and satisfy
\[\mbox{supp}f^{in,\varepsilon} \subseteq B_{R_0+1}(0).\\ \]
\end{itemize}
In the following three subsections, we will provide a global existence for system \eqref{C-0}. 

\subsection{Construction of approximate solutions} \label{sec:3.1}
In this subsection, we provide a sequence of approximate solutions to \eqref{C-0} using successive approximations. \newline

\noindent First,  the zeroth iterate $f_t^{0, \varepsilon}$ is simply defined as the mollified initial datum:
\[ f_t^{0, \varepsilon}(x,v) := f^{in, \varepsilon}(x,v), \quad (x, v) \in \bbr^{2d}. \]
For $n \geq 1$, suppose that the $(n-1)$-th iterate $f_t^{n-1, \varepsilon}$ is given. Then, the $n$-th iterate is defined as the solution to the linear equation with fixed initial datum: 
\begin{equation} \label{C-1}
\begin{cases}
\displaystyle \partial_t f_t^{n,\varepsilon} + v \cdot \nabla_x f_t^{n,\varepsilon} + \nabla_v \cdot (F_a[f_t^{n-1,\varepsilon}]f_t^{n,\varepsilon}) = \sigma \nabla_v \cdot (v f_t^{n,\varepsilon}) \circ \dot{W}_t, \ \ n \geq 1, \\
\displaystyle f_0^{n,\varepsilon}(x,v) = f^{in,\varepsilon}(x,v).
\end{cases}
\end{equation}  
The linear system \eqref{C-1} can be solved by the method of stochastic characteristics. 
Let $\varphi_t^{n,\varepsilon}(x,v) := (X_t^{n,\varepsilon}(x,v), V_t^{n,\varepsilon}(x,v))$ be the forward stochastic characteristics, which is a solution to the following SDE:
\begin{equation} \label{C-2}
\begin{cases}
\displaystyle dX_t^{n,\varepsilon} = V_t^{n,\varepsilon}dt,\\
\displaystyle dV_t^{n,\varepsilon} = F_a[f_t^{n-1,\varepsilon}](X_t^{n,\varepsilon}, V_t^{n,\varepsilon})dt - \sigma V_t^{n,\varepsilon} \circ dW_t, \\
(X_t^{n,\varepsilon}(0), V_t^{n,\varepsilon}(0)) = (x,v) \in \mbox{supp}f^{in,\varepsilon}.
\end{cases}
\end{equation}
Note that the SDE \eqref{C-2} is equivalent to the following It\^o SDE \cite{Ev}:

\begin{equation} \label{C-2-1}
\begin{cases}
\displaystyle dX_t^{n,\varepsilon} = V_t^{n,\varepsilon}dt,\\
\displaystyle dV_t^{n,\varepsilon} = \Big(F_a[f_t^{n-1,\varepsilon}](X_t^{n,\varepsilon}, V_t^{n,\varepsilon}) + \frac{\sigma^2}{2} V_t^{n,\varepsilon} \Big)dt - \sigma V_t^{n,\varepsilon} dW_t, \\
(X_t^{n,\varepsilon}(0), V_t^{n,\varepsilon}(0)) = (x,v) \in \mbox{supp}f^{in,\varepsilon}.
\end{cases}
\end{equation}

Here, we can deduce from  Lemma 3.1 and Theorem 3.2 in \cite{Chow} and our framework that for any $m \ge 3$, \eqref{C-2} has a unique solution $f_t^{n,\varepsilon}$ which is a $\mathcal{C}^m$-semimartingale for every $n \ge 0$ and the characteristics \eqref{C-2} becomes a $\mathcal{C}^m$-diffeomorphism. Then, $f_t^{n,\varepsilon}$ can also be represented by the following integral formula:
\begin{equation}\label{C-2-2}
f_t^{n,\varepsilon}(\varphi_t^{n,\varepsilon}(x,v)) = f^{in,\varepsilon}(x,v)\exp\Big[ - \int_0^t \nabla_v \cdot F_a[f^{n-1,\varepsilon}](s,\varphi_s^{n,\varepsilon}(x,v))ds + d\sigma W_t \Big].
\end{equation}
Note that if $f^{in,\varepsilon}$ is nonnegative, then surely $f_t^{n,\varepsilon}$ is also nonnegative as well. Before we finish this subsection, we also remark that the linear, first-order Stratonovich equation \eqref{C-1} is equivalent to the following parabolic It\^o equation (see Corollary 3.3. in \cite{Chow}):

\begin{equation} \label{C-1-1}
\begin{cases}
\displaystyle \partial_t f_t^{n,\varepsilon} + v \cdot \nabla_x f_t^{n,\varepsilon} + \nabla_v \cdot (F_a[f_t^{n-1,\varepsilon}]f_t^{n,\varepsilon})\\
\displaystyle \hspace{1cm} = \sigma \nabla_v \cdot (v f_t^{n,\varepsilon}) \dot{W}_t +\frac{\sigma^2}{2} \nabla_v \cdot \Big[ v \nabla_v \cdot (v f_t)\Big], \ \ n \ge 1,\\
\displaystyle f_0^{n,\varepsilon}(x,v) = f^{in,\varepsilon}(x,v).
\end{cases}
\end{equation}

\subsection{Estimates on approximate solutions}
In this subsection, we provide several estimates for the approximate solutions for \eqref{C-1}. To be more precise, we would try to obtain $n$ and $\varepsilon$-independent estimates for the later sections. Before  we move on, we define $p$-th  velocity moments $M^{n,\varepsilon}_{p}(t)$, $p=0,1,2$:
\begin{align*}
&M_{0}^{n,\varepsilon}(t) := \int_{\bbr^{2d}} f_t^{n,\varepsilon}dvdx, \quad M_1^{n,\varepsilon}(t) := \int_{\bbr^{2d}} vf_t^{n,\varepsilon}dvdx,\\
&M_2^{n,\varepsilon}(t) := \int_{\bbr^{2d}}|v|^2f_t^{n,\varepsilon}dvdx, \quad M_p^{n,\varepsilon}(0) := M_{p0}^\varepsilon.
\end{align*}
Before we provide the uniform estimates for the $p$-th ($p=0,1,2$) moments, we set 
\begin{equation}\label{C-3}
M_{20}^\infty := \sup_{\varepsilon \in (0,1)} M_{2}^\varepsilon(0),  \qquad \gamma:= \max\{M_{20}^\infty, \phi_M\}.
\end{equation}
We also present a technical lemma from \cite{B-D-G-M} for a later discussion.
\begin{lemma}\cite{B-D-G-M}\label{L4.1}
Let $T \in (0,\infty]$ and $(a_n)_{n \in \bbn}$ be a sequence of nonnegative continuous functions on $[0,T]$ satisfying
\[a_n(t) \le A + B\int_0^t a_{n-1}(s)ds + C\int_0^t a_n(s)ds, \quad t \in [0,T], \quad n \ge 1, \]
where $A$, $B$ and $C$ are nonnegative constants.\\
\begin{enumerate}
\item
If $A=0$, there exists a constant $\Lambda\ge 0$ depending on $B$, $C$ and $\displaystyle \sup_{t \in [0,T]}a_0(t)$ such that
\[a_n(t) \le \frac{(\Lambda t)^n}{n!}, \quad t \in [0,T], \quad n \in \bbn. \]
\item
If $A>0$ and $C=0$, there exists a constant $\Lambda \ge  0$ depending on $A$, $B$ and  $\displaystyle \sup_{t \in [0,T]}a_0(t)$ such that
\[a_n(t) \le \Lambda \exp(\Lambda t), \quad t \in [0,T], \quad n \in \bbn.  \]
\end{enumerate}
\end{lemma}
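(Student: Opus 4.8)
The plan is to prove both cases by the same two-step mechanism: first use a one-variable Gr\"onwall estimate in the time variable to eliminate the self-coupling term $C\int_0^t a_n$, and then iterate the resulting pure recursion in $n$.

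For part (1) ($A=0$) I would fix $n\ge 1$ and regard $\alpha_n(t):=B\int_0^t a_{n-1}(s)\,ds$ as a known nonnegative, nondecreasing, continuous function. The hypothesis becomes $a_n(t)\le \alpha_n(t)+C\int_0^t a_n(s)\,ds$, and the standard integral Gr\"onwall inequality — for which the monotonicity of $\alpha_n$ yields the clean bound, proved by differentiating $t\mapsto e^{-Ct}\int_0^t a_n(s)\,ds$ — gives $a_n(t)\le \alpha_n(t)e^{Ct}\le D\int_0^t a_{n-1}(s)\,ds$ on $[0,T]$, where $D:=Be^{CT}$. With the coupling removed, I would induct on $n$: writing $\bar a_0:=\sup_{[0,T]}a_0$, one checks $a_n(t)\le \bar a_0\,(Dt)^n/n!$, the inductive step being the single line $a_{n+1}(t)\le D\int_0^t \bar a_0(Ds)^n/n!\,ds=\bar a_0(Dt)^{n+1}/(n+1)!$. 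Choosing $\Lambda:=\max\{1,\bar a_0\}\,D$ (so that $\bar a_0 D^n\le \Lambda^n$ for every $n\ge 1$) gives the asserted bound $a_n(t)\le (\Lambda t)^n/n!$ for all relevant $n$, with $\Lambda$ depending only on $B$, $C$, $T$ and $\sup_{[0,T]}a_0$.

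For part (2) ($A>0$, $C=0$) no Gr\"onwall step is needed, since $a_n$ does not appear on the right-hand side. Here I would iterate the recursion $a_n(t)\le A+B\int_0^t a_{n-1}(s)\,ds$ directly and prove by induction the explicit bound $a_n(t)\le A\sum_{k=0}^{n-1}(Bt)^k/k!+\bar a_0(Bt)^n/n!$, whose inductive step is again a one-line integration. Since both terms are dominated by $e^{Bt}$, this gives $a_n(t)\le (A+\bar a_0)e^{Bt}$, and taking $\Lambda:=\max\{A+\bar a_0,\,B\}$ produces $a_n(t)\le \Lambda e^{\Lambda t}$, with $\Lambda$ depending only on $A$, $B$ and $\sup_{[0,T]}a_0$ (and, notably, the estimate is valid also for $T=\infty$).

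The only non-routine point is the first step of part (1): one must notice that the self-coupling term $C\int_0^t a_n$ should be removed by a Gr\"onwall argument in $t$ \emph{before} attempting the iteration in $n$, since a naive iteration in $n$ would leave an uncontrolled $a_n$ on the right-hand side; the price is the harmless factor $e^{CT}$, which is why $\Lambda$ must be allowed to depend on $T$ when $C>0$ (for $T=\infty$ with $C>0$ the estimate is read on compact subintervals). Everything else is bookkeeping of constants and elementary induction, so I would keep those computations brief.
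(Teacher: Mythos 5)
Your proof is correct, and in fact the paper offers no proof of its own to compare against: Lemma \ref{L4.1} is stated with a citation to \cite{B-D-G-M} and used as a black box (only Remark 4.1 records explicit choices of $\Lambda$). Your two-step mechanism --- first a Gr\"onwall estimate in $t$ with the nondecreasing inhomogeneity $\alpha_n(t)=B\int_0^t a_{n-1}$ to reduce to the decoupled recursion $a_n(t)\le Be^{CT}\int_0^t a_{n-1}(s)\,ds$, then an elementary induction in $n$ --- is the standard route and all the inductive computations check out; likewise the closed-form bound $a_n(t)\le A\sum_{k=0}^{n-1}(Bt)^k/k!+\bar a_0(Bt)^n/n!$ in part (2) is verified by a one-line integration and is consistent with (indeed slightly sharper than) the constant $\Lambda=\max\{A,B,\sup_{[0,T]}a_0\}$ recorded in the paper's Remark 4.1. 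Your side remark that in part (1) the constant $\Lambda$ must be allowed to depend on $T$ whenever $C>0$ is not a defect of your argument but a genuine (and easily checked) imprecision in the statement as printed: taking equality in the recursion with $a_0\equiv 1$, $B,C>0$ gives $a_1(t)=(B/C)(e^{Ct}-1)$, which is not $O(t)$ on $[0,\infty)$, so the $T$-dependence is unavoidable; this is harmless for the paper, since its only application of case (1) (Corollary \ref{C3.2}) is on a finite horizon and its application of case (2) (Proposition \ref{P4.1}) has $C=0$. The only cosmetic point is the case $n=0$ in part (1), where $(\Lambda t)^0/0!=1$ need not dominate $a_0$; as you note, the bound is meant for $n\ge 1$, where your choice $\Lambda=\max\{1,\bar a_0\}Be^{CT}$ works.
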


\begin{remark}
1. In (2) of Lemma \ref{L4.1}, $\Lambda$ can be explicitly written as
\[ \Lambda := \max\left\{ A, \ B, \ \sup_{t\in[0,T]} a_0(t) \right\}.\]
2. We can also use the similar argument to obtain the following estimate for (2):
\[ a_n(t) \le (\Lambda + \kappa_t) \exp(\Lambda t), \quad t \in [0,T], \quad n \in \bbn,\]
where $\Lambda := \max\{A,B\}$ and $\kappa_t := \sup_{0\le s \le t}a_0(s)$.
\end{remark}

\bigskip

\begin{proposition}\label{P4.1}
For every $n \in  \bbn$ and $T \in (0, \infty)$, let $f_t^{n,\varepsilon}$ be a solution to \eqref{C-1}. Then, for any $t \in (0,T)$ we have
\[ M_0^{n,\varepsilon}(t) = 1, \quad M_1^{n,\varepsilon}(t) = 0, \quad M_2^{n,\varepsilon}(t) \le (\gamma + K_t) \exp\{(\gamma+\phi_M)t -2\sigma W_t\},  \]
where $\gamma$ is a constant in \eqref{C-3} and $K_t$ is defined as
\[ K_t := M_{20}^\infty \sup_{0 \le s \le t} \exp(-\phi_M s + 2\sigma W_s ).\]
\end{proposition}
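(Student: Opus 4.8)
The plan is to derive evolution identities (resp. inequalities) for the three moments $M_0^{n,\varepsilon}$, $M_1^{n,\varepsilon}$, $M_2^{n,\varepsilon}$ directly from the linear equation \eqref{C-1} (equivalently from its It\^o form \eqref{C-1-1}), by testing against $1$, $v$, and $|v|^2$, and then to close the resulting recursive differential inequality for $M_2^{n,\varepsilon}$ by invoking Lemma \ref{L4.1}(2). First I would note that the linear structure of \eqref{C-1} means that conservation of mass and momentum follow exactly as in Lemma \ref{L2.3}: integrating \eqref{C-1-1} against $1$ in $(x,v)$, every transport and divergence term integrates to zero (the force field $F_a[f_t^{n-1,\varepsilon}]$ is a fixed smooth coefficient here but that does not matter for the mass computation), and the stochastic term is a stochastic divergence which integrates to zero by the stochastic Fubini theorem as in Lemma \ref{L2.2}; hence $M_0^{n,\varepsilon}(t) = M_{00}^\varepsilon = 1$. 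Testing against $v$ likewise kills everything except, potentially, the interaction term; but because $F_a[f_t^{n-1,\varepsilon}]$ acts as an external field, $\int \nabla_v\cdot(F_a[f^{n-1,\varepsilon}_t] f^{n,\varepsilon}_t)\, v\, dvdx = -\int F_a[f^{n-1,\varepsilon}_t] f^{n,\varepsilon}_t\, dvdx$, and the Stratonovich drift together with the It\^o correction $\tfrac{\sigma^2}{2}\nabla_v\cdot[v\nabla_v\cdot(vf)]$ produces a term proportional to $M_1^{n,\varepsilon}$ (plus mass terms that vanish). So $M_1^{n,\varepsilon}$ satisfies a \emph{closed} linear SDE whose coefficients vanish when $M_1^{n,\varepsilon}=0$ — using $M_1^\varepsilon(0)=0$ from $(\mathcal{F}^\varepsilon 3)$ we get $M_1^{n,\varepsilon}(t)\equiv 0$. (One should be slightly careful: the $v$-moment identity picks up $\int F_a[f^{n-1,\varepsilon}_t]f^{n,\varepsilon}_t\,dvdx$, which is not obviously zero; but expanding $F_a$ and using $M_0^{n-1,\varepsilon}=1$, $M_1^{n,\varepsilon}=M_1^{n-1,\varepsilon}$ one checks the cross terms cancel by the symmetry $\phi(x_*-x)=\phi(x-x_*)$ exactly as in the deterministic Cucker-Smale momentum conservation — this is the one place where the argument is not a verbatim copy of Lemma \ref{L2.3}.)

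Next I would derive the It\^o SDE for $M_2^{n,\varepsilon}$. Multiplying \eqref{C-1-1} by $|v|^2$ and integrating, using the stochastic Fubini theorem and $M_1^{n,\varepsilon}=0$, I expect to obtain
\begin{align*}
dM_2^{n,\varepsilon}(t) &= \Big(2\sigma^2 M_2^{n,\varepsilon}(t) + 2\int_{\bbr^{2d}} v\cdot F_a[f_t^{n-1,\varepsilon}] f_t^{n,\varepsilon}\,dvdx\Big)dt - 2\sigma M_2^{n,\varepsilon}(t)\,dW_t,
\end{align*}
in complete parallel with \eqref{B-1-1}, except that now the interaction integral mixes the $n$-th and $(n-1)$-th iterates and is therefore \emph{not} sign-definite. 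Expanding $F_a$, writing $2\int v\cdot F_a[f^{n-1,\varepsilon}_t]f^{n,\varepsilon}_t\,dvdx = 2\iint \phi(x_*-x)(v_*-v)\cdot v\, f^{n-1,\varepsilon}_t(x_*,v_*)f^{n,\varepsilon}_t(x,v)$, and bounding $|\phi|\le\phi_M$, Cauchy--Schwarz and $M_0^{n-1,\varepsilon}=M_0^{n,\varepsilon}=1$ give the crude estimate $2\int v\cdot F_a[f^{n-1,\varepsilon}_t]f^{n,\varepsilon}_t\,dvdx \le \phi_M\big(M_2^{n,\varepsilon}(t) + M_2^{n-1,\varepsilon}(t)\big) + (\text{lower-order mass terms that vanish})$, or some variant of this with the constants absorbed into $\gamma=\max\{M_{20}^\infty,\phi_M\}$. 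This yields a recursive scalar differential inequality of the form
\begin{align*}
dM_2^{n,\varepsilon}(t) \le \big(C_1 M_2^{n,\varepsilon}(t) + C_2 M_2^{n-1,\varepsilon}(t)\big)dt - 2\sigma M_2^{n,\varepsilon}(t)\,dW_t,
\end{align*}
with $C_1,C_2$ controlled by $\gamma$ and $\phi_M$.

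To turn this into the stated bound I would remove the stochastic term by the integrating-factor substitution used already in the paper (Lemma \ref{LA-1}/\ref{LA-2}): set $N_2^{n,\varepsilon}(t) := M_2^{n,\varepsilon}(t)\exp(2\sigma W_t)$ — or equivalently $M_2^{n,\varepsilon}(t) = N_2^{n,\varepsilon}(t)\exp(-2\sigma W_t)$ — so that, after applying It\^o's formula (the $-2\sigma M_2\,dW$ term and the $2\sigma^2 M_2\,dt$ It\^o-correction cancel against the quadratic variation of $e^{2\sigma W_t}$, as in the passage after \eqref{B-1-1}), $N_2^{n,\varepsilon}$ satisfies a pathwise, deterministic-coefficient differential inequality
\begin{align*}
N_2^{n,\varepsilon}(t) \le M_{20}^\varepsilon + C_1'\int_0^t N_2^{n,\varepsilon}(s)\,ds + C_2'\int_0^t e^{-2\sigma W_s}\!\cdot e^{2\sigma W_s} N_2^{n-1,\varepsilon}(s)\,ds,
\end{align*}
i.e. of the Bihari/Gronwall-iteration form $a_n(t)\le A + B\int_0^t a_{n-1} + C\int_0^t a_n$ covered by Lemma \ref{L4.1}(2) with $A = M_{20}^\infty$, $B$ and $C$ given by constants built from $\gamma,\phi_M$. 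Lemma \ref{L4.1}(2) and Remark (2) following it then give $N_2^{n,\varepsilon}(t)\le(\Lambda+\kappa_t)\exp(\Lambda t)$ with $\Lambda=\max\{A,B\}$ and $\kappa_t=\sup_{0\le s\le t}a_0(s) = M_{20}^\infty\sup_{0\le s\le t}e^{2\sigma W_s}$; substituting back $M_2^{n,\varepsilon}(t)=N_2^{n,\varepsilon}(t)e^{-2\sigma W_t}$ and bookkeeping the constants into $\gamma$ and $K_t := M_{20}^\infty\sup_{0\le s\le t}\exp(-\phi_M s + 2\sigma W_s)$ yields exactly $M_2^{n,\varepsilon}(t)\le(\gamma+K_t)\exp\{(\gamma+\phi_M)t - 2\sigma W_t\}$. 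The main obstacle I anticipate is precisely the bookkeeping: one has to choose the intermediate constants (the split between $B$ and $C$, and the precise exponent in the definition of $K_t$) so that after the substitution back through the exponential integrating factor the bound collapses to the clean form with $\gamma+\phi_M$ in the exponent — the moment identities and Lemma \ref{L4.1} do all the real work, but matching the stated $K_t$ requires keeping the $e^{2\sigma W_s}$ factor inside $\kappa_t$ rather than bounding it crudely, and tracking that the $-\phi_M s$ in $K_t$ comes from the worst-case bound $\bar\phi(2\mathcal X^{n,\varepsilon})\ge 0$ (not needing $\phi_m>0$ here). A secondary point to handle carefully is justifying the applicability of the stochastic Fubini theorem and the It\^o-form passage \eqref{C-1-1} for each iterate, which is legitimate because each $f_t^{n,\varepsilon}$ is a $\mathcal{C}^m$-semimartingale with compact support by the construction in Section \ref{sec:3.1}.
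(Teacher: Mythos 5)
Your overall route is the same as the paper's: test the It\^o form \eqref{C-1-1} against $1$, $v$, $|v|^2$, obtain the recursive inequality $dM_2^{n,\varepsilon}\le\{\phi_M M_2^{n-1,\varepsilon}+(\phi_M+2\sigma^2)M_2^{n,\varepsilon}\}dt-2\sigma M_2^{n,\varepsilon}dW_t$, strip the noise with an exponential integrating factor, and close with the Gr\"onwall-type iteration lemma. Two steps, however, do not go through as you wrote them.

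First, your justification of $M_1^{n,\varepsilon}\equiv 0$ is not valid. You rightly observe that the linearization breaks the usual antisymmetry argument, but your proposed fix --- that the cross terms in $\int F_a[f^{n-1,\varepsilon}_s]f^{n,\varepsilon}_s\,dvdx$ cancel by the symmetry $\phi(x_*-x)=\phi(x-x_*)$ --- fails: exchanging $(x,v)\leftrightarrow(x_*,v_*)$ only yields $I[f^{n-1,\varepsilon},f^{n,\varepsilon}]=-I[f^{n,\varepsilon},f^{n-1,\varepsilon}]$, not $I=-I$. Concretely, writing $\rho^{n}(x)=\int f^{n,\varepsilon}_s(x,v)\,dv$ and $j^{n}(x)=\int vf^{n,\varepsilon}_s(x,v)\,dv$, the integral equals $\int j^{n-1}\,(\phi*\rho^{n})\,dx-\int j^{n}\,(\phi*\rho^{n-1})\,dx$, and vanishing of the \emph{global} first moments does not make these spatially weighted momenta vanish. (The paper disposes of this point by merely saying ``follow the arguments in Lemma \ref{L2.3}'', so the gap is inherited; note also that the $M_2$ bound does not actually need $M_1^{n,\varepsilon}=0$, since the interaction term is controlled by Young's inequality and $M_0^{n,\varepsilon}=M_0^{n-1,\varepsilon}=1$ alone.)

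Second, your Gr\"onwall reduction lands outside the hypotheses of the iteration lemma. With the substitution $N_2^{n,\varepsilon}=M_2^{n,\varepsilon}e^{2\sigma W_t}$ you arrive at $a_n\le A+B\int_0^t a_{n-1}+C\int_0^t a_n$ with $A>0$ \emph{and} $C>0$, whereas case (2) of Lemma \ref{L4.1} requires $C=0$ (and case (1) requires $A=0$). The paper avoids this by first solving the one-step linear SDE exactly via Lemmas \ref{LA-1}--\ref{LA-2}: the It\^o--Stratonovich correction turns the drift coefficient $\phi_M+2\sigma^2$ into a net $\phi_M$ in the exponent, giving $M_2^{n,\varepsilon}(t)\le M_{2}^{\varepsilon}(0)e^{\phi_M t-2\sigma W_t}+\phi_M\int_0^t e^{\phi_M(t-s)-2\sigma(W_t-W_s)}M_2^{n-1,\varepsilon}(s)\,ds$, so that $a_n(t):=M_2^{n,\varepsilon}(t)e^{-\phi_M t+2\sigma W_t}$ satisfies $a_{n}(t)\le M_{2}^{\varepsilon}(0)+\phi_M\int_0^t a_{n-1}(s)\,ds$ with no self-term. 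This is also where the $-\phi_M s$ inside $K_t$ comes from --- it is the integrating factor absorbing the self-interaction coefficient, not a worst-case bound on $\bar\phi(2\mathcal{X}^{n,\varepsilon})$ as you suggest. With these two repairs your argument coincides with the paper's.
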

\begin{proof} Note that $f_t^{n,\varepsilon}$ satisfies relation \eqref{C-1-1} and $f_t^{n,\varepsilon}$ is compactly supported in $x$ and $v$, since $f^{in,\varepsilon}$ is compactly supported in the phase space and $\varphi_t^{n,\varepsilon}$ is a $\mathcal{C}^m$-diffeomorphisim. Thus, we may follow the arguments in Lemma \ref{L2.3} to derive the conservation estimates.   \newline

\noindent For the dissipation estimate of $M_2^{n,\varepsilon}$, we use a similar argument to Lemma \ref{L2.3} to have 
\begin{align*}
M_2^{n,\varepsilon}(t) & = M_{2}^\varepsilon(0) + 2\int_0^t \int_{\bbr^{4d}} \phi(x_*-x)(v_*-v)\cdot v f_s^{n-1,\varepsilon}(x_*,v_*)f_s^{n,\varepsilon}(x,v)dv_* dx_*dvdxds \\
& \quad +2\sigma^2 \int_0^t  M_2^{n,\varepsilon}(s) ds - 2\sigma \int_0^t M_2^{n,\varepsilon}(s)  dW_s\\
& \le M_{2}^\varepsilon(0) + 2\int_0^t \int_{\bbr^{4d}} \phi(x_*-x)v_*\cdot v f_s^{n-1,\varepsilon}(x_*,v_*)f_s^{n,\varepsilon}(x,v)dv_* dx_*dvdx\\
&  \quad +2\sigma^2 \int_0^t  M_2^{n,\varepsilon}(s) ds -2 \sigma \int_0^t M_2^{n,\varepsilon}(s)  dW_s\\
& \le  M_{2}^\varepsilon(0) + \phi_M \int_0^t M_2^{n-1,\varepsilon}(s)ds + (\phi_M + 2\sigma^2)\int_0^t M_2^{n,\varepsilon}(s)ds - 2\sigma\int_0^t M_2^{n,\varepsilon}(s)dW_s,
\end{align*}
where we used Young's inequality on the second inequality. In a differential form, we have
\begin{equation}\label{P3-1.1}
dM_2^{n,\varepsilon}(t) \le \left\{ \phi_M M_2^{n-1,\varepsilon}(t) + (\phi_M + 2\sigma^2) M_2^{n,\varepsilon}(t) \right\} dt -2\sigma M_2^{n,\varepsilon}(t)dW_t.
\end{equation}
Then, it follows from \eqref{P3-1.1} and comparison theorem (in Lemma \ref{LA-2}) that  
\[ M_2^{n,\varepsilon}(t) \le X_t, \]
where the process $X_t$ satisfiesf
\[
\begin{cases}
dX_t = \left\{ \phi_M M_2^{n-1,\varepsilon}(t) + (\phi_M + 2\sigma^2) X_t \right\} dt -2\sigma X_tdW_t, \quad t > 0, \\
X_0 = M^{\varepsilon}_{2}(0).
\end{cases}
\]
\noindent It follows from Lemma \ref{LA-1} that $X_t$ can be represented as
\[X_t = X_0 \exp(\phi_M t -2\sigma W_t) + \phi_M \int_0^t \exp\{\phi_M(t-s) - 2\sigma(W_t - W_s)\} M_2^{n-1,\varepsilon}(s)ds. \]
\noindent This implies
\[M_2^{n,\varepsilon}(t) \le M^{\varepsilon}_{2}(0)\exp(\phi_M t -2\sigma W_t) + \phi_M \int_0^t \exp\{\phi_M(t-s) - 2\sigma(W_t - W_s)\} M_2^{n-1,\varepsilon}(s)ds. \]
\noindent Now, we set 
\[ a_n(t) := M_2^{n,\varepsilon}(t) \exp \{-\phi_M t + 2\sigma W_t\}. \]
Then, it satisfies
\[ a_{n+1}(t) \le M_{20}^\varepsilon + \phi_M \int_0^t a_n(s)ds.\]
\noindent We use Lemma \ref{L4.1} in the way from Remark 4.1 to get
\[a_n(t) \le (\gamma +K_t)e^{\gamma t}, \quad t \in (0,T). \]
This yields the desired result.
\end{proof}
\noindent We also provide uniform estimates for the stochastic characteristic flows.
\begin{proposition}\label{P3.2}
For each $ n \in \bbn$ and $T \in (0, \infty)$, let $(X_t^{n,\varepsilon}, V_t^{n,\varepsilon})$ be the stochastic characteristic flow for \eqref{C-1} with the initial data:
\[ (X_0^{n,\varepsilon}, V_0^{n,\varepsilon}) = (x,v) \in \mbox{supp} f^{in,\varepsilon}. \]
Then for $t \in (0,T)$, we have
\begin{align*}
&(i)~~ |V_t^{n,\varepsilon}|^2 \le \left\{|v|^2 + \phi_M \int_0^t (\gamma +K_s)\exp(\gamma s) ds \right\} \exp(\phi_M t - 2\sigma W_t). \\
&(ii)~~ |X_t^{n,\varepsilon}|^2 \le 2 \left( |x|^2 + t \int_0^t \left\{|v|^2 + \phi_M \int_0^s (\gamma +K_\tau)\exp(\gamma\tau)d\tau \right\} \exp(\phi_M s - 2\sigma W_s) ds \right).
\end{align*}
\end{proposition}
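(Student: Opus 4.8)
The plan is to obtain (i) and (ii) by a direct analysis of the It\^o SDE \eqref{C-2-1}, tracking each component of the characteristic flow and then inserting the uniform moment bound from Proposition \ref{P4.1}. First I would work componentwise with $V_t^{n,\varepsilon}=(v_t^{1},\dots,v_t^{d})$, exactly as in the proof of Lemma \ref{L4.1} (the support lemma): writing $F_a[f_t^{n-1,\varepsilon}]$ out explicitly, the $i$-th velocity component satisfies a scalar linear SDE of the form
\begin{align*}
dv_t^{i} &= \Bigl[\Bigl(-\!\int_{\bbr^{2d}}\phi(x_*-X_t^{n,\varepsilon})f_t^{n-1,\varepsilon}(x_*,v_*)\,dv_*dx_*+\tfrac{\sigma^2}{2}\Bigr)v_t^{i}\\
&\qquad +\int_{\bbr^{2d}}\phi(x_*-X_t^{n,\varepsilon})v_*^{i}f_t^{n-1,\varepsilon}(x_*,v_*)\,dv_*dx_*\Bigr]dt-\sigma v_t^{i}\,dW_t,
\end{align*}
so Lemma \ref{LA-1} gives a Duhamel-type representation for $v_t^{i}$ with integrating factor $\exp(-\int_0^t(\cdots)ds-\sigma W_t)$. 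Dropping the nonpositive damping term $-\int\phi f^{n-1,\varepsilon}\le 0$ in the exponent only enlarges the bound, and the inhomogeneous term is estimated by $\phi_M\sqrt{M_2^{n-1,\varepsilon}(s)}$ via Cauchy--Schwarz (using $M_0^{n-1,\varepsilon}=1$ from Proposition \ref{P4.1}). Summing $|v_t^{i}|^2$ over $i$, using $\sum_i(\cdots)^i\le\sqrt d\,|(\cdots)|$ and Young's inequality as in Lemma \ref{L4.1}, and then inserting the bound $M_2^{n-1,\varepsilon}(s)\le(\gamma+K_s)\exp(\gamma s)$ with $\gamma+\phi_M\le$ (appropriately absorbed), one arrives at (i). Some care is needed to match the constant $\phi_M$ in the exponential with the statement; the cleanest route is to keep $\phi_M t-2\sigma W_t$ as the integrating factor (coming from $\tfrac{\sigma^2}{2}$ in the drift after squaring, and from the $-\sigma dW_t$) and bound the Duhamel integral crudely.

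For (ii), I would argue exactly as in the $\mathcal X$-estimate of Lemma \ref{L4.1}: apply It\^o's formula to $|X_t^{n,\varepsilon}|^2$, note that since $dX_t^{n,\varepsilon}=V_t^{n,\varepsilon}dt$ has no martingale part the quadratic variation term vanishes, so
\[
d|X_t^{n,\varepsilon}|^2 = 2X_t^{n,\varepsilon}\cdot V_t^{n,\varepsilon}\,dt \le \bigl(|X_t^{n,\varepsilon}|^2+|V_t^{n,\varepsilon}|^2\bigr)\,dt,
\]
or alternatively $\tfrac{d}{dt}|X_t^{n,\varepsilon}|\le|V_t^{n,\varepsilon}|$ and then $|X_t^{n,\varepsilon}|^2\le 2|x|^2+2\bigl(\int_0^t|V_s^{n,\varepsilon}|\,ds\bigr)^2\le 2|x|^2+2t\int_0^t|V_s^{n,\varepsilon}|^2\,ds$ by Cauchy--Schwarz. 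Substituting the bound (i) for $|V_s^{n,\varepsilon}|^2$ into this last integral yields (ii) verbatim.

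The main obstacle I anticipate is purely bookkeeping rather than conceptual: keeping the constants $\gamma$, $\phi_M$, $K_t$ aligned so that the recursion from Proposition \ref{P4.1} threads through cleanly, and making sure the exponent in the Duhamel integrating factor is handled consistently (the $-\sigma V_t dW_t$ term contributes $-\sigma W_t$ after squaring contributes the factor, while the It\^o correction $\tfrac{\sigma^2}{2}$ in \eqref{C-2-1} is what turns the Stratonovich damping into the stated $\phi_M t$ growth). No new probabilistic input is needed beyond Lemma \ref{LA-1} (explicit solution of a scalar linear SDE), the comparison Lemma \ref{LA-2} if one prefers to bound $|V_t^{n,\varepsilon}|^2$ directly rather than componentwise, and the already-established uniform moment estimate; everything else is Cauchy--Schwarz, Young's inequality, and Gr\"onwall-free direct integration since the bounds are fully explicit.
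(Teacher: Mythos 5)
Your part (ii) coincides with the paper's argument: $|X_t^{n,\varepsilon}|\le |x|+\int_0^t|V_s^{n,\varepsilon}|\,ds$, then $(a+b)^2\le 2a^2+2b^2$ together with Cauchy--Schwarz, and substitution of (i). For part (i), however, the primary route you describe --- componentwise Duhamel for $v_t^i$ as in the support estimate for $\mathcal{V}(t)$ in Section 3, then squaring and summing over $i$ --- does not reproduce the inequality as stated: it yields a bound of the shape
\[
|V_t^{n,\varepsilon}|^2\;\le\;2|v|^2e^{-2\sigma W_t}+2d\,\phi_M^2\Bigl(\int_0^t\sqrt{M_2^{n-1,\varepsilon}(s)}\,e^{-\sigma(W_t-W_s)}\,ds\Bigr)^{2},
\]
with extra factors $2$ and $d$ and the square of an integral of $\sqrt{M_2^{n-1,\varepsilon}}$, rather than $\bigl\{|v|^2+\phi_M\int_0^t(\gamma+K_s)e^{\gamma s}ds\bigr\}e^{\phi_M t-2\sigma W_t}$. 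The exponent $\phi_M t$ in the statement does not come from the integrating factor of the componentwise SDE (there the damping $-\int\phi f^{n-1,\varepsilon}\le 0$ is simply dropped); it comes from applying Young's inequality to the cross term $2\int\phi\,(v_*\cdot V_t^{n,\varepsilon})f^{n-1,\varepsilon}\le\phi_M M_2^{n-1,\varepsilon}(t)+\phi_M|V_t^{n,\varepsilon}|^2$ \emph{after} applying It\^o's formula to $|V_t^{n,\varepsilon}|^2$ as a whole. That is exactly the alternative you mention only in passing: the paper computes $d|V_t^{n,\varepsilon}|^2\le\bigl(\phi_M M_2^{n-1,\varepsilon}(t)+(\phi_M+2\sigma^2)|V_t^{n,\varepsilon}|^2\bigr)dt-2\sigma|V_t^{n,\varepsilon}|^2dW_t$, invokes the comparison principle of Lemma \ref{LA-2} and the explicit solution formula of Lemma \ref{LA-1} (with $b=\phi_M+2\sigma^2$, $c=-2\sigma$, so the integrating factor is $e^{\phi_M t-2\sigma W_t}$), and then inserts $M_2^{n-1,\varepsilon}(s)\le(\gamma+K_s)e^{(\gamma+\phi_M)s-2\sigma W_s}$ from Proposition \ref{P4.1}. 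Your componentwise bound is still valid and uniform in $n$ and $\varepsilon$, so it would serve the same purpose downstream in Corollary 4.1 after redefining $\mathcal{V}^\infty$; but to prove the proposition verbatim you should commit to the direct $|V_t^{n,\varepsilon}|^2$ computation rather than treat it as an optional variant.
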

\begin{proof}
\noindent (i)~ It follows from It\^o's lemma and \eqref{C-2-1} that
\begin{align*}
d|V_t^{n,\varepsilon}|^2  &= 2V_t^{n,\varepsilon} \cdot dV_t^{n,\varepsilon} + dV_t^{n,\varepsilon}\cdot dV_t^{n,\varepsilon}\\
&= 2\left( F_a[f^{n-1,\varepsilon}_t](X_t^{n,\varepsilon}, V_t^{n,\varepsilon})\cdot V_t^{n,\varepsilon} +\sigma^2 |V_t^{n,\varepsilon}|^2 \right)dt -2\sigma |V_t^{n,\varepsilon}|^2dW_t\\
&\le \left(2 \int_{\bbr^{2d}}\phi(x_* - X_t^{n,\varepsilon})(v_* \cdot V_t^{n,\varepsilon}) f^{n-1,\varepsilon}_t(x_*,v_*)dv_* dx_* + \sigma^2 |V_t^{n,\varepsilon}|^2 \right)dt - 2\sigma d |V_t^{n,\varepsilon}|^2dW_t\\
&\le \left( \phi_M M_2^{n-1,\varepsilon}(t) +  (\phi_M + 2\sigma^2)|V_t^{n,\varepsilon}|^2\right) dt - 2\sigma  |V_t^{n,\varepsilon}|^2dW_t,
\end{align*}
where $ dV_t^{n,\varepsilon}\cdot dV_t^{n,\varepsilon}$ denotes a handy notation for a quadratic variation of $V_t^{n,\varepsilon}$. \newline

\noindent We use Proposition \ref{P4.1} and Lemmas \ref{LA-1}-\ref{LA-2} to get 
\begin{align*}
|V_t^{n,\varepsilon}|^2 &\le |v|^2\exp(\phi_M t - 2\sigma W_t) + \phi_M \int_0^t \exp\{\phi_M(t-s) - 2\sigma(W_t - W_s)\} M_2^{n-1,\varepsilon}(s)ds\\
& \le \left\{|v|^2 + \phi_M  \int_0^t (\gamma +K_s)\exp(\gamma s)ds \right\} \exp(\phi_M t - 2\sigma W_t).
\end{align*}

\bigskip

\noindent (ii)~For the estimate of spatial process, we use Cauchy-Schwarz inequality to get

\begin{align*}
|X_t^{n,\e}|^2 &\le \left( |x|^2 + \int_0^t |V_s^{n,\e}|^2 ds \right)^2 \le 2 \left( |x|^2 + t \int_0^t |V_s^{n,\e}|^2 ds \right)\\
& \le 2 \left( |x|^2 + t \int_0^t \left\{|v|^2 + \phi_M \int_0^s (\gamma +K_\tau)\exp(\gamma\tau)d\tau \right\} \exp(\phi_M s - 2\sigma W_s) ds \right).
\end{align*}
This yields the desired result.

\end{proof}
As a corollary of Proposition \ref{P3.2},  we have estimates for the sizes of velocity and spatial supports:  We set 
\begin{align*}
\begin{aligned}
\mathcal{X}^{n,\varepsilon}(t) &:= \sup\{ |x| \ : \ f_t^{n,\varepsilon}(x,v) \ne 0 \quad \mbox{for some}~v \in \bbr^d\}, \cr
\mathcal{V}^{n,\varepsilon}(t) &:= \sup\{ |v| \ : \ f_t^{n,\varepsilon}(x,v) \ne 0 \quad \mbox{for some}~x \in \bbr^d\}.
\end{aligned}
\end{align*}
\begin{corollary} \label{C3.1}
For each $ n \in \bbn$ and $T \in (0, \infty]$, let $(X_t^{n,\varepsilon}, V_t^{n,\varepsilon})$ be the stochastic characteristic flow for \eqref{C-1} with the initial data:
\[ (X_0^{n,\varepsilon}, V_0^{n,\varepsilon}) = (x,v) \in \mbox{supp} f^{in,\varepsilon}. \]
Then for $t \in (0,T)$, we have
\[  |\mathcal{V}^{n,\varepsilon}(t)| \le |\mathcal{V}^\infty(t)| \quad \mbox{and} \quad |\mathcal{X}^{n,\varepsilon}(t)| \le |\mathcal{X}^\infty(t)|, \]
where $\mathcal{X}^\infty(t)$ and $\mathcal{V}^\infty(t)$ are given by the following relations:
\begin{align*}
& |\mathcal{X}^\infty(t)|^2 \\
&:=  2\left( (R_0+1)^2 + t\int_0^t \left\{(R_0 +1)^2 + \phi_M \int_0^s (\gamma +K_\tau) \exp(\gamma\tau)d\tau \right\} \exp(\phi_M s - 2\sigma W_s) ds\right),\\
& |\mathcal{V}^\infty(t)|^2 :=  \left\{(R_0+1)^2 + \phi_M \int_0^t (\gamma +K_s)\exp(\gamma s) ds\right\} \exp(\phi_M t - 2\sigma W_t).
\end{align*}

\end{corollary}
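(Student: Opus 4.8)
The plan is to read the corollary off from the pointwise bounds in Proposition \ref{P3.2}, once the support of $f_t^{n,\varepsilon}$ is identified with the image of the initial support under the stochastic flow. First I would note that the exponential weight in the representation formula \eqref{C-2-2} is strictly positive, so $f_t^{n,\varepsilon}(\varphi_t^{n,\varepsilon}(x,v)) \neq 0$ if and only if $f^{in,\varepsilon}(x,v) \neq 0$; combined with the fact that $\varphi_t^{n,\varepsilon}$ is a $\mathcal{C}^m$-diffeomorphism (by Lemma 3.1 and Theorem 3.2 of \cite{Chow} within our framework), this yields
\[
\mbox{supp}\, f_t^{n,\varepsilon} = \varphi_t^{n,\varepsilon}\big(\mbox{supp}\, f^{in,\varepsilon}\big), \qquad \mbox{a.s.},
\]
and hence
\[
\mathcal{V}^{n,\varepsilon}(t) = \sup\big\{\, |V_t^{n,\varepsilon}(x,v)| : (x,v)\in\mbox{supp}\, f^{in,\varepsilon}\,\big\}, \qquad
\mathcal{X}^{n,\varepsilon}(t) = \sup\big\{\, |X_t^{n,\varepsilon}(x,v)| : (x,v)\in\mbox{supp}\, f^{in,\varepsilon}\,\big\}.
\]

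Next I would invoke $(\mathcal{F}^{\varepsilon}4)$, which places $\mbox{supp}\, f^{in,\varepsilon}$ inside the ball $B_{R_0+1}(0)\subset\bbr^{2d}$, so that $|x|\le R_0+1$ and $|v|\le R_0+1$ for every $(x,v)$ in the initial support. Substituting $|v|^2\le(R_0+1)^2$ into bound $(i)$ of Proposition \ref{P3.2} gives, for each such $(x,v)$,
\[
|V_t^{n,\varepsilon}(x,v)|^2 \le \Big\{(R_0+1)^2 + \phi_M\int_0^t(\gamma+K_s)\exp(\gamma s)\,ds\Big\}\exp(\phi_M t - 2\sigma W_t) = |\mathcal{V}^\infty(t)|^2,
\]
and passing to the supremum over $(x,v)\in\mbox{supp}\, f^{in,\varepsilon}$ yields $|\mathcal{V}^{n,\varepsilon}(t)|\le|\mathcal{V}^\infty(t)|$. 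Substituting both $|x|^2\le(R_0+1)^2$ and $|v|^2\le(R_0+1)^2$ into bound $(ii)$ of Proposition \ref{P3.2} produces exactly $|\mathcal{X}^\infty(t)|^2$ on the right-hand side, and the same supremum argument gives $|\mathcal{X}^{n,\varepsilon}(t)|\le|\mathcal{X}^\infty(t)|$.

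Since every estimate used holds for each fixed $t$, nothing changes when $T=\infty$, so that case is covered at once. There is no real obstacle here: the statement is a direct corollary of Proposition \ref{P3.2}, and the only point requiring a little care is the identification $\mbox{supp}\, f_t^{n,\varepsilon} = \varphi_t^{n,\varepsilon}(\mbox{supp}\, f^{in,\varepsilon})$, which relies on the non-vanishing of the exponential factor in \eqref{C-2-2} together with the diffeomorphism property of the characteristic flow.
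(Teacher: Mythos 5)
Your proposal is correct and follows essentially the same route as the paper: substitute the initial-support bound $|x|,|v|\le R_0+1$ from $(\mathcal{F}^{\varepsilon}4)$ into the pointwise estimates of Proposition \ref{P3.2} and take suprema over the initial support. The only difference is that you spell out the identification $\mbox{supp}\, f_t^{n,\varepsilon} = \varphi_t^{n,\varepsilon}(\mbox{supp}\, f^{in,\varepsilon})$ via the positivity of the exponential factor in \eqref{C-2-2} and the diffeomorphism property, which the paper uses implicitly; this is a welcome clarification rather than a divergence.
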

\begin{proof} It follows from Proposition \ref{P3.2} that
\begin{align*}
\begin{aligned}
|\mathcal{V}^{n,\varepsilon}(t)|^2  &\le \left\{|\mathcal{V}^{n,\varepsilon}(0)|^2 + \phi_M \int_0^t (\gamma +K_s)\exp(\gamma s) ds \right\} \exp(\phi_M t - 2\sigma W_t) \cr
&\le \left\{(R_0+1)^2 + \phi_M \int_0^t (\gamma +K_s)\exp(\gamma s) ds \right\} \exp(\phi_M t - 2\sigma W_t) = |\mathcal{V}^\infty(t)|^2.
\end{aligned}
\end{align*}
This yields the first estimate for velocity support. On the other hand, we also use Proposition \ref{P3.2} to get

\begin{align*}
\begin{aligned}
&|\mathcal{X}^{n,\varepsilon}(t)|^2 \cr
& \le 2\left( |\mathcal{X}^{n,\varepsilon}(0)|^2 + t\int_0^t \left\{|\mathcal{V}^{n,\e}(0)|^2 + \phi_M  \int_0^s (\gamma +K_\tau) \exp(\gamma\tau)d\tau \right\} \exp(\phi_M s - 2\sigma W_s) ds\right)\cr
&\le 2\left( (R_0+1)^2 + t\int_0^t \left\{(R_0 +1)^2 + \phi_M \int_0^s (\gamma +K_\tau) \exp(\gamma\tau)d\tau \right\} \exp(\phi_M s - 2\sigma W_s) ds\right) \cr
&=: |\mathcal{X}^\infty(t)|^2.
\end{aligned}
\end{align*}
\end{proof}
\begin{remark}
Note that $f_t^{n,\varepsilon}$ has compact supports in $x$ and $v$ for every sample path which are bounded uniformly in $n$ and $\varepsilon$.
\end{remark}
\noindent Now, we are ready to state the results on the uniform bound for the sequence $\{f_t^{n,\varepsilon}\}$.
\begin{proposition}\label{P3.3}
For every $n$, $m \in \bbn$ and $ t \in (0,T)$, there exists a nonnegative process $\mathcal{A}^m_t$ which has continuous sample paths and is independent of $n$ and $\varepsilon$ such that
\[\|f_t^{n,\varepsilon}\|_{W^{m,\infty}} \le \mathcal{A}^m_t \cdot \|f^{in,\varepsilon} \|_{W^{m,\infty}}. \] 
\end{proposition}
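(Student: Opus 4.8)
The plan is to differentiate the representation formula \eqref{C-2-2} and reduce the $W^{m,\infty}$-estimate for $f_t^{n,\varepsilon}$ to uniform-in-$(n,\varepsilon)$ bounds for the $\mathcal C^m$-norms of the stochastic flow $\varphi_t^{n,\varepsilon}$, of its inverse, and of the exponential gain factor in \eqref{C-2-2}. The structural fact that makes this work is that the nonlocal field is affine in $v$ with tame $x$-dependence: writing $\rho_g(x):=\int_{\bbr^d} g(x,v)\,dv$ and $j_g(x):=\int_{\bbr^d} vg(x,v)\,dv$ for the local density and momentum of a function $g$, one has
\[
F_a[g](x,v) = -\,(\phi*\rho_g)(x)\,v + (\phi*j_g)(x), \qquad \nabla_v\!\cdot\!F_a[g](x,v) = -d\,(\phi*\rho_g)(x),
\]
so for every multi-index $\beta$, $\|\partial_x^\beta(\phi*\rho_g)\|_{L^\infty}\le\|\partial^\beta\phi\|_{L^\infty}M_0[g]$ and $\|\partial_x^\beta(\phi*j_g)\|_{L^\infty}\le\|\partial^\beta\phi\|_{L^\infty}\sqrt{M_0[g]M_2[g]}$. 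Applying this to $g=f_s^{n-1,\varepsilon}$ and invoking Proposition \ref{P4.1} (for $M_0\equiv1$ and the uniform bound on $M_2^{n-1,\varepsilon}$) and Corollary \ref{C3.1} (for the uniform bound on $|V_s^{n-1,\varepsilon}|$ over the support), every $x$-derivative of $F_a[f_s^{n-1,\varepsilon}]$ that we shall use is dominated by a nonnegative process with continuous sample paths that is independent of $n$ and $\varepsilon$ — and, crucially, never involves derivatives of $f_s^{n-1,\varepsilon}$ itself. This is what will let the estimate close without any recursion in $n$.

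First I would obtain uniform $\mathcal C^m$-bounds for the flow. Differentiating the It\^o system \eqref{C-2-1} in the initial point $(x,v)$, the Jacobian $J_t^{n,\varepsilon}:=D_{(x,v)}\varphi_t^{n,\varepsilon}$ solves a linear matrix SDE whose drift matrix assembles $D_xF_a[f_t^{n-1,\varepsilon}]$, $D_vF_a[f_t^{n-1,\varepsilon}]=-(\phi*\rho_{f_t^{n-1,\varepsilon}})(X_t^{n,\varepsilon})\,I$ and $\tfrac{\sigma^2}{2}I$, and whose diffusion coefficient is the constant matrix $-\sigma$ times the projection onto the velocity block. By the previous paragraph the drift is bounded by a continuous $(n,\varepsilon)$-free process, and since the diffusion coefficient is constant the solution is an explicit matrix stochastic exponential, so Lemmas \ref{LA-1}--\ref{LA-2} give a pathwise bound $\|J_t^{n,\varepsilon}\|\le(\text{continuous }(n,\varepsilon)\text{-free process})$. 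The same lemmas, applied to $\det J_t^{n,\varepsilon}$ — whose logarithm is $\int_0^t\big(-d\,(\phi*\rho_{f_s^{n-1,\varepsilon}})(X_s^{n,\varepsilon})+\tfrac{d\sigma^2}{2}\big)ds-d\sigma W_t$ by Liouville's formula — bound $\det J_t^{n,\varepsilon}$ from above and below and hence bound $\|(J_t^{n,\varepsilon})^{-1}\|$. For $|\beta|\ge2$ the derivative $D^\beta\varphi_t^{n,\varepsilon}$ solves a linear SDE with the same homogeneous part and an inhomogeneity polynomial in the lower-order derivatives $\{D^{\beta'}\varphi_t^{n,\varepsilon}:|\beta'|<|\beta|\}$, with coefficients given by the already-controlled $x$-derivatives of $F_a[f_t^{n-1,\varepsilon}]$; an induction on $|\beta|\le m$ using Lemmas \ref{LA-1}--\ref{LA-2} at each level therefore yields a uniform-in-$(n,\varepsilon)$ continuous-path bound for $\|\varphi_t^{n,\varepsilon}\|_{\mathcal C^m}$ on $\mathrm{supp}\,f^{in,\varepsilon}$, and, via the derivative formulas for the inverse map together with the two-sided bound on $\det J_t^{n,\varepsilon}$, for $\|(\varphi_t^{n,\varepsilon})^{-1}\|_{\mathcal C^m}$ on the support of $f_t^{n,\varepsilon}$, which is uniformly bounded by Corollary \ref{C3.1}.

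Next I would differentiate \eqref{C-2-2}. Writing $f_t^{n,\varepsilon}=\big(f^{in,\varepsilon}\,\mathcal E_t^{n,\varepsilon}\big)\circ(\varphi_t^{n,\varepsilon})^{-1}$ with $\mathcal E_t^{n,\varepsilon}(x,v)=\exp\big[\int_0^t d\,(\phi*\rho_{f_s^{n-1,\varepsilon}})(X_s^{n,\varepsilon}(x,v))\,ds+d\sigma W_t\big]$, the Fa\`a di Bruno and Leibniz formulas express $D^\alpha f_t^{n,\varepsilon}$ ($|\alpha|\le m$) as a finite sum of products of: $\mathcal C^m$-derivatives of $(\varphi_t^{n,\varepsilon})^{-1}$ (controlled above); derivatives of $f^{in,\varepsilon}$ of order $\le m$ (which account for the factor $\|f^{in,\varepsilon}\|_{W^{m,\infty}}$); and derivatives of $\mathcal E_t^{n,\varepsilon}$ of order $\le m$. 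The latter equal $\mathcal E_t^{n,\varepsilon}$ times a polynomial in the $\le m$-th $(x,v)$-derivatives of its exponent, which by the chain rule are polynomials in the $\le m$-th $x$-derivatives of $\phi*\rho_{f_s^{n-1,\varepsilon}}$ (bounded by $\|\phi\|_{\mathcal C^m}$, using $M_0\equiv1$) and the $\le m$-th derivatives of $\varphi_s^{n,\varepsilon}$ (controlled above); moreover $\mathcal E_t^{n,\varepsilon}$ is itself squeezed between $e^{d\sigma W_t}$ and $e^{d\phi_M t+d\sigma W_t}$, uniformly in $n,\varepsilon$, since $0\le\phi*\rho_{f_s^{n-1,\varepsilon}}\le\phi_M$. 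Collecting all factors, taking suprema over $(x,v)\in\mathrm{supp}\,f^{in,\varepsilon}\subseteq B_{R_0+1}(0)$ and over $y$ in the (uniformly bounded) support of $f_t^{n,\varepsilon}$, and pulling out the single factor $\|f^{in,\varepsilon}\|_{W^{m,\infty}}$, we obtain $\|f_t^{n,\varepsilon}\|_{W^{m,\infty}}\le\mathcal A_t^m\,\|f^{in,\varepsilon}\|_{W^{m,\infty}}$ with $\mathcal A_t^m$ a nonnegative process with continuous sample paths depending only on $m$, $T$, $\phi$, $\sigma$, $R_0$ and the uniform moment/support bounds of Proposition \ref{P4.1} and Corollary \ref{C3.1} — in particular on neither $n$ nor $\varepsilon$.

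I expect the main obstacle to be the combinatorial bookkeeping for the higher-order flow estimates: writing down the hierarchy of linear SDEs for $D^\beta\varphi_t^{n,\varepsilon}$, checking that their inhomogeneities involve only strictly lower-order flow derivatives and already-bounded derivatives of $F_a[f_t^{n-1,\varepsilon}]$, and running the stochastic Gr\"onwall/comparison step (Lemmas \ref{LA-1}--\ref{LA-2}) at each level so that the exponential Wiener factors stay uniform in $(n,\varepsilon)$ and the dominating process has continuous sample paths. A secondary point, routine given the smoothness and the preserved polynomial-in-$v$ structure, is that the Stratonovich-to-It\^o passage and the $\mathcal C^m$-diffeomorphism property underlying \eqref{C-2-1}--\eqref{C-2-2} (Lemma 3.1 and Theorem 3.2 of \cite{Chow}) survive differentiation. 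An equivalent route, bypassing \eqref{C-2-2}, is to differentiate the parabolic It\^o equation \eqref{C-1-1} directly: for each $\alpha$ one gets for $D^\alpha f_t^{n,\varepsilon}$ a linear equation of the same type with a source term built from strictly lower-order derivatives and tame coefficients, which one closes by induction on $|\alpha|$ using the stochastic characteristics and Lemmas \ref{LA-1}--\ref{LA-2}; the bookkeeping is the same.
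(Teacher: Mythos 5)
Your proposal is correct, but it takes a genuinely different route from the paper's. The paper works at the level of the equation: it applies $\partial_x^\alpha\partial_v^\beta$ to \eqref{C-1} via Kunita's differentiation theorem, so that each derivative $\partial_x^\alpha\partial_v^\beta f_t^{n,\varepsilon}$ solves a transport equation driven by the \emph{same} characteristics $\varphi_t^{n,\varepsilon}$, with damping factor $\exp\big[-\tfrac{d+|\beta|}{d}\int_0^t\nabla_v\cdot F_a[f_s^{n-1,\varepsilon}]\,ds+\sigma(d+|\beta|)W_t\big]$ and a lower-order source $\mathcal{L}_{\alpha,\beta}$ bounded by $C_{\alpha,\beta}(t)\,\|f_t^{n,\varepsilon}\|_{W^{m,\infty}}$; summing over $|\alpha|+|\beta|\le m$ and applying Gr\"onwall closes the estimate, and at no point are derivatives of the flow needed. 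You instead differentiate the representation formula \eqref{C-2-2}, which forces you to control the full $\mathcal{C}^m$ regularity of $\varphi_t^{n,\varepsilon}$ and its inverse through the hierarchy of variational SDEs and a Liouville formula for $\det J_t^{n,\varepsilon}$; in exchange you avoid the Gr\"onwall closure on the solution norm and assemble the bound by pure Leibniz/Fa\`a di Bruno bookkeeping. Both proofs hinge on the same structural observation that you isolate explicitly --- $F_a[g]=-(\phi*\rho_g)v+(\phi*j_g)$ is affine in $v$ and all of its $x$-derivatives are controlled by $\|\phi\|_{\mathcal{C}^m}$ together with the uniformly bounded moments of Proposition \ref{P4.1} and the supports of Corollary \ref{C3.1}, never by derivatives of $f^{n-1,\varepsilon}$ --- and this is what prevents any recursion in $n$ in either version. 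Two cautions on your side. First, the Jacobian equation is not literally a closed-form ``matrix stochastic exponential,'' since the drift and the (constant) diffusion matrix do not commute; what saves you is the block structure --- no noise on the position rows and scalar multiplicative noise $-\sigma I$ on the velocity rows --- so that, exactly as in Proposition \ref{P3.2}, you should apply Lemmas \ref{LA-1}--\ref{LA-2} to the velocity-block rows (where the noise has the exact form $-\sigma(\cdot)\,dW_t$ demanded by the comparison lemma) and integrate the position rows as a pathwise ODE, rather than running the comparison on $|J_t^{n,\varepsilon}|^2$ wholesale. Second, your stated logarithm of $\det J_t^{n,\varepsilon}$ carries an It\^o/Stratonovich correction that should be checked against which form of the flow equation you differentiate, though either convention yields the two-sided exponential bound you need. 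With those adjustments the induction on the order of the flow derivatives goes through and your $\mathcal{A}_t^m$ has the claimed properties; your closing alternative --- differentiating the parabolic It\^o equation \eqref{C-1-1} and inducting on the order --- is essentially the paper's own proof.
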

\begin{proof}
Since the proof is quite lengthy, we postpone it to Appendix C.
\end{proof}
\begin{remark}
It is easy to see that for fixed $t$ and $\omega$, $\mathcal{A}_t^m$ is monotonically increasing with respect to $m$.
\end{remark}
\noindent Next, we prove that the sample paths of approximate solutions become a Cauchy sequence in a suitable functional space.
\begin{proposition}\label{P3.4}
For every $n$ and $t \in (0,T)$, there exists a nonnegative process $\tilde{\mathcal{D}}_t$ which has continuous sample paths and is independent of $n$ and $\varepsilon$ such that
\begin{align}
\begin{aligned} \label{C-5}
& \|f^{n,\varepsilon}_t - f^{n-1,\e}_t\|_{\mathcal{C}^0}^2 + \|\varphi_t^{n,\varepsilon} - \varphi_t^{n-1,\varepsilon}\|_{\mathcal{C}^0}^2 \\
&  \hspace{1cm} \le \tilde{\mathcal{D}}_t  \bigg[ \int_0^t  \Big( \|\varphi_s^{n,\varepsilon} - \varphi_s^{n-1, \varepsilon}\|_{\mathcal{C}^0}^2+ \|f^{n-1,\varepsilon}_s - f^{n-2,\varepsilon}_s\|_{\mathcal{C}^0}^2 \Big) ds \bigg], \quad n \geq 2.
\end{aligned}
\end{align}
\end{proposition}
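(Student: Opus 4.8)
The plan is to mimic the $L^\infty$-stability argument of Theorem \ref{T3.1}, now applied to the two consecutive iterates $(f^{n,\varepsilon},\varphi^{n,\varepsilon})$ and $(f^{n-1,\varepsilon},\varphi^{n-1,\varepsilon})$, working entirely pathwise and keeping track that every random prefactor that appears is a process with continuous sample paths and is uniform in $n$ and $\varepsilon$ by the uniform moment and support bounds of Proposition \ref{P4.1} and Corollary \ref{C3.1} (and the uniform $W^{m,\infty}$-bounds of Proposition \ref{P3.3}, which secure the regularity of the iterates). The structural observation that renders the stochastic part harmless is that, by \eqref{C-2-1}, the velocity characteristic $V^{n,\varepsilon}_t$ solves a \emph{linear} It\^o SDE whose diffusion coefficient is the constant $-\sigma$; consequently differences of flows can be written through the variation-of-constants formula of Lemma \ref{LA-1}, rather than by applying It\^o's formula to a squared difference and estimating a stochastic integral.

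\emph{Step 1: difference of the flows.} Put $Z_t:=\varphi^{n,\varepsilon}_t-\varphi^{n-1,\varepsilon}_t=(Y_t,U_t)$ with $Y_t:=X^{n,\varepsilon}_t-X^{n-1,\varepsilon}_t$, $U_t:=V^{n,\varepsilon}_t-V^{n-1,\varepsilon}_t$, so that $Z_0=0$. Subtracting \eqref{C-2-1} for the $n$-th and $(n-1)$-st flows shows that $U_t$ solves $dU_t=\bigl(\tfrac{\sigma^2}{2}U_t+\Delta F_t\bigr)dt-\sigma U_t\,dW_t$, where $\Delta F_t:=F_a[f^{n-1,\varepsilon}_t](\varphi^{n,\varepsilon}_t)-F_a[f^{n-2,\varepsilon}_t](\varphi^{n-1,\varepsilon}_t)$; since here the drift coefficient $\tfrac{\sigma^2}{2}$ exactly cancels the It\^o correction $\tfrac12\sigma^2$, Lemma \ref{LA-1} gives $U_t=\int_0^t e^{-\sigma(W_t-W_s)}\Delta F_s\,ds$, hence $|U_t|\le\bigl(\sup_{0\le s\le t}e^{-\sigma(W_t-W_s)}\bigr)\int_0^t|\Delta F_s|\,ds$ and $|Y_t|\le\int_0^t|U_s|\,ds$. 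Splitting $\Delta F_t=\bigl(F_a[f^{n-1,\varepsilon}_t](\varphi^{n,\varepsilon}_t)-F_a[f^{n-1,\varepsilon}_t](\varphi^{n-1,\varepsilon}_t)\bigr)+F_a[f^{n-1,\varepsilon}_t-f^{n-2,\varepsilon}_t](\varphi^{n-1,\varepsilon}_t)$, the first summand is $\le L(t)\,|Z_t|$, where $L(t)$ is the Lipschitz constant of $F_a[f^{n-1,\varepsilon}_t]$ on the ball $\{|v|\le\mathcal V^\infty(t)\}$ that contains both flows — a continuous process independent of $n,\varepsilon$, by the explicit form of $\nabla_{x,v}F_a$ and $M^{n-1,\varepsilon}_0=1$ — and the second summand is $\le C(t)\,\|f^{n-1,\varepsilon}_t-f^{n-2,\varepsilon}_t\|_{\mathcal C^0}$ by linearity of $F_a$ in the density and the uniform support bound of Corollary \ref{C3.1}. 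Squaring, taking $\sup_{(x,v)\in\mathrm{supp}f^{in,\varepsilon}}$ and using Cauchy--Schwarz yields the pathwise bound $\|\varphi^{n,\varepsilon}_t-\varphi^{n-1,\varepsilon}_t\|_{\mathcal C^0}^2\le\mathcal B_t\int_0^t\bigl(\|\varphi^{n,\varepsilon}_s-\varphi^{n-1,\varepsilon}_s\|_{\mathcal C^0}^2+\|f^{n-1,\varepsilon}_s-f^{n-2,\varepsilon}_s\|_{\mathcal C^0}^2\bigr)ds$ with $\mathcal B_t$ continuous and independent of $n,\varepsilon$.

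\emph{Step 2: difference of the densities, and conclusion.} By \eqref{C-2-2}, $g^{n,\varepsilon}_t(x,v):=f^{n,\varepsilon}_t(\varphi^{n,\varepsilon}_t(x,v))=f^{in,\varepsilon}(x,v)\,e^{A^{n,\varepsilon}_t(x,v)}$ with $A^{n,\varepsilon}_t=-\int_0^t\nabla_v\!\cdot\!F_a[f^{n-1,\varepsilon}](s,\varphi^{n,\varepsilon}_s)\,ds+d\sigma W_t\le d\phi_M t+d\sigma W_t$, using $0\le-\nabla_v\!\cdot\!F_a[g]\le d\phi_M M_0$. For fixed $y$, set $(x,v)=(\varphi^{n,\varepsilon}_t)^{-1}(y)$ and decompose
\[
f^{n,\varepsilon}_t(y)-f^{n-1,\varepsilon}_t(y)=\bigl(g^{n,\varepsilon}_t-g^{n-1,\varepsilon}_t\bigr)\bigl((\varphi^{n,\varepsilon}_t)^{-1}y\bigr)+\Bigl(g^{n-1,\varepsilon}_t\bigl((\varphi^{n,\varepsilon}_t)^{-1}y\bigr)-g^{n-1,\varepsilon}_t\bigl((\varphi^{n-1,\varepsilon}_t)^{-1}y\bigr)\Bigr).
\]
For the first term, $|e^{A^{n,\varepsilon}_t}-e^{A^{n-1,\varepsilon}_t}|\le e^{\max(A^{n,\varepsilon}_t,A^{n-1,\varepsilon}_t)}|A^{n,\varepsilon}_t-A^{n-1,\varepsilon}_t|$ with the exponential factor uniformly bounded as above, and $|A^{n,\varepsilon}_t-A^{n-1,\varepsilon}_t|$ is handled exactly like $\Delta F$: split the integrand $\nabla_v\!\cdot\!F_a[f^{n-1,\varepsilon}_s](X^{n,\varepsilon}_s)-\nabla_v\!\cdot\!F_a[f^{n-2,\varepsilon}_s](X^{n-1,\varepsilon}_s)$ — which, since $\nabla_v\!\cdot\!F_a[g](x,v)=-d\int\phi(x_*-x)g\,dv_*dx_*$, depends on $x$ only through $\phi$ — into an ``argument shift'' part $\lesssim[\phi]_{Lip}\|\varphi^{n,\varepsilon}_s-\varphi^{n-1,\varepsilon}_s\|_{\mathcal C^0}$ and a ``density change'' part $\lesssim\|f^{n-1,\varepsilon}_s-f^{n-2,\varepsilon}_s\|_{\mathcal C^0}$. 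For the second term, $g^{n-1,\varepsilon}_t$ is $\mathcal C^1$ in its argument with $\mathcal C^1$-norm uniformly bounded in $n,\varepsilon$ (by ($\mathcal{F}^{\varepsilon}1$), smoothness of $\phi$, and a uniform bound on $\nabla\varphi^{n-1,\varepsilon}$), so it is controlled by $\|(\varphi^{n,\varepsilon}_t)^{-1}-(\varphi^{n-1,\varepsilon}_t)^{-1}\|_{\mathcal C^0}\le\|D(\varphi^{n,\varepsilon}_t)^{-1}\|_{\mathcal C^0}\,\|\varphi^{n,\varepsilon}_t-\varphi^{n-1,\varepsilon}_t\|_{\mathcal C^0}$ with a uniform bound on the inverse Jacobian. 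Taking $\sup_y$, squaring, inserting the Step 1 estimate for $\|\varphi^{n,\varepsilon}_t-\varphi^{n-1,\varepsilon}_t\|_{\mathcal C^0}^2$, and adding back the Step 1 inequality, one gathers all the continuous $n,\varepsilon$-independent prefactors into a single process $\tilde{\mathcal{D}}_t$ and arrives at \eqref{C-5}.

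\emph{Main obstacle.} The delicate point is Step 2, and within it the estimate $\|(\varphi^{n,\varepsilon}_t)^{-1}-(\varphi^{n-1,\varepsilon}_t)^{-1}\|_{\mathcal C^0}\le\tilde{\mathcal C}_t\,\|\varphi^{n,\varepsilon}_t-\varphi^{n-1,\varepsilon}_t\|_{\mathcal C^0}$ with a prefactor uniform in $n$ and $\varepsilon$: this requires uniform control of the (inverse) Jacobians of the stochastic flows, i.e.\ a variational-equation analogue of Proposition \ref{P3.2} (where again the constant diffusion coefficient lets one invoke Lemma \ref{LA-1}), together with the care that the unbounded exponentials $e^{d\sigma W_t}$ and $e^{-\sigma(W_t-W_s)}$ in \eqref{C-2-2} enter only through $|e^A-e^B|\le e^{\max(A,B)}|A-B|$, so that the resulting prefactor $\tilde{\mathcal{D}}_t$, though random and growing in $t$, retains continuous sample paths and is independent of the iteration index.
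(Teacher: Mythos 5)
Your overall strategy coincides with the paper's --- the paper proves Proposition \ref{P3.4} by re-running the stability argument of Theorem \ref{T3.1} with the deterministic support and regularity bounds replaced by the $n,\varepsilon$-uniform ones from Proposition \ref{P4.1}, Corollary \ref{C3.1} and Proposition \ref{P3.3} --- but you deviate in two places. For the flow difference, your variation-of-constants route (solving the linear It\^o SDE for $U_t=V^{n,\varepsilon}_t-V^{n-1,\varepsilon}_t$ via Lemma \ref{LA-1}, exploiting that the It\^o correction cancels the $\tfrac{\sigma^2}{2}U_t$ drift) is a clean and valid alternative to the paper's computation, which applies It\^o's formula to the squared difference and then invokes the comparison Lemma \ref{LA-2}; both produce the same Gr\"onwall structure with the same exponential-of-$W$ prefactors, and your splitting of $\Delta F$ matches the paper's $\mathcal I_{231},\mathcal I_{232},\mathcal I_{233}$ decomposition.

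Where you make life harder than necessary is Step 2. Decomposing at a fixed point $y$ through the two \emph{inverse} flows forces you to control $\|(\varphi^{n,\varepsilon}_t)^{-1}-(\varphi^{n-1,\varepsilon}_t)^{-1}\|_{\mathcal C^0}$ and $\|g^{n-1,\varepsilon}_t\|_{\mathcal C^1}$, hence uniform bounds on the Jacobians $D\varphi^{n-1,\varepsilon}_t$ and $D(\varphi^{n,\varepsilon}_t)^{-1}$; you correctly flag this as the main obstacle but leave it as a sketch, and the paper never establishes such bounds. The point you are missing is that your second term equals $f^{n-1,\varepsilon}_t\bigl(\varphi^{n-1,\varepsilon}_t((\varphi^{n,\varepsilon}_t)^{-1}y)\bigr)-f^{n-1,\varepsilon}_t(y)$, so upon writing $y=\varphi^{n,\varepsilon}_t(x,v)$ it is bounded by $\|f^{n-1,\varepsilon}_t\|_{W^{1,\infty}}\,\|\varphi^{n,\varepsilon}_t-\varphi^{n-1,\varepsilon}_t\|_{\mathcal C^0}$ with no inverse Jacobian at all; the uniform prefactor $\|f^{n-1,\varepsilon}_t\|_{W^{1,\infty}}\le\|f^{in}\|_{W^{1,\infty}}\mathcal A^1_t$ is exactly what Proposition \ref{P3.3} provides, and this is how the paper (through the $\mathcal I_{22}$ term in the proof of Theorem \ref{T3.1}) closes the estimate. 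As written, your argument therefore contains one genuinely unproved ingredient --- the variational-equation bound on the (inverse) Jacobians, uniform in $n$ and $\varepsilon$; it is provable by the same constant-diffusion-coefficient trick you use in Step 1, but the substitution above removes the need for it entirely and reduces your proof to the paper's.
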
 
\begin{proof} 
Since the proof is almost the same as that of Theorem \ref{T3.1}, we only point out some differences. In the proof of Theorem \ref{T3.1}, we just replace $\mathcal{R}(t)$, $\mathcal{P}(t)$,  $\max(\|f^{in}\|_{L^\infty}, \|\tilde{f}^{in}\|_{L^\infty})$ and $\max(\|f_t\|_{W^{1,\infty}}, \|\tilde{f}_t\|_{W^{1,\infty}})$ by $\mathcal{X}^\infty(t)$, $\mathcal{V}^\infty(t)$, $\|f^{in}\|$ and $\|f^{in}\|_{W^{1,\infty}} \mathcal{A}_t^1$, respectively. Then it becomes our desired estimate and hence, we can actually get

\begin{align*}
&\|f_t^{n,\e} - f^{n-1,\e}_t\|_{\mathcal{C}^0}^2 + \|\varphi_t^{n,\e} - \varphi_t^{n-1,\e}\|_{\mathcal{C}^0}^2\\
&\hspace{0.4cm} \le \mathcal{B}_t^1  \int_0^t  \mathcal{C}_s^1 \Big( \|\varphi_s^{n,\e} - \varphi_s^{n-1,\e}\|_{\mathcal{C}^0}^2+ \|f_s^{n,\e} - f_s^{n-1,\e}\|_{\mathcal{C}^0}^2 \Big )ds \\
& \hspace{0.6cm} + (1+2\|f^{in}\|_{W^{1,\infty}}\mathcal{A}_t^1) \mathcal{B}_t^2  \left( \ \int_0^t  \mathcal{C}_s^2( \|\varphi_s^{n,\e} - \varphi_s^{n-1,\e}\|_{\mathcal{C}^0}^2+ \|f_s^{n-1,\e} - f_s^{n-2,\e}\|_{\mathcal{C}^0}^2)ds  \right)\\
& \hspace{0.4cm} \le \tilde{\mathcal{D}}_t \int_0^t  \Big( \|\varphi_s^{n,\e} - \varphi_s^{n-1,\e}\|_{\mathcal{C}^0}^2+ \|f_s^{n-1,\e} - f_s^{n-2,\e}\|_{\mathcal{C}^0}^2\Big)ds,  
\end{align*}
where

\begin{align*}
&\mathcal{B}_t^1 := 6\left[1+ t \left(d\|\phi\|_{W^{1,\infty}}\|f^{in}\|_{L^\infty} \exp(d\phi_Mt + d\sigma W_t)\right)^2\right],\\
&\mathcal{C}_t^1 := (1 + (4\mathcal{X}^\infty(t)\mathcal{V}^\infty(t))^{2d}),\\
&\mathcal{B}_t^2 := 1+2\|\phi\|_{W^{1,\infty}} \exp\left(4\sigma \sup_{0 \le s \le t} |W_s|\right),\\
&\mathcal{C}_t^2 := 1 + \mathcal{V}^\infty(t)(4\mathcal{X}^\infty(t) \mathcal{V}^\infty(t))^d,\\
&\tilde{\mathcal{D}}_t := \mathcal{B}_t^1 \left(\sup_{0 \le s \le t}\mathcal{C}_s^1\right) + \left(1+2\|f^{in}\|_{W^{1,\infty}}\mathcal{A}_t^1)\right) \mathcal{B}_t^2 \left(\sup_{0 \le s \le t}\mathcal{C}_s^2\right).
\end{align*}
This gives the desired result.

\end{proof}
For each $t$ and $\omega\in \Omega$, we define
\[ \Delta_{n}^\varepsilon(t,\omega) := \|f_t^{n,\varepsilon} - f_t^{n-1,\varepsilon} \|_{\mathcal{C}^0}^2 + \|\varphi_t^{n,\varepsilon} - \varphi_t^{n-1,\varepsilon} \|_{\mathcal{C}^0}^2 . \]
\begin{corollary} \label{C3.2} The functional $\Delta_{n}^\varepsilon(t)$ satisfies
\[\Delta_n^\varepsilon(t,\omega) \le \frac{(\mathcal{K}(\omega) t)^n}{n!}, \quad \mbox{for each} \ \  t \in [0,T] \quad \mbox{and a.s.  } \omega \in \Omega,  \]
where $\mathcal{K} = \mathcal{K}(\omega)$ is a nonnegative random variable.
\end{corollary}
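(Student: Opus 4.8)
The plan is to feed the one-step estimate of Proposition \ref{P3.4} into the recursive Gr\"onwall-type lemma \ref{L4.1}(1), working pathwise in $\omega$. First I would observe that the right-hand side of \eqref{C-5} is controlled by $\Delta_n^\varepsilon$ and $\Delta_{n-1}^\varepsilon$ themselves: since $\|\varphi_s^{n,\varepsilon}-\varphi_s^{n-1,\varepsilon}\|_{\mathcal{C}^0}^2\le\Delta_n^\varepsilon(s)$ and $\|f^{n-1,\varepsilon}_s-f^{n-2,\varepsilon}_s\|_{\mathcal{C}^0}^2\le\Delta_{n-1}^\varepsilon(s)$, Proposition \ref{P3.4} yields, for all $\varepsilon\in(0,1)$ and $n\ge2$,
\[
\Delta_n^\varepsilon(t)\ \le\ \tilde{\mathcal{D}}_t\int_0^t\bigl(\Delta_n^\varepsilon(s)+\Delta_{n-1}^\varepsilon(s)\bigr)\,ds,\qquad t\in[0,T].
\]
Then I would fix $T\in(0,\infty)$ and a single sample path $\omega$. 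The process $\tilde{\mathcal{D}}_t$ is an explicit expression in $\sup_{0\le s\le t}|W_s|$, $W_t$ and exponentials thereof, together with the $n$- and $\varepsilon$-independent quantities $\mathcal{A}_t^1$, $\mathcal{X}^\infty(t)$, $\mathcal{V}^\infty(t)$ of Proposition \ref{P3.3} and Corollary \ref{C3.1}; in particular it has continuous sample paths, so $C(\omega):=\sup_{0\le t\le T}\tilde{\mathcal{D}}_t(\omega)$ is finite for a.e.\ $\omega$ and is independent of $\varepsilon$. This reduces the above to the deterministic recursion $\Delta_n^\varepsilon(t)\le C(\omega)\int_0^t\Delta_{n-1}^\varepsilon(s)\,ds+C(\omega)\int_0^t\Delta_n^\varepsilon(s)\,ds$, valid for $n\ge2$ and $t\in[0,T]$.

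Next, I would relabel $b_k:=\Delta_{k+1}^\varepsilon$ so that this recursion holds from $k=1$ with ``initial term'' $b_0=\Delta_1^\varepsilon$, and apply Lemma \ref{L4.1}(1) with $A=0$ and $B=C=C(\omega)$. The only hypothesis to check is that $b_0=\Delta_1^\varepsilon$ is a nonnegative continuous function that is bounded on $[0,T]$, which is true for a.e.\ $\omega$; crucially, the bound on $\sup_{[0,T]}\Delta_1^\varepsilon$ can be taken independent of $\varepsilon$, because $f^{1,\varepsilon}_t$ and $\varphi^{1,\varepsilon}_t$ are estimated through the $\varepsilon$-uniform bounds of Propositions \ref{P4.1}--\ref{P3.2}, Corollary \ref{C3.1} and the normalization $(\mathcal{F}^{\varepsilon}1)$. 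Lemma \ref{L4.1}(1) then furnishes $\Lambda(\omega)\ge0$, depending only on $C(\omega)$ and this $\varepsilon$-uniform bound, with $b_k(t)\le(\Lambda(\omega)t)^k/k!$; reinstating the index and enlarging $\Lambda(\omega)$ to absorb the estimate for $\Delta_1^\varepsilon$ gives the asserted inequality $\Delta_n^\varepsilon(t)\le(\mathcal{K}(\omega)t)^n/n!$, in which $\mathcal{K}$ is a nonnegative random variable (its measurability is clear, since $\tilde{\mathcal{D}}$ and $\Delta_1^\varepsilon$ are jointly measurable and continuous in $t$) that does not depend on $\varepsilon$.

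The part I expect to require genuine care, as opposed to routine bookkeeping, is exactly this $\varepsilon$-independence of $\mathcal{K}(\omega)$: it is what the $n$- and $\varepsilon$-uniform estimates of the previous subsection were set up to provide — the process $\mathcal{A}_t^m$ in Proposition \ref{P3.3}, the support bounds $\mathcal{X}^\infty$, $\mathcal{V}^\infty$ in Corollary \ref{C3.1}, and the $\varepsilon$-free form of $\tilde{\mathcal{D}}_t$ in Proposition \ref{P3.4}. Once these are invoked, neither $C(\omega)$ nor the bound on $\Delta_1^\varepsilon$ retains any $\varepsilon$-dependence, and the conclusion follows. The residual points — the index shift $n\mapsto n+1$ forced by Proposition \ref{P3.4} being stated only for $n\ge2$, and the almost-sure finiteness of $\sup_{[0,T]}\tilde{\mathcal{D}}_t$ (immediate from the continuity of Brownian sample paths) — are routine.
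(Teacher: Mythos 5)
Your proposal follows essentially the same route as the paper: rewrite the estimate of Proposition \ref{P3.4} as the recursion $\Delta_n^\varepsilon(t)\le\tilde{\mathcal{D}}_t\int_0^t(\Delta_n^\varepsilon+\Delta_{n-1}^\varepsilon)\,ds$, bound $\tilde{\mathcal{D}}_t$ pathwise by its supremum $D(\omega)$ over $[0,T]$ using continuity of sample paths, and invoke the Gr\"onwall-type Lemma \ref{L4.1}(1). Your additional remarks on the index shift, the $\varepsilon$-independence of the constants, and the measurability of $\mathcal{K}$ only make explicit what the paper leaves implicit, so the argument is correct and matches the paper's proof.
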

\begin{proof}

It follows from Proposition \ref{P3.4} that 
\[\Delta_{n+1}^\varepsilon(t) \le \tilde{\mathcal{D}}_t \left( \int_0^t (\Delta_n^\varepsilon(s) + \Delta_{n+1}^\varepsilon(s)) ds \right). \]
Since $\tilde{\mathcal{D}}_t$ is a nonnegative process with continuous sample paths, there exists a nonnegative random variable $D = D(\omega)$ such that
\[\sup_{0 \le t \le T} \tilde{\mathcal{D}}_t(\omega) \le D(\omega)<\infty, \quad \mbox{for each} \quad \omega \in \Omega. \]
Thus, we can use the Gr\"onwall-type lemma in Lemma \ref{L4.1} to deduce
\[\Delta_n(t,\omega) \le \frac{(\mathcal{K}(\omega)t)^n}{n!}, \quad \mbox{for each} \ \  t \in [0,T], \ \omega \in \Omega,  \]
where $\mathcal{K} = \mathcal{K}(\omega)$ depends on $D(\omega)$.
\end{proof}

\begin{remark}\label{R3.3}
Corollary \ref{C3.2} implies that for every $\omega$, 
\[ f_t^{n,\varepsilon}(\omega) \to f_t^\varepsilon(\omega) \quad \mbox{in}~\mathcal{C}([0,T] \times \bbr^{2d}). \]
Since $f_t^{n,\varepsilon}$ is $\mathcal{F}_t$-adapted (where $\mathcal{F}_t$ is a filtration generated by the Wiener process) and $f_t^\e$ is a pointwise limit of $f^{n,\varepsilon}$ over $\Omega$, we have $f$ is $\mathcal{F}_t$-adapted. Moreover, we have a uniform boundedness of $f_t^{n,\varepsilon}$ in $L^\infty([0,T];W^{m,p}( \bbr^{2d}))$ for any $p \in [1,\infty)$.  By the property of reflexive Banach space, there exists a subsequence $\{f^{n_k,\varepsilon}(\omega)\} \subseteq \{f^{n,\varepsilon}(\omega)\}$ which is weakly convergent to $\tilde{f}_t(\omega)$ in $L^\infty([0,T];W^{m,p}( \bbr^{2d}))$ for each $\omega \in \Omega$ and every $p \in [1,\infty)$. Since we have already a strong convergence in the lower order, we can conclude that $f_t^\e(\omega) = \tilde{f}_t(\omega)$.  However, we can not proceed further, since it is not clear whether $f_t^\e$ satisfies the equation \eqref{A-2} at this moment. This is due to the noise term in the right-hand side of \eqref{C-1}. It is not certain whether the Stratonovich integral of $f_t^\e$ can be defined or not. In addition, even if the noise term can be well-defined, it is also not clear whether the Stratonovich integral of $f^{n,\varepsilon}$ converges to that of $f_t^\e$ or not.
 \end{remark}
\subsection{Proof of Theorem \ref{T2.3}} 
In this subsection, we prove a global existence of a solution to system \eqref{C-1} by showing that the limit of the sequence $\{f_t^{n,\varepsilon}\}$ exists as $n \to \infty$ for each $\varepsilon$, and that this limit is indeed a strong solution to \eqref{B-1} corresponding to the regularized initial datum $f^{in,\varepsilon}$. 

In order to cope with the problems discussed in Remark \ref{R3.3}, we employ a stopping time argument. First, we define a sequence of stopping times 
$\{\tau_M\}_{M \in \bbn}$ as follows:
\begin{align*}
\begin{aligned}
&\tau_M^1(\omega) := \inf\{t\ge 0 \ | \ \mathcal{A}^{k_*}_t(\omega) >M \} \wedge T, \quad \tau_M^2(\omega) := \inf\{t \ge 0 \ | \ \tilde{\mathcal{D}}_t(\omega) > M\} \wedge T,\\
&\tau_M^2(\omega) := \inf\{t \ge 0 \ | \ \mathscr{D}_t(\omega) > M\} \wedge T, \quad \tau_M := \tau_M^1 \wedge \tau_M^2\wedge \tau_M^3,
\end{aligned}
\end{align*}
where $k_* := \max\{k,4\}$ and $\mathscr{D}_t$ is a nonnegative process with continuous sample paths which will be specified later. Now, we verify the existence of regularized solutions step by step. \newline

\noindent $\bullet$ (Step A: The limit $n \to \infty$): First, we obtain the limit function $f_{t\wedge\tau_M}^\e$ which is a classical solution to equation \eqref{A-2} with the regularized initial data, based on the estimates in the previous subsection.\newline

\noindent $\diamond$ (Step A-1: Extracting a limit function): We can find out that for each $n \in \bbn$,
\begin{align*}
&(i)~~ \|f^{n,\varepsilon}_{t \wedge \tau_M}\|_{W^{m,\infty}} \le  M \|f^{in,\varepsilon}\|_{W^{m,\infty}}. \\
&(ii)~~ \|f^{n,\varepsilon}_{t \wedge \tau_M} - f^{n-1,\varepsilon}_{t \wedge \tau_M}\|_{\mathcal{C}^0}^2 + \|\varphi_{t \wedge \tau_M}^{n,\varepsilon} - \varphi_{t \wedge \tau_M}^{n-1,\varepsilon}\|_{\mathcal{C}^0}^2\\
& \hspace{1cm}\le M \Big[ \int_0^t   \Big( \|\varphi_{s \wedge \tau_M}^{n,\varepsilon} - \varphi_{s \wedge \tau_M}^{n-1,\varepsilon}\|_{\mathcal{C}^0}^2+ \|f^{n-1,\varepsilon}_{s \wedge \tau_M} - f^{n-2,\varepsilon}_{s \wedge \tau_M}\|_{\mathcal{C}^0}^2 \Big) ds \Big].
\end{align*}
Thus, we can use the same argument as in Corolllary \ref{C3.2} to yield that as $n \to \infty$, there exists a limit function $ f_{t \wedge \tau_M}^\varepsilon$ such that, up to a subsequence, 
\begin{align*}
&f^{n,\varepsilon}_{t \wedge \tau_M} \to f_{t \wedge \tau_M}^\varepsilon \quad \mbox{in} \quad L^\infty(\Omega ;\mathcal{C}( [0,T]\times\bbr^{2d})),\\
&f^{n,\varepsilon}_{t \wedge \tau_M} \rightharpoonup f_{t \wedge \tau_M}^\varepsilon \quad \mbox{in} \quad L^\infty(\Omega \times [0,T]; W^{m,p}(\bbr^{2d})), \quad \forall \ p \in [1,\infty).
\end{align*}

\vspace{0.4cm}

\noindent $\diamond$ (Step A-2: Verification of relation \eqref{B-0-1}): Now, we need to show that $f_{t \wedge \tau_M}^\varepsilon$ satisfies \eqref{C-1} in the sense of Definition \ref{D2.1}. Since $f_{t\wedge\tau_M}^{n,\varepsilon}$ satisfies \eqref{C-1-1} and conditions of Lemma \ref{L2.2}, it satisfies the following relation:

\begin{align}
\begin{aligned} \label{C-9}
&\int_\Sigma f^{n,\varepsilon}_{t \wedge \tau_M} \psi dz = \int_\Sigma f^{in,\varepsilon} \psi dz + \int_0^t \int_\Sigma f^{n,\varepsilon}_{s\wedge \tau_M} \left( v \cdot \nabla_x \psi + \left(F_a[f^{n-1,\varepsilon}_{s \wedge \tau_M}] + \frac{1}{2}\sigma^2 v \right)\cdot \nabla_v \psi \right) dzds\\
& \hspace{0.7cm}+\frac{1}{2}\sigma^2 \int_0^t \int_\Sigma v f^{n,\varepsilon}_{s \wedge \tau_M}  \cdot (D_v^2 \psi)v dzds  - \sigma\int_0^t \int_\Sigma f^{n,\varepsilon}_{s\wedge \tau_M} v \cdot \nabla_v \psi dz dW_s, 
\end{aligned}
\end{align}
where $\Sigma := \bbr^{2d}$ and $dz = dvdx$. \newline

Next, our job is to pass $n \to \infty$ in the integral relation \eqref{C-9} to derive an integral relation \eqref{B-0-1} for $f^\varepsilon_{t \wedge \tau_M}$. For this, note that the $x$- and $v$-supports of $f^{n,\varepsilon}_{t \wedge \tau_M}$ and $f^\varepsilon_{t \wedge \tau_M}$ are uniformly bounded by $|\mathcal{X}_{t \wedge \tau_M}^\infty|$ and $|\mathcal{V}_{t \wedge \tau_M}^\infty|$ (see Corollary \ref{C3.1}). Moreover, we can find out that $|\mathcal{X}_{t \wedge \tau_M}^\infty|$ and $|\mathcal{V}_{t \wedge \tau_M}^\infty|$ are bounded by $\mathcal{A}_{t \wedge \tau_M}^{k_*}$, and hence by $M$. We combine the strong convergence on the lower order with these facts to yield
\begin{align*}
&(i)~~ \int_\Sigma (f^{n,\varepsilon}_{t \wedge \tau_M}-f_{t \wedge \tau_M}^\varepsilon) \psi dz \longrightarrow 0. \\
&(ii)~~  \int_0^t \int_\Sigma (f^{n,\varepsilon}_{s\wedge \tau_M}-f_{s \wedge \tau_M}^\varepsilon) \left( v \cdot \nabla_x \psi + \left(F_a[f^{n-1,\varepsilon}_{s \wedge \tau_M}] + \frac{1}{2}\sigma^2 v \right)\cdot \nabla_v \psi \right) dzds\longrightarrow 0. \\
&(iii)~~ \int_0^t \int_\Sigma f_{s \wedge \tau_M}^\varepsilon \left( F_a [ f_{s\wedge\tau_M}^{n-1,\varepsilon}] - F_a [f^{\varepsilon}_{s\wedge\tau_M}] \right) \nabla_v \psi dzds \longrightarrow 0. \\
&(iv)~~ \frac{1}{2}\sigma^2 \int_0^t \int_\Sigma v(f^{n,\varepsilon}_{s \wedge \tau_M}-f_{s \wedge \tau_M}^\varepsilon)\cdot (D_v^2 \psi)v dzds \longrightarrow 0,
\end{align*}
uniformly in $\omega$, as $n$ goes to infinity.  \newline

Now it remains to check with the stochastic integral term in \eqref{C-9}. For this term, one has
\begin{align*}
&\bbe \left[\left(\int_0^t \int_\Sigma (f^{n,\varepsilon}_{s \wedge \tau_M} - f_{s \wedge \tau_M}^\varepsilon) v \cdot \nabla_v \psi dz dW_s\right)^2 \right] = \bbe \left[\int_0^t \left(\int_\Sigma (f^{n}_{s\wedge \tau_M} - f_{s\wedge \tau_M}^\varepsilon) v \cdot \nabla_v \psi dz \right)^2 ds \right]\\
& \hspace{1cm} \le \bbe\left[\int_0^t \|f^{n,\varepsilon}_{s \wedge \tau_M} - f_{s \wedge \tau_M}^\varepsilon\|_{\mathcal{C}^0}^2 ds \right] \left(\int_\Sigma |v \cdot \nabla_v \psi|dz \right)^2 \longrightarrow 0, \quad \mbox{as}\quad n \to \infty.
\end{align*}

\noindent This $L^2$-convergence over $\Omega$ implies that there exists a subsequence $\{f^{n_l,\varepsilon}_{t \wedge \tau_M}\}$ such that
\[\left(\int_0^t \int_\Sigma f^{n_l,\varepsilon}_{s \wedge \tau_M} v \cdot \nabla_v \psi dz dW_s\right)(\omega) \longrightarrow \left(\int_0^t \int_\Sigma f_{s \wedge \tau_M}^\varepsilon v \cdot \nabla_v \psi dz dW_s\right)(\omega),  \]
for a.s. $\omega$, as $l$ goes to infinity. Thus, we can conclude that for a.s. $\omega \in \Omega$, $f_{s \wedge \tau_M}^\varepsilon$ satisfies
\begin{align*}
\int_\Sigma f_{t \wedge \tau_M}^\varepsilon \psi dz &= \int_\Sigma f^{in,\varepsilon} \psi dz - \int_0^t \int_\Sigma f_{s\wedge \tau_M}^\varepsilon \left( v \cdot \nabla_x \psi + \left(F_a[f^\varepsilon_{s \wedge \tau_M}] + \frac{1}{2}\sigma^2 v \right)\cdot \nabla_v \psi \right) dzds\\
&-\frac{1}{2}\sigma^2 \int_0^t \int_\Sigma v f^\varepsilon_{s \wedge \tau_M}\cdot (D_v^2 \psi)v dzds  + \int_0^t \int_\Sigma f^\varepsilon_{s\wedge \tau_M} v \cdot \nabla_v \psi dz dW_s,
\end{align*}
for every $\psi \in \mathcal{D}( \bbr^{2d})$. One also has $f_{t\wedge\tau_M}^\varepsilon$ is a $\mathcal{F}_t$-semimartingale. Here, we use Lemma \ref{L3.1} to obtain that $f_{t\wedge\tau_M}^\varepsilon$ satisfies \eqref{A-2} in the sense of distribution. \newline

\noindent $\bullet$ (Step B: The limit $\e \to 0$):~Here, we address the convergence of solutions to the regularized system \eqref{C-0}. Since $k^* \ge 4$, one uses the Sobolev embedding theorem to get $f_{t\wedge\tau_M}^\varepsilon \in L^\infty(\Omega; \mathcal{C}([0,T];\mathcal{C}^{3,\delta}(\bbr^{2d})))$. Thus, it follows from Lemma \ref{L2.2} and Remark 3.1 that $f_{t\wedge\tau_M}^\varepsilon$ becomes a classical solution to \eqref{C-0} corresponding to the regularized initial datum $f^{in, \e}$ .\newline

\noindent $\diamond$ (Step B-1: Extracting a limit function):~ Note that the strong convergence in Step A implies that the $x$-support and the $v$-support of $f_{t\wedge\tau_M}^\e$ are bounded by $\mathcal{X}^\infty$ and $\mathcal{V}^\infty$, respectively, uniformly in $\e$. Thus, we can follow the stability estimate in Theorem \ref{T3.1} to get
\begin{align}
\begin{aligned}  \label{D-1}
&\|f_{t\wedge \tau_M}^\varepsilon-f_{t\wedge\tau_M}^{\varepsilon'} \|_{\mathcal{C}^0}^2 + \| \varphi_{t\wedge \tau_M}^\varepsilon-\varphi_{t\wedge\tau_M}^{\varepsilon'} \|_{\mathcal{C}^0}^2  \\
& \hspace{2cm} \le \mathscr{D}_{t\wedge\tau_M} \| f^{in, \varepsilon} - f^{in, \varepsilon'}\|_{\mathcal{C}^0}^2 \le M \|f^{in, \varepsilon} - f^{in, \varepsilon'} \|_{\mathcal{C}^0}^2,
\end{aligned}
\end{align}
where $\mathscr{D}_t$ can be obtained if $\mathcal{R}(t)$, $\mathcal{P}(t)$,  $\max(\|f^{in}\|_{L^\infty}, \|\tilde{f}^{in}\|_{L^\infty})$ and $\max(\|f_t\|_{W^{1,\infty}}, \|\tilde{f}_t\|_{W^{1,\infty}})$ in the formulation of $\mathcal{D}_t$ from Theorem \ref{T3.1} are substituted by $\mathcal{X}^\infty(t)$, $\mathcal{V}^\infty(t)$, $\|f^{in}\|$ and $\|f^{in}\|_{W^{1,\infty}} \mathcal{A}_t^1$, respectively.\newline

Since $f^{in, \varepsilon}$ converges uniformly to $f^{in}$, it follows from the stability estimate \eqref{D-1} that there exists  $f_{t\wedge\tau_M}$ such that 
\[ 
f^\varepsilon_{t \wedge \tau_M} \to f_{t \wedge \tau_M} \quad \mbox{in} \quad L^\infty(\Omega;\mathcal{C}( [0,T]\times\bbr^{2d})). 
\]

\noindent Moreover, it follows from the weak convergence and ($\mathcal{F}^\e 1$) that

\[
\|f^{\varepsilon}_{t \wedge \tau_M}\|_{W^{k,\infty}} \le \mathcal{A}_t^k \|f^{in, \varepsilon}\|_{W^{k,\infty}} \le  M \|f^{in}\|_{W^{k,\infty}}. 
\]

\noindent Hence, we can follow the arguments in Step A to yield that $f_{t\wedge\tau_M}$ satisfies relation \eqref{B-0-2} and hence \eqref{B-0-1}. Moreover, $f_{t\wedge\tau_M}$ is compactly supported in $x$ and $v$. \newline

\noindent $\diamond$~(Step B-2: Regularity of a strong solution): Now, we prove that $f_{t\wedge\tau_M}$ has the desired regularity. Since $f_{t\wedge\tau_M}^\e$ is a classical solution to \eqref{A-2} with initial data $f^{in,\e}$, it can be uniquely written as 
\[ f_{t\wedge\tau_M}^\e(\varphi_{t\wedge\tau_M}^\e(x,v)) = f^{in,\e}(x,v)\exp\left[ -\int_0^{t\wedge\tau_M} \nabla_v \cdot F_a[f_s](\varphi_s^\e)ds + d\sigma  W_{t\wedge\tau_M}\right],  \] 
(for detail, we refer to Appendix B). Since we also obtain the uniform convergence of the characteristics $\varphi_{t\wedge\tau_M}^\e$ as $\e \to 0$ from \eqref{D-1}, the solution $f_{t\wedge\tau_M}$ satisfies the following relation:
\begin{equation}\label{D-2} 
f_{t\wedge\tau_M}(\varphi_{t\wedge\tau_M}(x,v)) = f^{in}(x,v)\exp\left[ -\int_0^{t\wedge\tau_M} \nabla_v \cdot F_a[f_s](\varphi_s)ds + d\sigma  W_{t\wedge\tau_M}\right],  
\end{equation}
and the limit $\varphi_{t\wedge\tau_M}(x,v) = (X_{t\wedge\tau_M}(x,v), V_{t\wedge\tau_M})$ is a solution to the following SDE:
\[
\begin{cases}
\displaystyle X_{t\wedge\tau_M} = x+  \int_0^{t\wedge\tau_M} V_s ds,\\
\displaystyle V_{t\wedge\tau_M} = v + \int_0^{t\wedge\tau_M}\left(F_a[f_s](X_s, V_s) \right)ds + \int_0^{t\wedge\tau_M}\sigma (v_c-V_t) \circ dW_s.
\end{cases}
\]
Since the kernel $F_a[f_t]$ is smooth, $\varphi_{t\wedge\tau_M}(x,v)$ can be shown to be a $\mathcal{C}^m$-diffeomorphism for any $m\in\bbn$, and so is its inverse $\psi_{t\wedge\tau_M}(x,v) :=( \varphi_t(x,v))^{-1}$. Thus, if we write
\[ 
f_{t\wedge\tau_M}(x,v) = f^{in}(\psi_{t\wedge\tau_M}(x,v))\exp\left[ -\int_0^{t\wedge\tau_M} \nabla_v \cdot F_a[f_s](\varphi_s(\psi_{t\wedge\tau_M}(x,v))ds + d\sigma  W_{t\wedge\tau_M}\right],  
\]
it directly follows from the regularity of $f^{in}$ and $\psi_t$ that $f_{t\wedge\tau_M}$ has the desired regularity. \newline

\noindent $\bullet$~(Step C: Properties of a strong solution):~ We recall several properties of regularized solutions. First, it is obvious from \eqref{D-2} that

\[
\|f_{t\wedge\tau_M}\|_{L^\infty} \le \|f^{in}\|_{L^\infty} \exp(d\phi_M t\wedge\tau_M + d\sigma W_{t\wedge\tau_M})
\]
Since $f_{t\wedge\tau_M}^\varepsilon$ is a classical solution to \eqref{C-0} corresponding to the regularized initial datum $f^{in, \e}$, Lemma \ref{L2.3} gives

\[ 
M_2^\varepsilon (t\wedge\tau_M) \le M_2^\e(0) \exp(-2\phi_m t \wedge\tau_M - 2\sigma W_{t\wedge\tau_M}),
\]
and the strong convergence together with compact supports gives

\[ 
M_2 (t\wedge\tau_M) \le M_2(0) \exp(-2\phi_m t \wedge\tau_M - 2\sigma W_{t\wedge\tau_M}).
\]

\noindent Moreover, it is obvious that
\[ \tau_M(\omega) \to T \quad \mbox{as $M \to \infty$ for a.s. $\omega$}. \] 
Thus, we choose a sufficiently large $M$ for each $\omega \in \Omega$ such that $f_{t\wedge\tau_M }(\omega)$ satisfies the relation \eqref{B-0-1} on $[0,T]$.\newline

\noindent For the expectation estimates of the solution, we use Fatou's lemma to get, for any $p \in (1,\infty)$,
\begin{align*}
\begin{aligned}
&\mathbb{E} \|f_t\|_{L^\infty} \le \liminf_{M \to \infty} \mathbb{E}\|f_{t\wedge\tau_M}\|_{L^\infty} \le \liminf_{M \to \infty} \|f^{in}\|_{L^\infty} \mathbb{E}\Big[ \exp(d\phi_M t\wedge\tau_M + d\sigma W_{t\wedge\tau_M}) \Big]\\
&  \hspace{0.4cm} = \liminf_{M \to \infty}\|f^{in}\|_{L^\infty} \mathbb{E}\Big[ \exp\left( d\sigma W_{t \wedge \tau_M} - \frac{p(d\sigma)^2}{2}t \wedge \tau_M \right)\exp \left(\Big(d\phi_M+ \frac{p(d\sigma)^2}{2}\Big) t\wedge\tau_M \right) \Big]\\
& \hspace{0.4cm} \le \liminf_{M \to \infty}\|f^{in}\|_{L^\infty} \mathbb{E} \Big[\exp \left(\frac{p}{p-1}\Big(d\phi_M+ \frac{p(d\sigma)^2}{2}\Big) t\wedge\tau_M \right)  \Big]^{(p-1)/p}\\
& \hspace{0.4cm} =\|f^{in}\|_{L^\infty} \exp\left(\Big(d\phi_M+ \frac{p(d\sigma)^2}{2}\Big) t \right),
\end{aligned}
\end{align*}
where we used the fact $X_t = \exp(aW_t - a^2 t /2) $ is a martingale, H\"older's inequality and Lebesgue's dominated convergence theorem. Then we take the limit $ p \to 1$ on both sides to obtain the desired result. For the dissipation of the second velocity moment, we use a similar argument to get the desired estimate.

\section{Conclusion}\label{sec:5}
In this paper, we presented a global existence of strong solutions and their asymptotic emergent dynamics for the stochastic kinetic Cucker-Smale equation perturbed by a multiplicative white noise. For a global well-posedness, we first derive a sequence of classical solutions to the stochastic kinetic C-S equation with regularized initial data. Then, using the properties of classical solutions, we obtained the existence of a strong solution  corresponding to the original initial data and asymptotic emergent stochastic dynamics of strong solutions. Of course, there are lots of interesting issues to be addressed in a future work, e.g., a global existence of weak solutions, emergent dynamics under other types of random perturbations and zero noise limit, etc. These topics will be discussed in future works.

\newpage

\appendix

\section{Elementary lemmas}
\setcounter{equation}{0}
In this appendix, we provide two useful lemmas used in previous sections. First, we begin with estimate on a variant of geometric brownian motion.
\begin{lemma}\label{LA-1}
Let $\{X_t\}_{t \ge 0}$ be a solution satisfying the following Cauchy problem:
\[
\begin{cases}
\displaystyle dX_t = (a_t + b_t X_t) dt + c X_t dW_t, \quad  t > 0, \\
\displaystyle X_0 = x \ge 0,
\end{cases}
\]
where $\{a_t\}_{t \ge 0}$ and $\{b_t\}_{t \ge 0}$ are stochastic processes with continuous sample paths, and $c$ is a constant. Then one has
\[
X_t = x \exp\Big[ \int_0^t \Big(b_s - \frac{c^2}{2} \Big) ds + cW_t \Big]  + \int_0^t a_s \exp\Big[ \int_s^t \Big(b_\tau - \frac{c^2}{2} \Big) d\tau + c(W_t - W_s) \Big] ds.
\]
\end{lemma}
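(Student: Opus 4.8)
The plan is to verify the claimed formula directly by It\^o's product rule, treating the right-hand side as an \emph{ansatz} and checking it solves the linear SDE. First I would introduce the integrating factor
\[
Z_t := \exp\Big[ -\int_0^t \Big( b_s - \frac{c^2}{2} \Big) ds - cW_t \Big],
\]
which is the stochastic exponential associated to the homogeneous part of the equation. A direct application of It\^o's formula gives
\[
dZ_t = Z_t\Big[ -\Big(b_t - \frac{c^2}{2}\Big) dt - c\, dW_t + \frac{c^2}{2}\, dt \Big] = -Z_t\big( b_t\, dt + c\, dW_t \big) + c^2 Z_t\, dt,
\]
so that in particular the quadratic covariation satisfies $d\langle X, Z\rangle_t = -c^2 X_t Z_t\, dt$.

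Next I would compute $d(X_t Z_t)$ by the It\^o product rule $d(X_t Z_t) = X_t\, dZ_t + Z_t\, dX_t + d\langle X, Z\rangle_t$. Substituting $dX_t = (a_t + b_t X_t)\, dt + c X_t\, dW_t$ and the expression for $dZ_t$, the terms $-b_t X_t Z_t\, dt$, $-c X_t Z_t\, dW_t$, $+ b_t X_t Z_t\, dt$, $+ c X_t Z_t\, dW_t$, $+c^2 X_t Z_t\, dt$ and $-c^2 X_t Z_t\, dt$ cancel pairwise, leaving simply
\[
d(X_t Z_t) = a_t Z_t\, dt.
\]
Integrating from $0$ to $t$ and using $Z_0 = 1$, $X_0 = x$, this yields $X_t Z_t = x + \int_0^t a_s Z_s\, ds$, hence
\[
X_t = Z_t^{-1}\Big( x + \int_0^t a_s Z_s\, ds \Big).
\]
Since $Z_t^{-1} = \exp\big[\int_0^t (b_s - c^2/2)\, ds + cW_t\big]$ and $Z_t^{-1} Z_s = \exp\big[\int_s^t (b_\tau - c^2/2)\, d\tau + c(W_t - W_s)\big]$, expanding the product gives exactly the asserted representation.

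The only genuine subtlety — rather than an obstacle — is justifying the manipulations: one must ensure that $X_t Z_t$ is a well-defined semimartingale so the product rule applies, which follows because $a_t, b_t$ have continuous sample paths (hence are locally bounded) and $c$ is constant, so both $X_t$ and $Z_t$ are continuous semimartingales with locally bounded coefficients; the stochastic integrals are then legitimate and the cancellation above is valid pathwise in the It\^o sense. Uniqueness of the solution (if desired) follows from the standard Lipschitz theory for linear SDEs, so the computed expression is \emph{the} solution. I expect no real difficulty here; the content is entirely the bookkeeping of the It\^o product rule, and the statement is the classical variation-of-constants formula for linear SDEs specialized to the given coefficient structure.
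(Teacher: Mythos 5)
Your proof is correct: the integrating-factor computation, the cancellations in the It\^o product rule, and the final expansion of $Z_t^{-1}Z_s$ all check out, and the argument runs in the right direction (from an assumed solution $X$ to the representation formula). The paper itself gives no proof but defers to Example 19.7 of Schilling--Partzsch, which is precisely this variation-of-constants argument, so your approach is essentially the same as the referenced one.
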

\begin{proof} 
The proof is exactly given in Example 19.7 from \cite{S-P}. So, we refer to \cite{S-P} for its proof.

\end{proof}
\begin{lemma}\label{LA-2}
\emph{(Comparision principle)}
Suppose that two stochastic proceeses $\{X_t\}_{t \ge 0}$ and $\{Y_t\}_{t \ge 0}$ satisfy 
\begin{align*}
&dX_t \le (a_t + bX_t)dt + cX_t dW_t, \quad X_0 = x \ge 0,\\
&dY_t  = (a_t + bY_t)dt + cY_t dW_t, \quad Y_0 = x,
\end{align*}
where $\{a_t\}_{t \ge 0}$ is a stochastic process with continuous sample paths. Then, we have
\[X_t \le Y_t, \quad \forall t \ge0. \]
\end{lemma}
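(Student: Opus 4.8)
The plan is to establish the comparison principle in Lemma~\ref{LA-2} by reducing the inequality for $X_t$ to an ordinary differential/integral inequality after removing the multiplicative noise via an integrating factor. First I would introduce the exponential martingale-type process $Z_t := \exp\left( -cW_t + \frac{c^2}{2}t - bt \right)$ and consider the transformed processes $\widetilde{X}_t := Z_t X_t$ and $\widetilde{Y}_t := Z_t Y_t$. Applying It\^o's product rule to $Z_t X_t$, the drift term $bX_t\,dt$ and the diffusion term $cX_t\,dW_t$ are exactly cancelled by the corresponding contributions coming from $dZ_t$ and the quadratic covariation $d\langle Z, X\rangle_t$ (this is the standard computation underlying Lemma~\ref{LA-1}); what survives is the clean relation $d\widetilde{X}_t \le Z_t a_t\,dt$, and similarly $d\widetilde{Y}_t = Z_t a_t\,dt$ with equality. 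Here I would note that $Z_t$ is a strictly positive process with continuous sample paths, so multiplication by $Z_t$ preserves inequalities and the stochastic calculus manipulations are justified pathwise after the It\^o expansion.

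Next I would subtract the two identities: setting $D_t := \widetilde{X}_t - \widetilde{Y}_t = Z_t(X_t - Y_t)$, we get $dD_t \le 0$ in the sense that $D_t - D_0 = \widetilde{X}_t - \widetilde{X}_0 - (\widetilde{Y}_t - \widetilde{Y}_0) \le \int_0^t Z_s a_s\,ds - \int_0^t Z_s a_s\,ds = 0$. Since $D_0 = Z_0(X_0 - Y_0) = 1\cdot(x - x) = 0$, it follows that $D_t \le 0$ for all $t \ge 0$, hence $Z_t(X_t - Y_t) \le 0$, and dividing by the strictly positive quantity $Z_t$ yields $X_t \le Y_t$ for all $t \ge 0$, which is the claim. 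A small point to make precise is what ``$dX_t \le (a_t + bX_t)\,dt + cX_t\,dW_t$'' means: I would interpret it (as is standard) as the statement that $X_t = x + \int_0^t(a_s + bX_s)\,ds + \int_0^t cX_s\,dW_s - A_t$ for some nondecreasing adapted process $A_t$ with $A_0 = 0$; then the integrating-factor computation gives $\widetilde{X}_t = 1\cdot x + \int_0^t Z_s a_s\,ds - \int_0^t Z_s\,dA_s$, and the last term is nonpositive since $Z_s \ge 0$ and $dA_s \ge 0$.

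The main obstacle, such as it is, is not analytical depth but making the It\^o computation for the integrating factor transparent and ensuring the noise and linear-drift cancellations are written correctly; once $Z_t$ is chosen so that $Z_t X_t$ has no $dW_t$ term and no $bX_t\,dt$ term, the argument is immediate. I would therefore devote the bulk of the short proof to displaying $dZ_t = Z_t\left( -c\,dW_t + \frac{c^2}{2}\,dt - b\,dt \right) + \frac{1}{2}Z_t c^2\,dt = Z_t\left( -c\,dW_t + c^2\,dt - b\,dt \right)$ and then $d(Z_tX_t) = Z_t\,dX_t + X_t\,dZ_t + d\langle Z,X\rangle_t$, carrying out the cancellation explicitly, and concluding as above. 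Alternatively, if one prefers to avoid re-deriving the integrating factor, one may simply invoke Lemma~\ref{LA-1}: writing $X_t$ with the extra nonincreasing correction $-A_t$ and $Y_t$ via the explicit formula of Lemma~\ref{LA-1}, the difference of the two representations is manifestly $\le 0$ because every kernel $\exp\left[ \int_s^t(b - \frac{c^2}{2})\,d\tau + c(W_t - W_s)\right]$ is positive and the only discrepancy is the subtraction of a positive integral against $dA_s$.
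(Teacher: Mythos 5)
Your proof is correct, but it follows a genuinely different route from the paper's. The paper perturbs the initial datum, introducing $Y_t^\delta$ with $Y_0^\delta = x+\delta$, sets $Z_t^\delta := Y_t^\delta - X_t$, applies It\^o's formula to $\ln Z_t^\delta$ to conclude $Z_t^\delta \ge \delta \exp\{\cdots\} \ge 0$, and finally lets $\delta \to 0$ using the explicit representation of Lemma \ref{LA-1}. You instead multiply by the integrating factor $Z_t = \exp(-cW_t + \tfrac{c^2}{2}t - bt)$, which kills both the linear drift and the noise, reducing everything to $d(Z_tX_t) \le Z_t a_t\,dt$ versus $d(Z_tY_t) = Z_t a_t\,dt$ and a trivial subtraction. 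Your version buys two things the paper's argument glosses over: first, you make precise what the differential inequality means (existence of a nondecreasing compensator $A_t$ with $dX_t = (a_t+bX_t)dt + cX_t dW_t - dA_t$), which is exactly how the inequality arises where the lemma is invoked (nonpositive terms are dropped from an exact identity); second, you avoid applying $\ln$ to the difference process $Z_t^\delta$ before its positivity has been established --- strictly speaking the paper's step needs a stopping-time localization at the first zero of $Z_t^\delta$, whereas your $Z_t$ is manifestly positive, and you also dispense with the $\delta$-limit entirely. The paper's approach, on the other hand, reuses the representation formula of Lemma \ref{LA-1} directly and requires no discussion of what the inequality between differentials means. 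Your alternative closing remark (comparing the two representation formulas from Lemma \ref{LA-1} directly, with the extra $-\int_0^t(\cdots)\,dA_s$ term) is essentially the same observation in integrated form and is equally valid.
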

\begin{proof}
Let $\{Y_t^\delta\}_{t \ge 0}$, ($\delta>0$) be a stochastic process satisfying
\[
\begin{cases}
dY_t^\delta  = (a_t + bY_t^\delta)dt + cY_t^\delta dW_t,~~t > 0, \\
Y_0^\delta = x+\delta,
\end{cases}
\]
and we set
\[ Z_t^\delta := Y_t^\delta -X_t. \]
Then, we have
\[dZ_t^\delta \ge bZ_t^\delta dt + cZ_t^\delta dW_t, \quad t > 0 \quad \mbox{and} \quad Z_0 = \delta, \quad t = 0.\]

\noindent We use It\^o's lemma to get

\[d(\ln Z_t^\delta) = \frac{dZ_t^\delta}{Z_t^\delta} - \frac{1}{2} \frac{1}{(Z_t^\delta)^2} (dZ_t^\delta) \cdot (dZ_t^\delta) \ge \left(b_t-\frac{c^2}{2} \right) dt + c~dW_t.  \]
Again, we integrate the above relation to get
\[Z_t^\delta \ge \delta  \exp \left\{ \int_0^t \left(b_s-\frac{c^2}{2} \right) ds + cW_t \right\} \geq 0. \]
This yields
\[ X_t \le Y_t^\delta \quad \mbox{for all $ t \ge 0$}. \]
It follows from the representation formula in Lemma \ref{LA-1} that
\begin{align*}
\begin{aligned}
Y_t^\delta &= (x+\delta) \exp\left\{  \int_0^t \left(b_s-\frac{c^2}{2} \right) ds + cW_t\right\} + \int_0^t a_s \exp\Big[ \int_s^t \Big(b_\tau - \frac{c^2}{2} \Big) d\tau + c(W_t - W_s) \Big] ds, \\
Y_t &= x \exp\left\{  \int_0^t \left(b_s-\frac{c^2}{2} \right) ds + cW_t\right\} + \int_0^t a_s \exp\Big[ \int_s^t \Big(b_\tau - \frac{c^2}{2} \Big) d\tau + c(W_t - W_s) \Big] ds.
\end{aligned}
\end{align*}
This yields the desired result:
\[Y_t = \liminf_{\delta \to 0} Y_t^\delta \ge X_t. \]
\end{proof}

\section{A proof of Theorem \ref{T3.1}}
\setcounter{equation}{0}

First, we define $\eta$ and $\tilde\eta$ as follows:
\begin{align*}
\eta_t(x,v) &:= f^{in}(x,v) - \int_0^t \eta_s(x,v)(\nabla_v \cdot F_a[f_s])(\varphi_s)ds + \sigma d\int_0^t\eta_s(x,v)  \circ dW_s,\\
\tilde\eta_t(x,v) &:= \eta_t ((\varphi_t)^{-1}).
\end{align*}
\noindent We use the generalized It\^o's formula from Theorem 3.3.2 in \cite{H.K} to obtain that $\tilde\eta_t$ satisfies the relation \eqref{A-2}. Since the classical solutions can become measure-valued solutions and the uniqueness of measure-valued solutions is guaranteed in Theorem \ref{T2.1}, we have 
\[  \tilde\eta_t(x,v) =  f_t(x,v). \]
Moreover, since $\eta$ is a geometric Brownian motion, a unique classical solution $f$ corresponding to the initial datum $f^{in}$ can be represented by
\[\eta_t(x,v) = f_t(\varphi_t(x,v)) = f^{in}(x,v)\exp\left[ -\int_0^t \nabla_v \cdot F_a[f_s](\varphi_s)ds + d\sigma  W_t\right].  \]
Now, we consider another classical solution $\tilde{f}$ corresponding to the initial datum $\tilde{f}^{in}$ and the associated stochastic flow $\tilde{\varphi}_t(x,v)$. Moreover, we set
\begin{align*}
&\mathcal{R}(t) := \sup \left\{ |x| \ : \ f_t(x,v) \neq 0 \quad \mbox{or} \quad \tilde{f}_t(x,v) \neq 0 \quad \mbox{for some } \ v \in \bbr^d \right\},\\
&\mathcal{P}(t) := \sup \left\{ |v| \ : \ f_t(x,v) \neq 0 \quad \mbox{or} \quad \tilde{f}_t(x,v) \neq 0 \quad \mbox{for some } \ x \in \bbr^d \right\}.
\end{align*}

Then, we claim

\begin{align}
\begin{aligned} \label{T3-1.1}
&(i)~~ \|f_t - \tilde{f}_t\|_{L^{\infty}}^2 \le \mathcal{B}_t^1 \Big[  \|f^{in} - \tilde{f}^{in}\|_{L^\infty}^2 +  \int_0^t  \mathcal{C}_s^1 \Big( \|\varphi_s - \tilde{\varphi}_s\|_{L^\infty}^2+ \|f_s - \tilde{f}_s\|_{L^{\infty}}^2 \Big )ds  \Big] \\
&\hspace{3cm} + 2\max(\|f_t\|_{W^{1,\infty}}, \|\tilde{f}_t\|_{W^{1,\infty}}) \|\varphi_t - \tilde{\varphi}_t\|_{L^\infty}^2,  \\
&(ii)~~\|\varphi_t - \tilde{\varphi}_t\|_{L^\infty}^2\le \mathcal{B}_t^2  \left( \ \int_0^t  \mathcal{C}_s^2( \|\varphi_s - \tilde{\varphi}_s\|_{L^\infty}^2+ \|f_s - \tilde{f}_s\|_{L^{\infty}}^2)ds  \right),
\end{aligned}
\end{align}
where $\mathcal{B}_t^i$ and $\mathcal{C}_t^i$ ($i=1,2$) are nonnegative processes which have continuous sample paths.\newline

\noindent (i)~~ First, we derive the $L^\infty$-estimates for classical solutions:
\begin{align*}
\begin{aligned}
&f(\varphi_t(x,v)) - \tilde{f}_t(\varphi(x,v)) = \left(f(\varphi_t(x,v)) -\tilde{f}(\tilde{\varphi}_t(x,v))\right) + \left(\tilde{f}(\tilde{\varphi}_t(x,v)) - \tilde{f}(\varphi_t(x,v))\right)\\
& \hspace{1cm} =: \mathcal{I}_{21} + \mathcal{I}_{22}.
\end{aligned}
\end{align*}

\vspace{0.2cm}

\noindent $\bullet$~(Estimate of $\mathcal{I}_{21}$):~By direct estimate, one has
\begin{align*}
\begin{aligned}
\mathcal{I}_{21} &=f^{in}(x,v)\exp\left[ -\int_0^t \nabla_v \cdot F_a[f_s](\varphi_s)ds + d\sigma  W_t\right]  -\tilde{f}^{in}(x,v)\exp\left[ -\int_0^t \nabla_v \cdot F_a[\tilde{f}_s](\varphi_s)ds + d\sigma  W_t\right]\\
&\le \|f^{in} - \tilde{f}^{in}\|_{L^\infty}\exp\left[ -\int_0^t \nabla_v \cdot F_a[f_s](\varphi_s)ds + d\sigma  W_t\right]\\
&\quad + \|\tilde{f}^{in}\|_{L^\infty}\exp(d\sigma W_t) \left[ \exp\bigg( -\int_0^t \nabla_v \cdot F_a[f_s](\varphi_s)ds\bigg) - \exp\bigg( -\int_0^t \nabla_v \cdot F_a[\tilde{f}_s](\tilde{\varphi}_s)ds\bigg)\right]\\
&\le \|f^{in} - \tilde{f}^{in}\|_{L^\infty}\exp\left( d\phi_M t + d\sigma  W_t\right)\\
&\quad + \|\tilde{f}^{in}\|_{L^\infty}\exp(d\phi_M t + d\sigma W_t) \left| \int_0^t \left(\nabla_v \cdot F_a[f_s](\varphi_s) - \nabla_v \cdot F_a[\tilde{f}_s](\tilde{\varphi}_s)\right)ds\right|,
\end{aligned}
\end{align*}
where we used the mean-value theorem, and we have
\begin{align*}
\begin{aligned}
&\left|\nabla_v \cdot F_a[f_s](\varphi_s) - \nabla_v \cdot F_a[\tilde{f}_s](\tilde{\varphi}_s)\right|\\
& \hspace{0.5cm} \le d \int_{\bbr^{2d}}\left|\phi(x_* - X_s ) - \phi(x_* - \tilde{X}_s)\right| f_s dv_*dx_* + d\int_{\bbr^{2d}}\phi(x_* - \tilde{X}_t) |f_s -\tilde{f}_s|dv_*dx_*\\
& \hspace{0.5cm} \le d\phi_{Lip}|X_s - \tilde{X}_s| + d\phi_M (4\mathcal{R}(s)\mathcal{P}(s))^d\|f_s - \tilde{f}_s\|_{L^\infty}.
\end{aligned}
\end{align*}
Thus, we get
\begin{align*}
\mathcal{I}_{21} &\le \|f^{in} - \tilde{f}^{in}\|_{L^\infty}\exp\left( d\phi_M t + d\sigma  W_t\right) + \|\tilde{f}^{in}\|_{L^\infty}\exp(d\phi_M t + d\sigma W_t)  \int_0^t d\phi_{Lip}|X_s - \tilde{X}_s|ds\\
&+\|\tilde{f}^{in}\|_{L^\infty}\exp(d\phi_M t + d\sigma W_t) \int_0^t d\phi_M (4\mathcal{R}(s)\mathcal{P}(s))^d\|f_s - \tilde{f}_s\|_{L^\infty} ds.
\end{align*}

\noindent $\bullet$~(Estimate of $\mathcal{I}_{22}$): By direct estimate, one has
\[ \mathcal{I}_{22} \le \|\tilde{f}_t\|_{W^{1,\infty}}\|\varphi_t -\tilde{\varphi}_t\|_{L^\infty}.\]
Hence, we take the supremum over $(x,v)\in\bbr^d \times \bbr^d$, and use Young's inequality and the Cauchy-Schwarz inequality to get
\begin{align*}
\|f_t - \tilde{f}_t\|_{L^\infty} &\le 2\mathcal{I}_{21}^2 + 2\mathcal{I}_{22}^2 \le 6\|f^{in} - \tilde{f}^{in}\|_{L^\infty}^2\exp\left( 2d\phi_M t + 2d\sigma  W_t\right)\\
&+ 6\|\tilde{f}^{in}\|_{L^\infty}^2\exp\left( 2d\phi_M t + 2d\sigma  W_t\right)\left(\int_0^t  d\phi_{Lip}|X_s - \tilde{X}_s| ds\right)^2\\
&+ 6\|\tilde{f}^{in}\|_{L^\infty}^2\exp\left( 2d\phi_M t + 2d\sigma  W_t\right) \left(\int_0^t d\phi_M (4\mathcal{R}(s)\mathcal{P}(s))^d\|f_s - \tilde{f}_s\|_{L^\infty} ds\right)^2\\
&+2\|\tilde{f}_t\|_{W^{1,\infty}}^2\|\varphi_t -\tilde{\varphi}_t\|_{L^\infty}^2\\
&\le  6\|f^{in} - \tilde{f}^{in}\|_{L^\infty}^2\exp\left( 2d\phi_M t + 2d\sigma  W_t\right)\\
&+ 6t\left(d\phi_{Lip}\|\tilde{f}^{in}\|_{L^\infty}\exp\left( d\phi_M t + d\sigma  W_t\right)\right)^2 \int_0^t  \|\varphi_s - \tilde{\varphi}_s\|_{L^\infty}^2 ds\\
&+ 6t\left(\|\tilde{f}^{in}\|_{L^\infty}\exp\left( d\phi_M t + d\sigma  W_t\right) \right)^2\int_0^t d\phi_M (4\mathcal{R}(s)\mathcal{P}(s))^{2d}\|f_s - \tilde{f}_s\|_{L^\infty}^2 ds\\
&+2\|\tilde{f}_t\|_{W^{1,\infty}}^2\|\varphi_t -\tilde{\varphi}_t\|_{L^\infty}^2.
\end{align*}

\noindent Setting

\begin{align*}
&\mathcal{B}_t^1 := 6\left[1+ t \left(d\|\phi\|_{W^{1,\infty}}\max(\|f^{in}\|_{L^\infty}, \|\tilde{f}^{in}\|_{L^\infty}) \exp(d\phi_Mt + d\sigma W_t)\right)^2\right],\\
&\mathcal{C}_t^1 := (1 + (4\mathcal{R}(t)\mathcal{P}(t))^{2d}),
\end{align*}
we obtain the desired result (i) of \eqref{T3-1.1}.\\

\noindent (ii)~~Now, we estimate $\|\varphi_t -\tilde{\varphi}_t\|_{L^\infty}$. It follows from \eqref{char} and It\^o's lemma that

\begin{align*}
d|V_t - \tilde{V}_t|^2 &= 2(V_t - \tilde{V}_t) d(V_t - \tilde{V}_t) + d(V_t - \tilde{V}_t)d(V_t-\tilde{V}_t)\\
&=2\left( \underbrace{(V_t - \tilde{V}_t) (F_a[f_t](\varphi_t) - F_a[\tilde{f}_t](\tilde{\varphi}_t))}_{=:\mathcal{I}_{23}.} + \sigma^2 |V_t - \tilde{V}_t|^2 \right)dt - 2\sigma |V_t - \tilde{V}_t|^2 dW_t.
\end{align*}
Here, we have

\begin{align*}
\mathcal{I}_{23} &\le \int_{\bbr^{2d}}\left| \phi(x_* - X_t) - \phi(x_* - \tilde{X}_t)\right| |(v_* - V_t)\cdot(V_t - \tilde{V}_t)| f_t  dv_*dx_*\\
&\quad - \int_{\bbr^{2d}}\phi(x_* - \tilde{X}_t)|V_t - \tilde{V}_t|^2 f_t dv_*dx_*\\
&+ \int_{\bbr^{2d}}\phi(x_*-\tilde{X}_t))|(v_* - \tilde{V}_t)\cdot(V_t -\tilde{V}_t)| |f_t - \tilde{f}_t|dv_*dx_*\\
&=:\mathcal{I}_{231} + \mathcal{I}_{232} + \mathcal{I}_{233}. 
\end{align*}

\noindent We separately estimate the $\mathcal{I}_{23i}$'s as follows:

\begin{align*}
&\mathcal{I}_{231} \le 2\phi_{Lip}\mathcal{P}(t)|X_t - \tilde{X}_t| |V_t - \tilde{V}_t|  \le 2\phi_{Lip}\mathcal{P}(t) \|\varphi_t - \tilde{\varphi}_t\|_{L^\infty}^2, \quad \mathcal{I}_{232} \le 0,\\
&\mathcal{I}_{233} \le 2\phi_M\mathcal{P}(t) |V_t - \tilde{V}_t| \|f_t - \tilde{f}_t\|_{L^\infty} (4\mathcal{R}(t) \mathcal{P}(t))^d\\
&\hspace{0.7cm}\le \phi_M \mathcal{P}(t)(4\mathcal{R}(t) \mathcal{P}(t))^d\left(\|f_t - \tilde{f}_t\|_{L^\infty}^2  + \|\varphi_t - \tilde{\varphi}_t\|_{L^\infty}\right)^2.\\
\end{align*}

\noindent Thus, by Lemma \ref{LA-1} we get

\begin{align*}
|V_t - \tilde{V}_t|^2 &\le \phi_M \int_0^t  \mathcal{P}(s)(4\mathcal{R}(s) \mathcal{P}(s))^d\|f_s - \tilde{f}_s\|_{L^\infty}^2 \exp(-2\sigma (W_t - W_s))ds\\
&\quad +(2\phi_{Lip} + \phi_M) \int_0^t \mathcal{P}(s)(4\mathcal{R}(s) \mathcal{P}(s))^d\|\varphi_s - \tilde{\varphi}_s\|_{L^\infty}^2 \exp(-2\sigma (W_t - W_s))ds\\
&\le 2\|\phi\|_{W^{1,\infty}} \exp\left(4\sigma \sup_{0 \le s \le t} |W_s|\right) \\
& \quad \times \int_0^t \mathcal{P}(s)(4\mathcal{R}(s) \mathcal{P}(s))^d(\|\varphi_s - \tilde{\varphi}_s\|_{L^\infty}^2 + \|f_s - \tilde{f}_s\|_{L^\infty}^2 )ds.
\end{align*}

\noindent Moreover, it is easy to obtain that
\[d|X_t - \tilde{X}_t|^2 = 2(X_t - \tilde{X}_t) \cdot(V_t - \tilde{V}_t) \le 2\|\varphi_t - \tilde{\varphi}_t\|_{L^\infty}^2. \]

\noindent Thus, if we define $\mathcal{B}_t^2$ and $\mathcal{C}_t^2$ as
\[
\mathcal{B}_t^2 := 1+2\|\phi\|_{W^{1,\infty}} \exp\left(4\sigma \sup_{0 \le s \le t} |W_s|\right), \quad \mathcal{C}_t^2 := 1 + \mathcal{P}(t)(4\mathcal{R}(t) \mathcal{P}(t))^d,
\]
then (ii) of \eqref{T3-1.1} can be fulfilled with the above $\mathcal{B}_t^2$ and $\mathcal{C}_t^2$.\newline

\noindent Therefore, we add $(i)$ in \eqref{T3-1.1}$_1$ to $(1+2\max(\|f_t\|_{W^{1,\infty}}, \|\tilde{f}_t\|_{W^{1,\infty}}))$ times $(ii)$ in \eqref{T3-1.1}$_2$ and obtain
\begin{align*}
&\|f_t - \tilde{f}_t\|_{L^\infty}^2 + \|\varphi_t - \tilde{\varphi}_t\|_{L^\infty}^2\\
& \hspace{0.2cm} \le \mathcal{B}_t^1\Big[  \|f^{in} - \tilde{f}^{in}\|_{L^\infty}^2 +  \int_0^t  \mathcal{C}_s^1 \Big( \|\varphi_s - \tilde{\varphi}_s\|_{L^\infty}^2+ \|f_s - \tilde{f}_s\|_{L^{\infty}}^2 \Big )ds  \Big] \\
& \hspace{0.5cm} + (1+2\max(\|f_t\|_{W^{1,\infty}}, \|\tilde{f}_t\|_{W^{1,\infty}}) \mathcal{B}_t^2  \left( \ \int_0^t  \mathcal{C}_s^2( \|\varphi_s - \tilde{\varphi}_s\|_{L^\infty}^2+ \|f_s - \tilde{f}_s\|_{L^{\infty}}^2)ds  \right)\\
& \hspace{0.2cm} \le \mathcal{B}_t^1  \|f^{in} - \tilde{f}^{in}\|_{L^\infty}^2 + \tilde{\mathcal{B}}_t \int_0^t  \Big( \|\varphi_s - \tilde{\varphi}_s\|_{L^\infty}^2+ \|f_s - \tilde{f}_s\|_{L^{\infty}}^2\Big)ds,
\end{align*}
where $\tilde{\mathcal{B}}_t$ is given by
\[\tilde{\mathcal{B}}_t := \mathcal{B}_t^1 \left(\sup_{0 \le s \le t}\mathcal{C}_s^1\right) + \left(1+2\max(\|f_t\|_{W^{1,\infty}}, \|\tilde{f}_t\|_{W^{1,\infty}})\right) \mathcal{B}_t^2 \left(\sup_{0 \le s \le t}\mathcal{C}_s^2\right). \]

\noindent Then, letting $y_t :=  \int_0^t  \Big( \|\varphi_s - \tilde{\varphi}_s\|_{L^\infty}^2+ \|f_s - \tilde{f}_s\|_{L^{\infty}}^2\Big)ds  $, we have
\[dy_t \le \left(\mathcal{B}_t^1 \|f^{in} - \tilde{f}^{in}\|_{L^\infty}^2 + \tilde{\mathcal{B}}_t y_t\right)dt. \]
Then, by Gr\"onwall's lemma we get
\[y_t \le \|f^{in} - \tilde{f}^{in}\|_{L^\infty}^2 \int_0^t \mathcal{B}_s^1  \exp\left(\int_s^t \tilde{\mathcal{B}}_\tau d\tau\right) ds, \]
and this gives
\[
\|f_t - \tilde{f}_t\|_{L^\infty}^2 + \|\varphi_t - \tilde{\varphi}_t\|_{L^\infty}^2\le \|f^{in} - \tilde{f}^{in}\|_{L^\infty}^2 \left[\mathcal{B}_t^1 + \tilde{\mathcal{B}}_t \int_0^t \mathcal{B}_s^1  \exp\left(\int_s^t \tilde{\mathcal{B}}_\tau d\tau\right) ds\right].
\]
\noindent Hence, defining 
\[ \mathcal{D}_t := \mathcal{B}_t^1 + \tilde{\mathcal{B}}_t \int_0^t \mathcal{B}_s^1  \exp\left(\int_s^t \tilde{\mathcal{B}}_\tau d\tau\right) ds, \]
we arrive at the desired estimate.

\section{A proof of Proposition \ref{P3.3}}
\setcounter{equation}{0}
Recall that $f_t^{n,\varepsilon}$ satisfies a differential form:
\[
\partial_t f_t^{n,\varepsilon}  = -v \cdot \nabla_x f_t^{n,\varepsilon} - \nabla_v \cdot (F_a[f_t^{n-1,\varepsilon}]f_t^{n,\varepsilon}) + \sigma \nabla_v \cdot (v f_t^{n,\varepsilon}) \circ \dot{W}_t,
\] 
i.e., it satisfies
\begin{equation} \label{Bp-2}
f_t^{n,\varepsilon} = f^{in,\varepsilon} - \int_0^t v \cdot \nabla_x f_s^{n,\varepsilon} ds  - \int_0^t \nabla_v \cdot (F_a[f_t^{n-1,\varepsilon}]f_t^{n,\varepsilon}) ds + \sigma \int_0^t \nabla_v \cdot (vf_t^{n,\varepsilon}) \circ dW_s.
\end{equation}  
Next, we claim: there exists a nonnegative process $\mathcal{A}^m_t$ with continuous sample paths such that
\[
\|f_t\|_{W^{m,\infty}} \le  \|f^{in} \|_{W^{m,\infty}} \mathcal{A}^m_t. 
\]
In the sequel, we provide $L^{\infty}$-estimate of $f_t$ and its derivatives to provide a proof of Proposition \ref{P3.3}.  \newline

\noindent $\bullet$~(Zeroth-order estimate): It follows the formula \eqref{C-2-2} that
\begin{align*}
f_t^{n,\varepsilon}(\varphi_t^{n,\varepsilon}(x,v)) &= f^{in,\varepsilon}(x,v)\exp \left\{ -\int_0^t \nabla_v \cdot F_a[f_s^{n-1,\varepsilon}](\varphi_s^{n,\varepsilon}(x,v))ds + d \sigma W_t \right\}\\
&\le \|f^{in,\varepsilon} \|_{L^\infty}\exp(d\phi_M t +d \sigma  W_t).
\end{align*}
This implies the zeroth-order estiamte:
\begin{equation}\label{Bp-4}
\|f_t^{n,\varepsilon}\|_{L^\infty} \le \|f^{in,\varepsilon}\|_{L^\infty}\exp(d \phi_M t + d \sigma W_t).
\end{equation}

\vspace{0.2cm}

\noindent $\bullet$~(Higher-order estimates):  Let $\alpha$ and $\beta$ be multi-indices satisfying
\[ 1 \le |\alpha|+|\beta| \le m. \]
Then, we apply $\partial_x^{\alpha} \partial_v^{\beta}$ to the relation \eqref{Bp-2} using Theorem 3.1.2 in \cite{H.K}:
\begin{align}
\begin{aligned}\label{Bp-5}
\partial_x^\alpha \partial_v^\beta f_t^{n,\varepsilon} &= \partial_x^\alpha \partial_v^\beta f^{in,\varepsilon} - \sum_{|\mu_1| \le 1}\binom{\beta}{\mu_1} \int_0^t \partial_v^{\mu_1}(v) \cdot \nabla_x (\partial_x^\alpha \partial_v^{\beta-\mu_1} f_s^{n,\varepsilon})ds \\
&\quad - \sum_{\substack{\mu_2 \le \alpha \\ |\mu_3| \le 1}} \binom{\alpha}{\mu_2}\binom{\beta}{\mu_3} \int_0^t \nabla_v \cdot (\partial_x^{\mu_2}\partial_v^{\mu_3} F_a[f_s^{n-1,\varepsilon}] \partial_x^{\alpha-\mu_2}\partial_v^{\beta-\mu_3} f_s^{n,\varepsilon})ds\\
& \quad + \sigma \sum_{|\mu_4|\le 1}\binom{\beta}{\mu_4} \int_0^t \nabla_v \cdot (\partial_v^{\mu_4} (v) \partial_x^{\alpha}\partial_v^{\beta-\mu_4} f_s^{n,\varepsilon}) \circ dW_s,
\end{aligned}
\end{align}
where we used the relation:
\[ \partial_v^{\mu_3} F_a[f_t^{n-1, \varepsilon}]  = 0, \quad \mbox{for}~|\mu_3| \geq 2. \]

Note that the differentiation equality \eqref{Bp-5} is only true outside a $\mathbb{P}$-zero set in $\Omega$ which depends on $(x,v)$, according to Theorem 3.1.2 in \cite{H.K}. However, we can use the argument in Lemma \ref{L2.2} to obtain that the equality also holds $\mathbb{P} \otimes dx \otimes dv$-a.s. Now, we rearrange the previous relation to obtain
\begin{align}
\begin{aligned}\label{App-B.2}
&\partial_x^\alpha\partial_v^\beta f_t^{n,\varepsilon} =\partial_x^\alpha \partial_v^\beta f^{in,\varepsilon} -\int_0^t \Big[ v \cdot \nabla_x (\partial_x^\alpha \partial_v^\beta f_s^{n,\varepsilon}) + F_a[f_s^{n-1,\varepsilon}] \cdot \nabla_v(\partial_x^\alpha \partial_v^\beta f_s^{n,\varepsilon}) \Big] ds \\
&\hspace{0.7cm} +  \sigma \int_0^t v \cdot \nabla_v (\partial_x^\alpha \partial_v^\beta f_s^{n,\varepsilon}) \circ dW_s -\frac{d+|\beta|}{d} \int_0^t \nabla_v \cdot F_a[f_s^{n-1,\varepsilon}] \partial_x^\alpha \partial_v^\beta f_s^{n,\varepsilon}ds \\
& \hspace{0.7cm} + \sigma(d+|\beta|)\int_0^t \partial_x^\alpha \partial_v^\beta f_s^{n,\varepsilon} \circ dW_s -\int_0^t \mathcal{L}_{\alpha,\beta}(s) ds, \quad \mathbb{P}\otimes dx \otimes dv \mbox{-a.s.},
\end{aligned}
\end{align}
where the process $\mathcal{L}_{\alpha,\beta}$ is given by the following relation:
\begin{align*}
\mathcal{L}_{\alpha,\beta} &:= \sum_{|\mu_1| = 1}\binom{\beta}{\mu_1} \partial_v^{\mu_1}(v) \cdot \nabla_x (\partial_x^\alpha \partial_v^{\beta-\mu_1} f_s^{n,\varepsilon}) + \sum_{0 \neq \mu_2 \le \alpha} \binom{\alpha}{\mu_2} \nabla_v \cdot (\partial_x^{\mu_2} F_a[f_s^{n-1,\varepsilon}]) \partial_x^{\alpha-\mu_2}\partial_v^\beta f_s^{n,\varepsilon}\\
&\hspace{0.2cm}+ \sum_{\substack{0 \neq \mu_2\le \alpha \\ |\mu_3| = 1 }}\binom{\alpha}{\mu_2}\binom{\beta}{\mu_3}\partial_x^{\mu_2} \partial_v^{\mu_3} F_a[f_s^{n-1,\varepsilon}]\cdot \nabla_v (\partial_x^{\alpha-\mu_2}\partial_v^{\beta-\mu_3} f_s^{n,\varepsilon})\\
&\hspace{0.2cm} +  \sum_{0 \neq \mu_2 \le \alpha} \binom{\alpha}{\mu_2}\partial_x^{\mu_2} F_a[f_s^{n-1,\varepsilon}] \cdot \nabla_v( \partial_x^{\alpha-\mu_2}\partial_v^\beta f_s^{n,\varepsilon}).
\end{align*}

\noindent Next, we define $\lambda$ and $\tilde\lambda$ as follows:
\begin{align*}
\lambda_t(x,v) &:= \partial_x^\alpha\partial_v^\beta f^{in,\varepsilon}(x,v) -\frac{d+|\beta|}{d} \int_0^t \lambda_s(x,v)(\nabla_v \cdot F_a[f_s^{n-1,\varepsilon}])(\varphi_s^{n,\varepsilon})ds \\
&\quad + \sigma(d+|\beta|)\int_0^t\lambda_s(x,v)  \circ dW_s -\int_0^t \mathcal{L}_{\alpha,\beta}(\varphi_s^{n,\varepsilon})ds,\\
\tilde\lambda_t(x,v) &:= \lambda_t ((\varphi_t^{n,\varepsilon})^{-1}).
\end{align*}
\noindent By using generalized It\^o's formula from Theorem 3.3.2 in \cite{H.K}, $\tilde\lambda_t$ satisfies the relation \eqref{App-B.2}. Thus, by the uniqueness, 
\[  \tilde\lambda_t = \partial_x^\alpha\partial_v^\beta f_t^{n,\varepsilon}, \]
and we use It\^o's formula on $\lambda_t$ to get
\begin{align*}
&\partial_x^\alpha \partial_v^\beta f_t^{n,\varepsilon}(\varphi_t^{n,\varepsilon}) =\partial_x^\alpha\partial_v^\beta f^{in,\varepsilon}(x,v)\exp\Big[  -\frac{d+|\beta|}{d}\int_0^t \nabla_v \cdot F_a[f_s^{n-1,\varepsilon}](\varphi_s^{n,\varepsilon})ds + \sigma(d+|\beta|)W_t  \Big] \\
&\hspace{1cm} - \int_0^t \exp\Big[ -\frac{d+|\beta|}{d}\int_s^t \nabla_v \cdot F_a[f_{\tau}^{n-1,\varepsilon}](\varphi_\tau^{n,\varepsilon})d\tau + \sigma(d+|\beta|)(W_t-W_s)  \Big]  \\
& \hspace{1.2cm} \times \mathcal{L}_{\alpha,\beta}(s,\varphi_s^{n,\varepsilon})ds.
\end{align*}
For detailed explanation for the above realtion, we refer to the proof of Theorem 3.2 in \cite{Chow}. \newline

\noindent Note that the following estimates hold: 
\begin{itemize}
\item
If $|\beta|=1$, one has
\[|\partial_x^\alpha \partial_v^\beta F_a[f^{n-1,\varepsilon}_t]| \le \|\phi\|_{\mathcal{C}^m}. \]
\item
If $|\alpha| \ge 1$, one gets
\begin{align*}
|\partial_x^\alpha F_a[f^{n-1,\varepsilon}_t](\varphi_t^{n,\varepsilon})| &\le \|\phi\|_{\mathcal{C}^m} \int_{\bbr^{2d}} |v_* \cdot V_t^{n,\varepsilon}| f^{n-1,\varepsilon}_t(x_*,v_*)dv_*dx_* \\
&\le \|\phi\|_{\mathcal{C}^m} (\mathcal{V}^{n-1,\varepsilon}(t)) |V_t^{n,\varepsilon}|\le \|\phi\|_{\mathcal{C}^m} (\mathcal{V}^\infty(t))^2.
\end{align*}
\end{itemize}
We set $C_{\alpha,\beta}(t)$ to be
\[
C_{\alpha,\beta}(t) := \|\phi\|_{\mathcal{C}^m} \left(\sum_{|\mu_1| = 1} \binom{\beta}{\mu_1} + \sum_{0 \ne \mu_2 \le \alpha} \binom{\alpha}{\mu_2} + \sum_{\substack{0\le \mu_2 \le \alpha \\ |\mu_3| =1}}\binom{\alpha}{\mu_2}\binom{\beta}{\mu_3}\right)(1+(\mathcal{V}^\infty(t))^2).
\]
This yields
\[|\mathcal{L}_{\alpha,\beta}(t,\varphi_t^{n,\varepsilon})| \le C_{\alpha,\beta}(t) \|f_t^{n,\varepsilon}\|_{W^{m,\infty}}. \]
Thus, we have
\begin{align}
\begin{aligned}\label{T2-1.3}
\partial_x^\alpha \partial_v^\beta f_t^{n,\varepsilon}(\varphi_t^{n,\varepsilon}) &\le \|\partial_x^\alpha\partial_v^\beta f^{in,\varepsilon}\|_{L^\infty} \exp((d+|\beta|)(\phi_M t + \sigma W_t)\\
&+ \int_0^t \exp((d+|\beta|)\{\phi_M (t-s) + \sigma (W_t-W_s)\}C_{\alpha,\beta}(s) \|f^{n,\varepsilon}_s\|_{W^{m,\infty}}ds.
\end{aligned}
\end{align}
Now, we take the supremum over all characteristic flow, sum \eqref{T2-1.3} over all $1 \le |\alpha|+|\beta| \le m$ and combine this with \eqref{Bp-4} to obtain
\begin{align*}
\begin{aligned}
\|f^{n,\varepsilon}_t\|_{W^{m,\infty}} &\le \|f^{in,\varepsilon}\|_{W^{m,\infty}} \mathcal{M}_t^m  + \mathcal{M}_t^m \int_0^t \Big[ \exp(-(d+m)\phi_M s) \\
&\hspace{0.2cm} \times\sum_{|\alpha|+|\beta| \le m}\exp(-\sigma(d+|\beta|)W_s)C_{\alpha,\beta}(s) \|f^{n,\varepsilon}_s\|_{W^{m,\infty}} \Big] ds,
\end{aligned}
\end{align*}
where the process $\mathcal{M}_t^m$ is given by the following relation:
\[\mathcal{M}_t^m :=  \exp((d+m)\phi_M t) \sum_{|\beta| \le m} \exp(\sigma (d+|\beta|)W_t).\]
Note that $\mathcal{M}_t^m$ is independent of $n$ and $\varepsilon$. We set 
\[ b_n(t) := \|f^{n,\varepsilon}_t\|_{W^{m,\infty}}(\mathcal{M}_t^m)^{-1}. \]
Then, one gets
\[
b_{n+1}(t) \le b_0 + \int_0^t \tilde{\mathcal{N}}_s^m b_{n+1}(s)ds,
\]
where the process $ \tilde{\mathcal{N}}_s^m$ is 
\[ \tilde{\mathcal{N}}_s^m := \left\{\sum_{|\beta|\le m}\exp(\sigma(N+|\beta|)W_s) \right\}\left\{\sum_{|\beta| \le m}\exp(-\sigma(N+|\beta|)W_s)\right\} \left( \sum_{|\alpha|+|\beta|\le m}C_{\alpha,\beta}(s)\right).\]
\noindent Thus, we can use Gr\"onwall's lemma to obtain
\[ \|f^{n,\varepsilon}_t\|_{W^{m,\infty}} \le \|f^{in,\varepsilon}\|_{W^{m,\infty}}\mathcal{A}^m_t, \]
where the process ${\mathcal A}_t^m$ is given by the following relation:
\begin{align*}
&\mathcal{A}_t^m := \exp((d+m)\phi_M t) \sum_{|\beta| \le m} \exp(\sigma (d+|\beta|)W_t)\\
& \times\hspace{-0.05cm} \exp\hspace{-0.05cm}\left[\int_0^t \hspace{-0.05cm}\left\{\hspace{-0.05cm}\sum_{|\beta|\le m}\hspace{-0.1cm}\exp(\sigma(d+|\beta|)W_s) \hspace{-0.1cm}\right\}\hspace{-0.15cm}\left\{\hspace{-0.05cm}\sum_{|\beta| \le m}\hspace{-0.1cm}\exp(-\sigma(d+|\beta|)W_s)\hspace{-0.1cm}\right\} \hspace{-0.1cm}\left( \sum_{|\alpha|+|\beta|\le m}\hspace{-0.2cm}C_{\alpha,\beta}(s)\hspace{-0.05cm}\right)\hspace{-0.05cm}ds \right].
\end{align*}

\newpage

\end{document}